\numberwithin{equation}{section}
\newtheorem{theorem}{Theorem}[section]
\newtheorem{lemma}{Lemma}[section]
\newtheorem{corollary}{Corollary}[section]
\newtheorem{proposition}{Proposition}[section]
\theoremstyle{definition}
\newtheorem{definition}{Definition}[section]
\newtheorem{remark}{Remark}[section]
\newtheorem{example}{Example}[section]
\theoremstyle{remark}
\date{}
\begin{document}
\title{Schwarz lemma on polydiscs endowed with holomorphic invariant K\"ahler-Berwald metrics}
\author{Shuqing Lin (sqlinmath@126.com)\\
School of Mathematical Sciences, Xiamen
University\\ Xiamen 361005, China\\
Liling Sun (sunliling@yeah.net)\\
School of Sciences, Jimei University\\ Xiamen 361021, China\\
Chunping Zhong (zcp@xmu.edu.cn)\\
School of Mathematical Sciences, Xiamen
University\\ Xiamen 361005, China
}

\date{}

\maketitle
\begin{abstract}
 In this paper,
 we obtain a Schwarz lemma for holomorphic mappings from the unit polydisc $P_m$ into the unit polydisc $P_n$, here $P_m$ and $P_n$ are endowed with $\mbox{Aut}(P_m)$-invariant K\"ahelr-Berwald metric $F_{t,k}$ and $\mbox{Aut}(P_n)$-invariant K\"ahler-Berwald metric $\tilde{F}_{\tilde{t},\tilde{k}}$ respectively. Our result generalizes the Schwarz lemma for holomorphic mappings from $P_m$ into $P_n$ whenever $P_m$ and $P_n$ are endowed with the Bergman metrics respectively. We also obtain a distortion theorem on the unit polydisc $P_m$, where $P_m$ is endowd with an $\mbox{Aut}(P_m)$-invariant K\"ahler-Berwald metric $F_{t,k}$, and show that for each fixed $t\in[0,+\infty)$ and integer $k\geq 2$, $F_{t,k}$ is actually a K\"ahler Finsler-Einstein metric in the sense of T. Aikou.
\end{abstract}
\textbf{Keywords:} Schwarz lemma; K\"ahler-Berwald metric; Distortion theorem; Finsler-Einstein metric.\\
\textbf{MSC(2010):}32H02, 53C60, 53C56. \\
\section{Introduction and statement of main results}\label{se1}

Let $\triangle=\{z\in\mathbb{C}: |z|<1\}$ be the  unit disk in $\mathbb{C}$ endowed with the Poincar\'e metric
\begin{equation}
ds^2=\frac{dzd\overline{z}}{(1-|z|^2)^2},\quad z\in\triangle,\label{pm}
\end{equation}
and $\mbox{Aut}(\triangle)$ denotes the group of holomorphic automorphisms of $\triangle$.
The classical Schwarz lemma states that every holomorphic mapping $f$ from $\triangle$ into itself decreases the Poincar\'e metric, i.e. $
f^\ast ds^2\leq ds^2,$
where $f^\ast ds^2$ denotes the pull-back metric. An immediate consequence of the classical Schwarz lemma is that each $f\in\mbox{Aut}(\triangle)$ is an isometry of $ds^2$.
Let $f:\triangle\rightarrow N$ be any holomorphic mapping, where $N$ is a Riemann surface endowed with a conformal metric $d\sigma^2=\rho(w)|dw|^2$ such that its Gaussian curvature satisfies
 $K=-\frac{2}{\rho}\frac{\partial^2\log\rho}{\partial w\partial\overline{w}}\leq-4.$ In \cite{Ah}, L. V. Ahlfors proved that $f^\ast d\sigma^2\leq ds^2.$
Ahlfors' generalization of the classical Schwarz lemma uncovered the relationship between curvatures of the differential metrics and the decreasing property of the pull-back metric $f^\ast d\sigma^2$ under holomorphic mapping $f:\triangle\rightarrow N$. This opened the door of generalizing Schwarz lemma of interior type from the viewpoint of differential geometry. For the generalizations along this line we  refer to
K. H. Look \cite{Look1,Look2}, S. Kobayashi \cite{K-1}, S. S. Chern \cite{Chern}, S. T. Yau \cite{Yau}, Z. H. Chen, S. Y. Cheng and Q. K. Lu\cite{CCL}, H. L. Royden \cite{Royden}, H. C. Yang and Z. H. Chen \cite{YC}, V. Tosatti \cite{Tosatti}, K. T. Kim and H. J. Lee \cite{KL} and references therein. There are also lots of interest in investigating Schwarz lemma of boundary type, we refer to D. M. Burns and S. G. Krantz \cite{BK}, T. S. Liu, J. F. Wang and X. M. Tang \cite{LWT}, T. S. Liu and X. M. Tang \cite{LT}, Y. Liu, Z. H. Chen and Y. F. Pan \cite{LCP}, X. M. Tang, T. S. Liu and W. J. Zhang \cite{TLZ}, J. F. Wang, T. S. Liu and X. M. Tang\cite{WLT}.

Recently, the generalization of Schwarz lemma  to strongly pseudoconvex complex Finsler manifolds attracts some interest. In \cite{SS}, B. Shen and Y. B. Shen obtained a Schwarz lemma for holomorphic mappings between two compact strongly pseudoconvex complex Finsler manifolds.
 In \cite{Wan}, X. Y. Wan obtained a Schwarz lemma for holomorphic mappings from a complete Riemann surface endowed with a conformal metric into a complex manifold endowed with a strongly pseudoconvex complex Finsler metric. In \cite{NZ-1}, J. Nie and C. P. Zhong obtained a Schwarz lemma for holomorphic mappings from a complete K\"ahler manifold into a strongly pseudoconvex complex Finsler manifold with some curvatures assumptions, and then in \cite{NZ-2}, J. Nie and C. P. Zhong obtained a Schwarz lemma on a complex manifold which admits a  weakly K\"ahler-Finsler metric with some assumptions of the radial flag curvatures and holomorphic sectional curvatures.

In \cite{Zh1}, the third author of this paper proved that there exists no non-Hermitian quadratic $U(m)$-invariant K\"ahler-Finsler metric on any $U(m)$-invariant domain $D\subset \mathbb{C}^m$. An immediate consequence of this result implies that on the  unit ball $B_m$ in $\mathbb{C}^m$, every $\mbox{Aut}(P_m)$-invariant strongly pseudoconvex complex Finsler metric is a constant multiple of the Bergman metric on $B_m$. Very recently in \cite{Zh2}, C. P. Zhong proved that on the  unit polydisc $P_m$ in $\mathbb{C}^m (m\geq 2)$, there exist infinite many $\mbox{Aut}(P_m)$-invariant complex Finsler metrics
\begin{equation}
F_{t, k}(z; v)
=\frac{1}{\sqrt{1+t}} \sqrt{\sum_{l=1}^{m} \frac{|v^l|^2}{(1-|z^l|^2)^2}+t \sqrt[k]{\sum_{l=1}^{m} \frac{|v^l|^{2k}}{(1-|z^l|^2)^{2k}}}} \label{Ftk}
\end{equation}
for any $t\in [0, +\infty)$ and any integer $k\geq 2$, where $z=(z^1,\cdots,z^m)\in P_m\subset\mathbb{C}^m$ and $v=(v^1,\cdots,v^m)\in\mathbb{C}^m\cong T_z^{1,0}P_{m}.$
Moreover, for any fixed $t \in[0,+\infty)$ and any fixed integer $k\geq 2$, $F_{t,k}$ is a complete strongly convex K\"{a}hler-Berwald metric.

Note that when $t=0$, \eqref{Ftk} reduces to the usual Bergman metric
\begin{equation}
	F_0^2(z;v)=\frac{|v^1|^{2}}{\big(1-\left|z^{1}\right|^{2}\big)^{2}}+\cdots
	+\frac{\left|v^m\right|^{2}}{\big(1-\left|z^{m}\right|^{2}\big)^{2}},\quad \forall z\in P_m,v\in T_z^{1,0}P_m\label{F0}
\end{equation}
 on $P_m$, which is clear Hermitian quadratic with respect to the fiber coordinates $v=(v^1,\cdots,v^m)$. For any fixed $t\in (0,+\infty)$ and any fixed integer $k\geq 2$, however, \eqref{Ftk} is a non-Hermitian quadratic metric.

  Let $f:P_m\rightarrow P_m$ be a holomorphic mapping, we denote
 $$f^\ast F_0^2(z;v):=F_0^2(f(z);f_\ast(v)),\quad \forall z\in P_m, v\in T_z^{1,0}P_m,$$
 where $f_\ast$ denotes the differential of $f$ at the point $z$.
 The following theorem is due to K. H. Look \cite{Look1,Look2}.

 \begin{theorem}\label{th1} (\cite{Look1,Look2})~Let $P_{m}$ be the  unit polydisc in $\mathbb{C}^{m}(m\geq 2)$ endowed with the Bergman metric $F_0^2$.~Then for any holomorphic mapping $w=f(z)$ from $P_{m}$ into itself,
 	\begin{equation}
 		\big(f^\ast F_0^2\big)(z; v) \leq m F_0^2(z; v),\quad\forall z\in P_m,v\in T_z^{1,0}P_m.\label{sbm}
 	\end{equation}
 \end{theorem}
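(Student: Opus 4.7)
The plan is standard: invariance of $F_0^2$ under $\mbox{Aut}(P_m)$ reduces the estimate to a single base point, and at that base point the one-variable Schwarz lemma applied componentwise closes the argument.

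\textbf{Step 1 (Reduction to the origin).} Fix $z\in P_m$ and set $w:=f(z)$. Choose a diagonal M\"obius automorphism $\varphi\in\mbox{Aut}(P_m)$ with $\varphi(0)=z$ and $\psi\in\mbox{Aut}(P_m)$ with $\psi(w)=0$. Because $F_0^2$ is a product of one-variable Poincar\'e metrics, the classical Schwarz--Pick lemma in each factor shows that every element of $\mbox{Aut}(P_m)$ is an isometry of $F_0^2$. Therefore it suffices to establish the inequality for $g:=\psi\circ f\circ\varphi$, a holomorphic self-map of $P_m$ with $g(0)=0$, at the origin.

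\textbf{Step 2 (Componentwise Schwarz).} Write $g=(g^1,\dots,g^m)$ with $g^l:P_m\to\triangle$ and $g^l(0)=0$. For any $\zeta\in\mathbb{C}^m$ with $\max_j|\zeta^j|\le 1$, the one-variable slice $\lambda\mapsto g^l(\lambda\zeta)$ maps $\triangle$ into $\triangle$ and fixes the origin, so the classical Schwarz lemma yields $\bigl|\sum_j\partial_jg^l(0)\zeta^j\bigr|\le 1$. Optimising the phases of $\zeta^j$ on the unit polydisc (i.e.\ the $\ell^\infty$--$\ell^1$ duality on $\mathbb{C}^m$) gives
\[
\sum_{j=1}^m\bigl|\partial_jg^l(0)\bigr|\le 1,\qquad l=1,\dots,m.
\]

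\textbf{Step 3 (Conclusion).} For arbitrary $v\in T_0^{1,0}P_m$, the triangle inequality together with Step 2 gives
\[
\bigl|(g_*v)^l\bigr|=\Bigl|\sum_{j=1}^m\partial_jg^l(0)\,v^j\Bigr|\le\Bigl(\sum_{j}|\partial_jg^l(0)|\Bigr)\max_{j}|v^j|\le\max_{j}|v^j|.
\]
Squaring and summing over $l$, then using $\max_j|v^j|^2\le\sum_j|v^j|^2=F_0^2(0;v)$, one obtains
\[
F_0^2\bigl(g(0);g_*v\bigr)=\sum_{l=1}^m\bigl|(g_*v)^l\bigr|^2\le m\max_j|v^j|^2\le mF_0^2(0;v),
\]
and transporting back through the isometries $\varphi$ and $\psi$ produces $(f^*F_0^2)(z;v)\le mF_0^2(z;v)$.

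The analytic content is concentrated entirely in Step 2; the only mild obstacle is the bookkeeping in Step 1, i.e.\ checking that diagonal M\"obius automorphisms are indeed $F_0^2$-isometries and that composition with them preserves the pull-back inequality. The dimensional constant $m$ is sharp and enters only at the last estimate $\sum_{l=1}^m\max_j|v^j|^2\le m\sum_j|v^j|^2$, which is the sole place where the higher-dimensional geometry of the polydisc plays a role.
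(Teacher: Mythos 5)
Your proposal is correct and follows essentially the same route as the paper: reduction to the origin via the M\"obius automorphisms (using $\mbox{Aut}(P_m)$-invariance of the metric), then the one-variable Schwarz lemma on slices $\lambda\mapsto g^l(\lambda\zeta)$ combined with phase optimisation — which is exactly the content of the paper's Lemma \ref{l-2.1} and Lemma \ref{l-2.2} — and finally the $\max$-versus-sum estimate that produces the constant $m$, just as in the proof of Theorem \ref{th3} specialised to $t=\tilde{t}=0$ and $m=n$.
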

 \begin{remark}
 	It is easy to check that any $\mbox{Aut}(P_m)$-invariant Hermitian metric is a positive constant multiple of the Bergman metric on $P_m$. Furthermore, the constant $m$ in  \eqref{sbm} is optimal.
 \end{remark}

Now let $\tilde{z}=(\tilde{z}^1,\cdots,\tilde{z}^n)$  be the complex coordinates on $\mathbb{C}^n(n\geq 2)$ and  $P_n$ be the  unit polydisc in $\mathbb{C}^n$ endowed with the following $\mbox{Aut}(P_n)$-invariant K\"ahler-Berwald metric
\begin{equation}
\tilde{F}_{\tilde{t}, \tilde{k}}(\tilde{z}; \tilde{v})
=\frac{1}{\sqrt{1+\tilde{t}}} \sqrt{\sum_{l=1}^{n} \frac{|\tilde{v}^l|^2}{(1-|\tilde{z}^l|^2)^2}+\tilde{t} \sqrt[\tilde{k}]{\sum_{l=1}^{n} \frac{|\tilde{v}^l|^{2\tilde{k}}}{(1-|\tilde{z}^l|^2)^{2\tilde{k}}}}}, \quad \forall~\tilde{z}\in P_n,\tilde{v} \in T_{\tilde{z}}^{1,0}P_{n}.\label{tFtk}
\end{equation}

The purpose of this paper is first to establish a Schwarz lemma for holomorphic mapping $f:P_{m}\rightarrow P_{n}$, where $P_m$ is endowed with the $\mbox{Aut}(P_m)$-invariant K\"ahler-Berwald metric $F_{t, k}$ given by \eqref{Ftk} and $P_n$ is endowed with the $\mbox{Aut}(P_n)$-invariant K\"ahler-Berwald metric $\tilde{F}_{\tilde{t},\tilde{k}}$ given by \eqref{tFtk}, we then obtain a distortion theorem on $P_m$ whenever it is endowed with the $\mbox{Aut}(P_m)$-invariat metric $F_{t,k}$, and finally show that for any fixed $t\in[0,+\infty)$ and any fixed integer $k\geq 2$, $(P_m,F_{t,k})$ is a K\"ahler Finsler-Einstein manifold in the sense of T. Aikou \cite{Ai2}.

Notice that if $m=n=1$, then $F_{t,k}$ and $\tilde{F}_{\tilde{t},\tilde{k}}$ reduce to the well-known Poincar$\acute{\mbox{e}}$ metric on the  unit disc $\triangle$ in $\mathbb{C}$. In the following, we always assume that $m,n\geq 2$. For any $t, \tilde{t}\in(0,+\infty)$ with $t\neq \tilde{t}$ and $k\neq \tilde{k}$, there exists no holomorphic isometries between the complex Finsler manifolds $(P_m,F_{t,k})$ and $(P_n,\tilde{F}_{\tilde{t},\tilde{k}})$ even if $m=n\geq 2$.

The main results in this paper are  as follows.

\begin{theorem}\label{mth}
 Let $P_{m}$ be the  unit polydisc in $\mathbb{C}^m$ endowed with the $\mbox{Aut}(P_m)$-invariant K\"ahler-Berwald metric $F_{t, k}$, and $P_n$ be the  unit polydisc in $\mathbb{C}^n$ endowed with the $\mbox{Aut}(P_n)$-invariant K\"ahler-Berwald metric $\tilde{F}_{\tilde{t},\tilde{k}}$. Then for any holomorphic mapping $f:P_m\rightarrow P_n$, we have
\begin{equation}
\big(f^\ast \tilde{F}_{\tilde{t},\tilde{k}}^2\big)(z;v)\leq \frac{n+\tilde{t}\sqrt[\tilde{k}]{n}}{1+\tilde{t}}F_{t,k}^2(z;v),\quad \forall z\in P_m, v\in T_z^{1,0}P_m.\label{tii-1}
\end{equation}
The constant
$\frac{n+\tilde{t}\sqrt[\tilde{k}]{n}}{1+\tilde{t}}$
in \eqref{tii-1} is optimal for any fixed $t,\tilde{t}\in[0,+\infty)$ and any fixed integers $k,\tilde{k}\geq 2$.
\end{theorem}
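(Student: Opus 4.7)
My plan is to reduce \eqref{tii-1} to a chain of scalar inequalities in two families of non-negative numbers: $a_l := |v^l|^2/(1-|z^l|^2)^2$ for $l=1,\ldots,m$, and $b_j := |df^j(v)|^2/(1-|f^j(z)|^2)^2$ for $j=1,\ldots,n$, where $f=(f^1,\ldots,f^n)$. In these notations, $(1+t)F_{t,k}^2(z;v)=\sum_l a_l+t\bigl(\sum_l a_l^k\bigr)^{1/k}$ and $(1+\tilde t)\bigl(f^\ast\tilde F_{\tilde t,\tilde k}^2\bigr)(z;v)=\sum_j b_j+\tilde t\bigl(\sum_j b_j^{\tilde k}\bigr)^{1/\tilde k}$, so \eqref{tii-1} becomes a purely algebraic statement in the numbers $a_l$ and $b_j$ once the latter are controlled by the former.

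The central analytic step is a componentwise Schwarz--Pick inequality. For each $j$, I would post-compose $f^j:P_m\to\triangle$ with the M\"obius map of $\triangle$ sending $f^j(z)$ to $0$ and pre-compose with the $\mbox{Aut}(P_m)$-element sending $0$ to $z$; the result is a holomorphic $h:P_m\to\triangle$ with $h(0)=0$. Applying the one-variable Schwarz lemma to $\zeta\mapsto h\bigl(\zeta\xi/\max_l|\xi^l|\bigr)$ on the unit disc in $\zeta$ yields $|dh(0)(\xi)|\le\max_l|\xi^l|$ for every $\xi\in\mathbb{C}^m$. Choosing $\xi^l=v^l/(1-|z^l|^2)$ so that the differential of the $P_m$-automorphism carries $\xi$ to $v$, I obtain
\[
b_j \;=\; \frac{|df^j(v)|^2}{(1-|f^j(z)|^2)^2} \;\le\; \max_{1\le l\le m}\frac{|v^l|^2}{(1-|z^l|^2)^2} \;=:\; M.
\]

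With $b_j\le M$ in hand, two elementary $\ell^p$-type comparisons close the argument: $\sum_j b_j\le nM$ and $\bigl(\sum_j b_j^{\tilde k}\bigr)^{1/\tilde k}\le n^{1/\tilde k}M$ give $(1+\tilde t)\bigl(f^\ast\tilde F_{\tilde t,\tilde k}^2\bigr)(z;v)\le(n+\tilde t\sqrt[\tilde k]{n})M$, while the dual bounds $M\le\sum_l a_l$ and $M\le\bigl(\sum_l a_l^k\bigr)^{1/k}$ give $(1+t)M\le(1+t)F_{t,k}^2(z;v)$. Combining the two chains proves \eqref{tii-1}. For optimality, the test map is the diagonal $f(z^1,\ldots,z^m)=(z^1,\ldots,z^1)\in P_n$, evaluated at $z=0$ with $v=(1,0,\ldots,0)$: every intermediate inequality becomes an equality, so $F_{t,k}^2(0;v)=1$ while $\bigl(f^\ast\tilde F_{\tilde t,\tilde k}^2\bigr)(0;v)=(n+\tilde t\sqrt[\tilde k]{n})/(1+\tilde t)$. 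I expect the componentwise Schwarz--Pick bound to be the only genuinely non-trivial step, since everything downstream reduces to the standard inequalities $\|\cdot\|_\infty\le\|\cdot\|_{\tilde k}$ and $\|\cdot\|_\infty\le\|\cdot\|_1$.
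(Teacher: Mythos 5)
Your proof is correct, and it reaches the paper's estimate by a somewhat different route at the one genuinely nontrivial step. The paper conjugates $f$ by M\"obius automorphisms on \emph{both} ends, so that $g=\tilde h_{w_0}\circ f\circ h_{z_0}^{-1}$ fixes the origin, and then invokes Lemma \ref{l-2.1} (the linear part of $g$ maps $P_m$ into $P_n$) together with Lemma \ref{l-2.2} (a linear map does so iff $\sum_{i}|a_{il}|\le 1$ for each $l$); this yields $\bigl|\sum_i u^i a_{il}\bigr|\le\max_j|u^j|$ with $u=(h_{z_0})_\ast(v)$, which is precisely your bound $b_j\le M$ read off at the origin. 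You instead prove the componentwise Schwarz--Pick estimate $|df^j(v)|/(1-|f^j(z)|^2)\le\max_l |v^l|/(1-|z^l|^2)$ directly, by composing each $f^j:P_m\to\triangle$ with a disc automorphism on the target and the polydisc automorphism $h_z^{-1}$ on the source and applying the one-variable Schwarz lemma along $\zeta\mapsto h(\zeta\xi/\max_l|\xi^l|)$ (equivalently: each coordinate function contracts the Kobayashi metric of $P_m$, which at $(z,v)$ equals $\max_l|v^l|/(1-|z^l|^2)$). This avoids using the $\mbox{Aut}(P_n)$-invariance of $\tilde F_{\tilde t,\tilde k}$ and the extraction of the linear part, so it is slightly more self-contained; the paper's packaging has the advantage that Lemmas \ref{l-2.1} and \ref{l-2.2} are reused later (for instance in the equality analysis of Proposition \ref{prop1}). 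From $b_j\le M$ onward the two arguments coincide: the same $\ell^\infty$ versus $\ell^1$ and $\ell^{\tilde k}$ comparisons giving $(1+\tilde t)\,(f^\ast\tilde F^2_{\tilde t,\tilde k})(z;v)\le (n+\tilde t\,\sqrt[\tilde k]{n})\,M$, the same lower bound $F_{t,k}^2(z;v)\ge M$, and the same extremal map $f_0(z)=(z^1,\cdots,z^1)$ with $v=(1,0,\cdots,0)$ at the origin showing the constant is attained, hence optimal.
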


\begin{remark}
(1) For $t=0$ and  $\tilde{t}=0$, $F_{t,k}$ and $\tilde{F}_{\tilde{t},\tilde{k}}$ reduce to the usual Bergman metrics on  $P_m$ and $P_n$ respectively, which are complete K\"ahler metrics, hence Hermitian quadratic complex Finsler metrics. In this case, Theorem \ref{mth} reduces to Theorem \ref{th1} whenever $m=n$.

(2) For any fixed $t,\tilde{t}\in(0,+\infty)$ and any fixed integers $k,\tilde{k}\geq 2$,
$F_{t,k}$ and $\tilde{F}_{\tilde{t},\tilde{k}}$ are holomorphic invariant K\"ahler-Berwald metrics on $P_m$ and $P_n$ respectively, which are non-Hermitian quadratic complex Finsler metrics. Therefore, Theorem \ref{mth} is a natural generalization of Theorem \ref{th1} from Hermitian quadratic metrics to  complex Finsler metrics which are not necessary Hermitian quadratic.

(3) The constant
 $\frac{n+\tilde{t}\sqrt[\tilde{k}]{n}}{1+\tilde{t}}$
 in \eqref{tii-1} is optimal, which depends neither on the parameters $t$ and $k$ in $F_{t,k}$ nor the dimension $m$ of the source polydisc $P_m$, it depends only on the given parameters $\tilde{t}$ and $\tilde{k}$ in $\tilde{F}_{\tilde{t},\tilde{k}}$ and the dimension $n$ of the target polydisc $P_n.$ Moreover, we can view the constant $\frac{n+\tilde{t}\sqrt[\tilde{k}]{n}}{1+\tilde{t}}=\frac{L}{U}$ as the quotient of the lower bound $L=-4$ of the holomorphic sectional curvature of $F_{t,k}$ and the upper bound $U =\frac{-4(1+\tilde{t})}{n+\tilde{t}\sqrt[\tilde{k}]{n}}$ of the holomorphic sectional curvature of  $\tilde{F}_{\tilde{t},\tilde{k}}$ (see Theorem 1.6 in \cite{Zh2}).
\end{remark}

Notice that for every normalized biholomorphic convex mapping $f(z): P_m \rightarrow \mathbb{C}^{m},$ $f_l(z)=f_{l}\left(z^{l}\right)$ is a
  normalized biholomorphic convex function of one complex variable $z^{l}$ on the unit disc $\Delta_l=\Delta,l=1,\cdots,m.$ And we have
\begin{theorem}
	Let $P_m$ be the unit polydisk in $\mathbb{C}^m$ endowed with the $\mbox{Aut}(P_m)$-invariant K\"ahler-Berwald metric $F_{t,k}$ given by \eqref{Ftk}, and $f(z)=(f_1(z),\cdots,f_n(z)): P_m \rightarrow \mathbb{C}^{m}$ be a normalized biholomorphic convex mapping.	
	Then for every $z \in P_m$ and vector $v \in T_z^{1,0}P_m\cong\mathbb{C}^{m}$, the following inequalities
	\begin{equation}
		\left[\frac{1-p(z)}{1+p(z)}\right]^2F_{t,k}^{2}(z;v)
		\leq F_{t,k}^2\left(0; f_{\ast}(z) v\right)
		\leq \left[\frac{1+p(z)}{1-p(z)}\right]^2F_{t,k}^{2}(z; v)\label{1.7}
	\end{equation}
	hold, where $p(z)$ is the Minkowski functional of $P_m$.
	Moreover, one of the equalities holds at some point $z\neq0$ in \eqref{1.7} if and only if $f_l(z^l)=\frac{z^l}{1-e^{i\theta_l}z^l}$ for some $\theta_l\in \mathbb{R}, l=1,\cdots,m.$
\end{theorem}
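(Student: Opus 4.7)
The key structural observation, recalled in the remark just before the theorem, is that every normalized biholomorphic convex mapping $f:P_m\to\mathbb{C}^m$ splits coordinatewise as $f_l(z)=f_l(z^l)$ with each $f_l$ a normalized convex univalent function on the unit disc $\Delta$. Consequently $f_\ast(z)$ is a diagonal linear map sending $v=(v^1,\dots,v^m)$ to $(f_1'(z^1)v^1,\dots,f_m'(z^m)v^m)$, and the denominators $(1-|\cdot|^2)^2$ in \eqref{Ftk} collapse at the origin. The inequality \eqref{1.7} thus reduces to a coordinatewise comparison between $|f_l'(z^l)|^2$ and $1/(1-|z^l|^2)^2$, which in turn is controlled by a one-variable distortion estimate.

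The plan is to invoke the classical distortion theorem for normalized convex univalent functions $g:\Delta\to\mathbb{C}$: for every $w\in\Delta$,
$$\frac{1}{(1+|w|)^2}\leq |g'(w)|\leq \frac{1}{(1-|w|)^2},$$
with equality at some $w\neq 0$ only for the half-plane mappings $g(w)=w/(1-e^{i\theta}w)$. Using $(1-|w|^2)^2=(1-|w|)^2(1+|w|)^2$, this rewrites as
$$\left[\frac{1-|w|}{1+|w|}\right]^2\frac{1}{(1-|w|^2)^2}\leq |g'(w)|^2\leq \left[\frac{1+|w|}{1-|w|}\right]^2\frac{1}{(1-|w|^2)^2}.$$
Applying this to each $f_l$ at $w=z^l$ and invoking monotonicity of $x\mapsto (1\pm x)/(1\mp x)$ on $[0,1)$, together with $|z^l|\leq p(z)=\max_l|z^l|$, I replace the bracketed factors uniformly by their values at $p(z)$.

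To finish, I sum. The squared inequalities control the $\sum|v^l|^2/(1-|z^l|^2)^2$ part of $F_{t,k}^2$; raising to the $2k$-th power, summing over $l$ and extracting the $k$-th root give matching control of the $\sqrt[k]{\sum|v^l|^{2k}/(1-|z^l|^2)^{2k}}$ part, because the bracketed constants raised to the $2k$-th power and then to the $(1/k)$-th power revert to the second power. Forming the convex combination prescribed by \eqref{Ftk} then yields \eqref{1.7}. For the equality characterization I trace the chain backwards: any non-trivial equality in \eqref{1.7} forces sharpness both in the monotone replacement of $|z^l|$ by $p(z)$, so $|z^l|=p(z)$ whenever $v^l\neq 0$, and in each one-variable distortion bound, which by the classical rigidity pins down $f_l(z^l)=z^l/(1-e^{i\theta_l}z^l)$. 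The only delicate bookkeeping is that the $\ell^2$ and $\ell^{2k}$ contributions must attain their equalities at the same $z$; but since both are governed by the same one-variable estimate on $|f_l'(z^l)|$, their equality conditions automatically coincide, so no genuine obstacle arises.
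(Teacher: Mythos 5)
Your argument follows essentially the same route as the paper's proof: Suffridge's decomposition reduces everything to the diagonal action of $f_\ast(z)$, L\"owner's one--variable distortion bound $\frac{1}{(1+|z^l|)^2}\leq|f_l'(z^l)|\leq\frac{1}{(1-|z^l|)^2}$ is combined with the identity $(1-|z^l|^2)^2=(1-|z^l|)^2(1+|z^l|)^2$ and the monotone replacement of $|z^l|$ by $p(z)$, and the constants pass uniformly through both the quadratic term and the $\sqrt[k]{\;\cdot\;}$ term of $F_{t,k}^2$, exactly as in the paper. Your handling of the necessity part of the equality statement also matches the paper's (and you are, if anything, slightly more careful in noting that the coordinatewise constraints are only forced where $v^l\neq0$, a point the paper passes over).

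The one genuine omission is the sufficiency half of the ``if and only if'': you never show that when every $f_l(z^l)=\frac{z^l}{1-e^{i\theta_l}z^l}$ there actually exists a point $z\neq0$ at which one of the equalities in \eqref{1.7} is attained. This is not automatic, since for such $f$ equality fails at a generic point: one needs all the moduli $|z^l|$ equal (so that the replacement of $|z^l|$ by $p(z)$ is an equality) and the phases aligned with the $\theta_l$ so that the one--variable bound is saturated. The paper completes this direction by evaluating at $z_0=(be^{-i\theta_1},\cdots,be^{-i\theta_m})$, $b\in\mathbb{R}^{+}$, for the upper equality and at $\tilde z_0=(-ae^{-i\theta_1},\cdots,-ae^{-i\theta_m})$ for the lower one; adding this short verification would close the gap in your write-up.
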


The following theorem shows that the $\mbox{Aut}(P_m)$-invariant K\"ahler-Berwald metrics $F_{t,k}$ are K\"ahler Finsler-Einstein metrics on $P_m$ in the sense of T. Aikou \cite{Ai2}.
\begin{theorem}
	Let $P_{m}$ be the  unit polydisc in $\mathbb{C}^{m}$ endowed with the $\mbox{Aut}(P_m)$-invariant K\"ahler-Berwald metric $F_{t, k}$ given by \eqref{Ftk}.
	Then for any fixed $t\in[0,+\infty)$ and any fixed integer $k\geq 2$, $(P_m,F_{t,k})$ is a K\"ahler Finsler-Einstein manifold with factor $\varphi=-2$.
\end{theorem}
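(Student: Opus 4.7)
The plan is to identify the Chern-Finsler connection of the K\"ahler-Berwald metric $F_{t,k}$ on $P_m$ with the Chern connection of the Bergman metric $F_0^2$. Since the Bergman metric on $P_m$ is K\"ahler-Einstein with Ricci equal to $-2$ times its K\"ahler form, Aikou's Einstein condition with factor $\varphi = -2$ will then follow at once.

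By the $\mbox{Aut}(P_m)$-invariance of both $F_{t,k}$ and $F_0$ and the transitivity of the $\mbox{Aut}(P_m)$-action on $P_m$, it suffices to verify the connection identification at the origin $z=0$. I would introduce the fiberwise substitution $\xi^l = v^l/(1-|z^l|^2)$, so that $F_{t,k}^2(z,v) = \Phi(\xi)$ depends only on $\xi$; the chain rule then yields the factorization
\begin{equation*}
G_{i\bar j}(z,v) = \frac{H_{i\bar j}(\xi)}{(1-|z^i|^2)(1-|z^j|^2)}, \qquad H_{i\bar j}(\xi) = \frac{\partial^2 \Phi}{\partial \xi^i\, \partial \bar\xi^j}.
\end{equation*}
Since $\partial_{z^i}\xi^p$, $\partial_{z^i}\bar\xi^p$ and $\partial_{z^i}(1-|z^p|^2)^{-1}$ each carry a factor of $\bar z^i$, all three vanish at $z=0$; hence $\partial_{z^i}G_{j\bar l}(z,v)|_{z=0}=0$, which forces the Chern-Finsler coefficients $\Gamma^k_{ij}(0,v)=G^{k\bar l}\partial_{z^i}G_{j\bar l}|_{z=0}=0$. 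These match the Bergman Chern coefficients at the origin. Combining this with the Berwald property of $F_{t,k}$ from \cite{Zh2} (which makes $\Gamma^k_{ij}$ $v$-independent) and the invariance of both connections under $\mbox{Aut}(P_m)$, the Chern-Finsler connection of $F_{t,k}$ agrees with the Bergman Chern connection throughout $P_m$.

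Consequently the horizontal Ricci form is
\begin{equation*}
\mbox{Ric}_{i\bar j}(z) = -\partial_{\bar z^j}\partial_{z^i}\log\det\bigl(g^B\bigr)_{k\bar l}(z) = -\frac{2\,\delta_{ij}}{(1-|z^i|^2)^2} = -2\,\bigl(g^B\bigr)_{i\bar j}(z),
\end{equation*}
where $g^B$ denotes the Bergman metric tensor -- the Hermitian metric canonically associated with the K\"ahler-Berwald structure of $F_{t,k}$. By Aikou's criterion in \cite{Ai2}, this identifies $(P_m, F_{t,k})$ as a K\"ahler Finsler-Einstein manifold with factor $\varphi = -2$. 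The principal obstacle is the connection-identification step, since $F_{t,k}$ is non-Hermitian quadratic for $t>0$ while the Bergman metric is K\"ahler; this obstacle is overcome by the $\xi$-substitution, which reduces the vanishing of $\partial_{z^i}G_{j\bar l}|_{z=0}$ to inspection, together with the Berwald property of $F_{t,k}$ and the transitive $\mbox{Aut}(P_m)$-invariance of both metrics.
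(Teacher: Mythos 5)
Your argument is correct and rests on the same underlying mechanism as the paper's proof --- reducing the Einstein condition for $F_{t,k}$ to the curvature of the product Bergman metric and contracting --- but you obtain the key connection identification by a different route. The paper simply quotes from \cite{Zh2} the explicit horizontal Chern--Finsler coefficients of $F_{t,k}$, namely $\varGamma^{s}_{l;i}=\hat{\varGamma}^{l}_{l;l}$ when $s=l=i$ and $0$ otherwise, and then contracts the resulting diagonal curvature $\hat{R}^{l}_{ll\bar{l}}=-2(1-|z^l|^2)^{-2}$ against the Bergman metric to get $K^s_l=-2\delta^s_l$. You instead prove the identification: the substitution $\xi^l=v^l/(1-|z^l|^2)$ shows that every $z$-derivative of $G$ and of $G_{j\bar l}$ carries a factor $\bar z$, hence vanishes at the origin, and then the Berwald property (coefficients independent of $v$) makes the difference between the horizontal Chern--Finsler connection of $F_{t,k}$ and the Bergman Chern connection a tensor on $P_m$ that is $\mbox{Aut}(P_m)$-invariant and vanishes at $0$, hence vanishes identically by transitivity. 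This buys self-containedness --- you need only the Berwald property and invariance from \cite{Zh2}, not the coefficient formulas --- at the cost of a slightly longer argument. Two points should be made explicit to make it airtight: first, your formula $\varGamma^k_{ij}(0,v)=G^{k\bar l}\partial_{z^i}G_{j\bar l}|_{z=0}$ uses the ordinary $z$-derivative in place of the horizontal derivative, which is legitimate at $z=0$ only after noting that the nonlinear coefficients $\varGamma^k_{;i}=G^{\bar s k}\partial^2G/\partial\bar v^s\partial z^i$ also vanish there (by the same $\bar z$-factor observation); second, Aikou's condition \eqref{mc} used in the paper is the contraction $K^i_j=g^{\bar\beta\alpha}R^i_{j\alpha\bar\beta}$ over the base indices, whereas you compute the bundle-index trace $-\partial\bar\partial\log\det g^B$; these coincide here because the Bergman metric is K\"ahler and its curvature is diagonal, but performing the contraction as in \eqref{mc} (as the paper does) matches the stated definition directly.
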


The contents of the paper are as follows. In section 2, we give some basic notions on complex Finsler metric on domains in $\mathbb{C}^m$. In section 3, we give two lemmas which will be used in section 4 to generalize Schwarz lemma for holomorphic mappings between polydiscs endowed holomorphic invariant K\"ahler-Berwald metrics and derive some corollaries. In section 5, we obtain a distortion theorem on polydiscs endowed with holomorphic invariant K\"ahler-Berwald metrics $F_{t,k}.$ In section 6, we show that $F_{t,k}$ are K\"ahelr Finsler Einstein metrics on polydiscs.

\section{Complex Finsler metric}
Let $D$ be a domain in $\mathbb{C}^m$ and $T^{1,0}D$ be its holomorphic tangent bundle. Denote $z=(z^1,\cdots,z^m)$ the complex coordinates on $D$. A complex tangent vector $\pmb{v}$ in the fiber  $T_z^{1,0}D$ over $z\in D$ can be expressed as
$$\pmb{v}=v^i\frac{\partial}{\partial z^i}\Big|_{z},\quad v=(v^1,\cdots,v^m)\in\mathbb{C}^m,$$
 so that
$(z;v)=(z^1,\cdots,z^m;v^1,\cdots,v^m)$ can be used as global complex coordinates on $T^{1,0}D$. Thus $T^{1,0}D$ is isomorphic to the product bundle $D\times \mathbb{C}^m$, hence a trivial holomorphic vector bundle over $D$.

\begin{definition}\cite{AP}\label{d-cfm}
A (smooth) complex Finsler metric on $D$ is a continuous function $F:T^{1,0}D\rightarrow [0,+\infty)$ satisfying the following properties:

(1) Regularity: $G=F^2$ is $C^\infty$ on $T^{1,0}D\setminus\{\mbox{zero section}\}$;

(2) Positivity: $F(z;v)\geq 0$ for any $(z;v)\in T^{1,0}D$ and $F(z;v)=0$ if and only if $v=0$;

(3) Homogeneity: $F(z;\lambda v)=|\lambda|F(z;v)$ for any $(z;v)\in T^{1,0}D$ and $\lambda\in\mathbb{C}$.
\end{definition}

\begin{remark}
It is important to ask for the smoothness of $G$ only on $T^{1,0}D\setminus\{\mbox{zero section}\}$ since $G$ is smooth (in fact $C^2$ is enough) on the whole $T^{1,0}D$ if and only if $F$ is a Hermitian metric (cf. Lemma 2.3.1 in \cite{AP}).
\end{remark}

Setting $z^i=x^i+\sqrt{-1}x^{m+i}$ and
$v^i=u^i+\sqrt{-1}u^{m+i}$ for $i=1, \cdots, m.$ Then $(x;u)=(x^1, \cdots, x^{2m}; u^1, \cdots, u^{2m})$ can be used as real coordinates on the real tangent bundle $TD$ of $D$ (considered as an open set in $\mathbb{R}^{2m}$). Note that $T^{1,0}D\cong TD$.

\begin{definition}
A complex Finsler metric $F$ is called strongly pseudoconvex if the $m$-by-$m$ Levi matrix $(G_{i\overline{j}})$ is positive definite on $T^{1,0}D\setminus\{\mbox{zero section}\}$, here
\begin{equation}
G_{i\overline{j}}:=\frac{\partial^2 G}{\partial v^i\partial\overline{v^j}},\quad i, j=1, \cdots, m.\label{spc}
\end{equation}
 If in particular, a strongly pseudoconvex complex Finsler metric $F$ is independent of $z\in D$ for any $v\in T_{z}^{1,0}D\cong \mathbb{C}^m$, then $F$ is called a complex Minkowski metric on $D$.

A complex Finsler metric $F$ is called strongly convex if the real $2m$-by-$2m$ Hessian matrix
$(G_{ab})$ is positive definite on $TD\setminus\{\mbox{zero section}\}$, here
$$
G_{ab}:=\frac{\partial^2G}{\partial u^a\partial u^b},\quad a, b=1, \cdots, 2m.
$$
\end{definition}
Note that $F$ is a strongly pseudoconvex (resp. strongly convex) complex Finsler metric on $D$ is equivalent to requiring that all the $F$-indicatries
$$
I_F(z)=\left\{v\in T_z^{1,0}D|\,F(z;v)<1\right\},\quad z\in D
$$
are strongly pseudoconvex (resp. strongly convex).

\begin{example} Let $D$ be a domain in $\mathbb{C}^m$ and
$ds^2=g_{i\overline{j}}(z)dz^id\overline{z^j}$
be a Hermitian metric (which is always assumed to be $C^\infty$) on $D$ . Then
$$
F(z;v)=\sqrt{g_{i\overline{j}}(z)v^i\overline{v^j}}
$$
is a strongly pseudoconvex complex Finsler metric, which is called Hermitian quadratic.
\end{example}
\begin{example}\cite{Zh2}
Let $D$ be a domain in $\mathbb{C}^m$. For any fixed $t\in(0,+\infty)$ and any fixed integer $k\geq 2$, define
$$
F_{t,k}(z;v)=\sqrt{|v^1|^2+\cdots+|v^m|^2+t\sqrt[k]{|v^1|^{2k}+\cdots+|v^m|^{2k}}},\quad \forall v\in T_z^{1,0}D.
$$
Then $F_{t,k}$ is a strongly convex complex Minkowski metric, which is not Hermitian quadratic.
\end{example}

Associated to a strongly pseudoconvex complex Finsler metric $F:T^{1,0}D\rightarrow [0,+\infty)$, the Chern-Finsler nonlinear connection coefficients are given by \cite{AP}
$$\varGamma_{;l}^i=G^{\overline{s}i}\frac{\partial^2G}{\partial \overline{v^s}\partial z^l},$$
where $(G^{\overline{s}i})$ is the inverse matrix of $(G_{i\overline{s}})$,
and the complex Berwald nonlinear connection coefficients are given by
$$
\mathbb{G}_{\;l}^i=\frac{\partial}{\partial v^l}\left(\frac{1}{2}\varGamma_{;l}^i v^l\right).
$$
The horizontal Chern-Finsler connection coefficients are given by
$$
\varGamma_{j;l}^i=\frac{\partial \varGamma_{;l}^i}{\partial v^j}
$$
and the complex Berwald connection coefficients are given by
$$
\mathbb{G}_{jl}^i=\frac{\partial \mathbb{G}_{\;l}^i}{\partial v^j}.
$$
A strongly pseudoconvex complex Finsler metric $F:T^{1,0}D\rightarrow [0,+\infty)$ is called a K\"ahler-Finsler metric in \cite{AP} and \cite{CS} if
$$
\varGamma_{j;l}^i=\varGamma_{l;j}^i;
$$
called a complex Berwald metric if the horizontal Chern-Finsler connection coefficients $\varGamma_{j;l}^i$ are independent of $v=(v^1,\cdots,v^m)$ (\cite{A-1}); called a weakly complex Berwald metric if the complex Berwald connection coefficients $\mathbb{G}_{jl}^i$ are independent of $v=(v^1,\cdots,v^m)$ (\cite{Zh0}); called a K\"ahler-Berwald metric if $F$ is both a K\"ahler-Finsler metric and a complex Berwald metric.
It was shown in \cite{Zh2} that the metric defined by \eqref{Ftk} is an $\mbox{Aut}(P_m)$-invariant strongly convex K\"ahler-Berwald metric on $P_m$, which reduces to the usual Bergman metric on $P_m$ whenever $t=0$, and it is non-Hermitian quadratic for any fixed $t\in (0,+\infty)$ and any fixed integer $k\geq 2$.

\section{Some lemmas} \label{se2}

The following Lemma \ref{l-2.1} and Lemma \ref{l-2.2} actually generalize  Lemma 4 and Lemma 5 in K. H. Look \cite{Look2} respectively in the sense that $P_m$ and $P_n$ need not have the same dimensions.
The proofs go essentially the same lines as that of Lemma 4 and Lemma 5 in \cite{Look2}.  We enclose here the proofs for completeness.
\begin{lemma}\label{l-2.1}
Let $w=f(z)$ be an holomorphic mapping carrying $P_{m}$ into $P_{n}$ and carrying the origin $0\in\mathbb{C}^m$ to the origin $0 \in\mathbb{C}^n$. Let the developments of the mapping functions $f_{l}(z)(l=1,\cdots,n)$ be
\begin{eqnarray*}
w^{1}&=&f_{1}(z)=a_{11} z^{1}+\cdots+a_{m 1} z^{m}+\mbox{terms of higher power},\\
&\vdots&\\
w^{n}&=&f_{n}(z)=a_{1 n} z^{1}+\cdots+a_{m n} z^{m}+\mbox{terms of higher power}.
\end{eqnarray*}
Then the linear terms of this transformation
\begin{eqnarray*}
	w^{1}&=&a_{11} z^{1}+\cdots+a_{m 1} z^{m}, \\
	   &\vdots&\\
	w^{n}&=&a_{1 n} z^{1}+\cdots+a_{mn} z^{m}
\end{eqnarray*}
form a holomorphic mapping from $P_{m}$ into $P_n$.
\end{lemma}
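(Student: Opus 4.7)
The plan is to reduce the multivariable statement to the classical one-variable Schwarz lemma by restricting $f$ to complex lines through the origin. Fix an index $l\in\{1,\ldots,n\}$ and a point $z=(z^{1},\ldots,z^{m})\in P_{m}\setminus\{0\}$, and consider the scalar holomorphic function
\[
F_{l}(\zeta):=f_{l}(\zeta z).
\]

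First I would identify the natural domain of $F_{l}$. The point $\zeta z$ lies in $P_{m}$ exactly when $|\zeta|\,|z^{j}|<1$ for every $j$, so $F_{l}$ is holomorphic on the disk $\{\zeta\in\mathbb{C}:|\zeta|<\rho\}$, where $\rho:=1/\max_{j}|z^{j}|$; since $z\in P_{m}$, we have $\rho>1$. On this disk $F_{l}$ takes values in $\triangle$ (because $f$ maps $P_{m}$ into $P_{n}$) and $F_{l}(0)=0$ (because $f(0)=0$). Applying Schwarz's lemma to the rescaled function $\zeta\mapsto F_{l}(\rho\zeta)$, which sends $\triangle$ into $\triangle$ and fixes the origin, one obtains $\rho\,|F_{l}'(0)|\leq 1$.

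Next, differentiating the Taylor expansion of $f_{l}$ termwise at $\zeta=0$ gives
\[
F_{l}'(0)=a_{1l}z^{1}+\cdots+a_{ml}z^{m}=:L_{l}(z),
\]
so the estimate becomes $|L_{l}(z)|\leq 1/\rho=\max_{j}|z^{j}|<1$. Since $l$ was arbitrary, the linear map $L=(L_{1},\ldots,L_{n})$ sends every nonzero $z\in P_{m}$ into $P_{n}$, and the case $z=0$ is trivial. The only delicate point is securing the \emph{strict} inequality $|L_{l}(z)|<1$ rather than the weak one produced by a naive application of Schwarz; this is handled by exploiting the strict inequality $\rho>1$, which encodes the openness of $P_{m}$. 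Beyond that, the argument is a direct slicing reduction to the classical one-variable Schwarz lemma, so I do not anticipate any serious obstacle.
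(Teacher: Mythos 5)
Your proof is correct and follows essentially the same slicing argument as the paper: restrict $f_l$ to the complex line $\zeta\mapsto\zeta z$ through the origin and apply the classical one-variable Schwarz lemma, with $F_l'(0)=a_{1l}z^1+\cdots+a_{ml}z^m$. The only (harmless) difference is how strictness is obtained: you rescale to the full disk of radius $\rho=1/\max_j|z^j|>1$ to get $|L_l(z)|\leq 1/\rho<1$ directly, whereas the paper applies Schwarz on the unit disk and then rules out equality via the rigidity case ($f_l(\lambda z)=e^{i\theta}\lambda$ would force $|f_l(z)|=1$, contradicting $f(P_m)\subset P_n$).
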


\begin{proof}
Let $\lambda\in\mathbb{C}$ and $|\lambda| \leq 1$.~We take an arbitrarily fixed interior point $z$ of $P_m$, then we have
$$
\left|\lambda z^{1}\right|<1, \cdots, \left|\lambda z^{m}\right|<1.
$$
We consider the functions
$$
f_{l}(\lambda z), \quad l=1, \cdots, n,
$$
which are holomorphic functions of $\lambda$ in the closed disk $\overline{\triangle}$.~By Schwarz lemma of one complex variable, for each $f_{l}(\lambda z)$ we have
\begin{equation}
\left|\frac{d f_{l}(\lambda z)}{d \lambda}\right|_{\lambda=0} \leq 1, \quad l=1, \cdots, n.
\label{lam3.1}
\end{equation}
However, it can be easily verified that
$$
\left|\frac{d f_{l}(\lambda z)}{d \lambda}\right|_{\lambda=0}<1, \quad l=1, \cdots, n.
$$
Indeed, if the equality holds in \eqref{lam3.1}, then by the Schwarz lemma of one complex variable we have $f_{l}(\lambda{z})=e^{i \theta} \lambda$ and $\left|f_{l}(z)\right|=1$ when we set $\lambda=1$.~But by our hypothesis, $z$ is an interior point of $P_{m}$ and the mapping is an inner holomorphic mapping.~We have $\left|f_{l}(\lambda)\right|<1$ whenever $z \in P_{m}$.~These two results are contradictory to each other.~Hence we have
$$
\left|\frac{d f_{l}(\lambda z)}{d \lambda}\right|_{\lambda=0}<1, \quad l=1, \cdots, n,
$$
i.e.,
$$
\left|a_{1l} z^{1}+\cdots+a_{m l} z^{m}\right|<1, \quad l=1, \cdots, n.
$$
This shows that the linear transformation
$$
w^{l}=a_{1l} z^{1}+\cdots+a_{ml} z^{m}, \quad l=1, \cdots, n
$$
carries $P_{m}$ into $P_n$. This completes the proof.
\end{proof}

\begin{lemma}\label{l-2.2}
The necessary and sufficient condition of the linear transformation
\begin{equation}
w^{l}=a_{1l} z^{1}+\cdots+a_{ml} z^{m}, \quad l=1, \cdots, n \label{2.1}
\end{equation}
carries $P_{m}$ into $P_n$, is that
\begin{equation}
|a_{1l}|+\cdots+|a_{ml}|\leq1, \quad l=1, \cdots, n. \label{2.2}
\end{equation}
\end{lemma}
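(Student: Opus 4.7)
The plan is to prove the two implications separately, with sufficiency being a direct triangle inequality and necessity requiring a carefully chosen test point that makes the triangle inequality tight.

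For the sufficiency direction, I would take an arbitrary interior point $z = (z^1,\ldots,z^m) \in P_m$, so $|z^i| < 1$ for every $i$, and apply the ordinary triangle inequality:
\begin{equation*}
|w^l| = \bigl|a_{1l}z^1 + \cdots + a_{ml}z^m\bigr| \leq \sum_{i=1}^{m}|a_{il}|\,|z^i| < \sum_{i=1}^{m}|a_{il}| \leq 1,
\end{equation*}
where the strict middle inequality holds because $|z^i|<1$ (with the trivial case when all $a_{il}$ vanish handled directly). This gives $w = (w^1,\ldots,w^n) \in P_n$, so \eqref{2.2} implies that \eqref{2.1} maps $P_m$ into $P_n$.

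For the necessity direction, I would fix an index $l \in \{1,\ldots,n\}$ and construct a one-parameter family of test points at which the triangle inequality becomes an equality. For $r \in (0,1)$, set
\begin{equation*}
z^{i}(r) = \begin{cases} r\,\dfrac{\overline{a_{il}}}{|a_{il}|}, & a_{il}\neq 0, \\ 0, & a_{il}=0, \end{cases}
\end{equation*}
so that each $|z^i(r)| \leq r < 1$, i.e., $z(r) \in P_m$. By choice of phase, the summands $a_{il}z^i(r) = r|a_{il}|$ are nonnegative real numbers, hence
\begin{equation*}
|w^l| = \sum_{i=1}^{m} a_{il}z^i(r) = r\sum_{i=1}^{m}|a_{il}|.
\end{equation*}
Since \eqref{2.1} sends $P_m$ into $P_n$, we must have $|w^l| < 1$, which gives $r\sum_{i=1}^{m}|a_{il}| < 1$ for all $r \in (0,1)$. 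Letting $r \to 1^{-}$ yields $\sum_{i=1}^{m}|a_{il}| \leq 1$, and since $l$ was arbitrary, \eqref{2.2} follows.

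The argument is almost entirely routine; no genuine obstacle is expected. The only mildly subtle point is making sure the test point in the necessity step actually lies in the open polydisc $P_m$ and handling the strict-versus-weak inequality when passing to the limit, which is why one introduces the parameter $r<1$ and lets $r\to 1^{-}$ rather than trying to plug in a boundary point directly.
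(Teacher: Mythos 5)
Your proposal is correct and follows essentially the same route as the paper: sufficiency by the triangle inequality, and necessity by choosing test points whose phases align with $\overline{a_{il}}$ so the triangle inequality becomes an equality. The only (inessential) difference is that you stay inside the open polydisc with a scaling factor $r$ and let $r\to 1^{-}$, whereas the paper extends the bound $|a_{1l}z^1+\cdots+a_{ml}z^m|\leq 1$ to the closure $\overline{P_m}$ and evaluates directly at the boundary point $z^i=e^{-\sqrt{-1}\theta_{il}}$.
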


\begin{proof}
The sufficiency is obvious, since for any point $z\in P_{m}$
\begin{eqnarray*}
|w^{l}|
&\leq&\left|a_{1 l}\right| \cdot\left|z^{1}\right|+\cdots+\left|a_{m l}\right| \cdot\left|z^{m}\right| \\
&<&\left|a_{1l}\right|+\cdots+\left|a_{ml}\right| \leq 1, \quad l=1, \cdots, n.
\end{eqnarray*}

For the proof of the necessity, we denote by $\overline{P_m}$ the closure of $P_{m}$. Since the linear mapping \eqref{2.1} carries $P_{m}$ into $P_{n}$, for any point $z \in \overline{P_m}$ we have
\begin{equation}
\left|a_{1l} z^{1}+\cdots+a_{m l} z^{m}\right| \leq 1, \quad l=1, \cdots, n. \label{2.3}
\end{equation}

Let $a_{i l}=\left|a_{i l}\right| e^{\sqrt{-1} \theta_{il}}$ for $i=1, \cdots, m$ and $l=1, \cdots, n$, where $\theta_{il}$ are real numbers. When we set
$z^{1}=e^{-\sqrt{-1} \theta_{1 l}}, \cdots, z^{m}=e^{-\sqrt{-1} \theta_{m l}}$ into \eqref{2.3}, we obtain
$$
\left|a_{1 l}\right|+\cdots+\left|a_{m l}\right| \leq 1.
$$
This completes the proof.
\end{proof}
\section{Schwarz lemma on polydiscs endowed with K\"ahler-Berwald metrics} \label{se3}

In this section, we shall give another proof of the following theorem which was first proved in  \cite{Zh2}, and then establish a Schwarz lemma for holomorphic mappings from $P_m$ into $P_n$, where $P_m$ and $P_n$ endowed with $\mbox{Aut}(P_m)$-invariant K\"ahler-Berwald metric $F_{t,k}$ and $\mbox{Aut}(P_n)$-invariant K\"ahler-Berwald metric $\tilde{F}_{\tilde{t},\tilde{k}}$, respectively.
\begin{theorem}\label{Z}
Let $P_m$ be the  unit polydisc in $\mathbb{C}^m.$ Then
\begin{equation}
F_{t,k}(z;v)=\frac{1}{\sqrt{1+t}}\sqrt{\sum_{l=1}^m\frac{|v^l|^2}{(1-|z^l|^2)^2}+t\sqrt[k]{\sum_{l=1}^m\frac{|v^l|^{2k}}{(1-|z^l|^2)^{2k}}}},\quad z\in P_m,v\in T_z^{1,0}P_m\label{ftk}
\end{equation}
is an $\mbox{Aut}(P_m)$-invariant strongly convex K\"ahler-Berwald metric for any fixed $t\in[0,+\infty)$ and any fixed integer $k\geq 2$.
\end{theorem}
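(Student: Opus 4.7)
The statement combines three properties of $F_{t,k}$: (i) $\mbox{Aut}(P_m)$-invariance, (ii) strong convexity, and (iii) the K\"ahler-Berwald property. My plan is to verify (i) by direct computation, then exploit the transitivity of the $\mbox{Aut}(P_m)$-action on $P_m$ to reduce (ii) and (iii) to single-point checks at $z=0$, where the structure of $F_{t,k}$ degenerates dramatically.

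For (i), recall that $\mbox{Aut}(P_m)$ is generated by coordinate permutations together with diagonal actions $\Phi(z)=(\phi_1(z^1),\ldots,\phi_m(z^m))$ with $\phi_l\in\mbox{Aut}(\triangle)$. The induced pushforward is $\Phi_\ast(v)^l = \phi_l'(z^l)v^l$, and the classical identity $|\phi_l'(z^l)|(1-|z^l|^2)=1-|\phi_l(z^l)|^2$ shows that each Poincar\'e ratio $\alpha_l := |v^l|^2/(1-|z^l|^2)^2$ is individually invariant. Since $G = F_{t,k}^2 = (1+t)^{-1}\bigl[\sum_l\alpha_l + t(\sum_l\alpha_l^k)^{1/k}\bigr]$ is a symmetric function of the $\alpha_l$'s, it is $\mbox{Aut}(P_m)$-invariant.

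For (ii), given $p\in P_m$, pick $\Phi\in\mbox{Aut}(P_m)$ with $\Phi(p)=0$; by invariance $G(p;v)=G(0;d\Phi_p v)$, so the $\mathbb{C}$-linear isomorphism $v\mapsto d\Phi_p v$ conjugates the real $v$-Hessian at $0$ to the one at $p$ and preserves positive-definiteness. It therefore suffices to verify strong convexity at $z=0$, where
$$G(0;v) = \frac{1}{1+t}\left[\sum_{l=1}^m |v^l|^2 + t\Bigl(\sum_{l=1}^m |v^l|^{2k}\Bigr)^{1/k}\right].$$
The first summand has real Hessian $\tfrac{2}{1+t}I$, strictly positive. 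The second is $\tfrac{t}{1+t}$ times the square of the norm $N(v):=(\sum_l|v^l|^{2k})^{1/(2k)}$ on $\mathbb{C}^m$, and $N^2$ is $C^\infty$ off the origin for $k\geq 2$ (since $|v^l|^{2k}$ is a polynomial in the real coordinates of $v^l$) and convex (as the square of a norm), so its real Hessian is positive semi-definite. Their sum is therefore positive definite on $\mathbb{C}^m\setminus\{0\}$.

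For (iii), the crux is the vanishing identity
$$\varGamma_{;l}^i(0,v) \equiv 0, \qquad \varGamma_{j;l}^i(0,v) \equiv 0 \qquad \text{for all } v,$$
which follows because every application of $\partial/\partial z^l$ to the factors $(1-|z^l|^2)^{-2}$ and $(1-|z^l|^2)^{-2k}$ in $G$ produces a factor of $\bar{z}^l$, so $\partial^2 G/\partial\bar{v}^s\partial z^l$ retains that factor and vanishes at $z=0$. To transfer this vanishing to a general $p\in P_m$, choose $\Phi\in\mbox{Aut}(P_m)$ with $\Phi(p)=0$. Invariance of $F_{t,k}$ under $\Phi$ implies invariance of its Chern-Finsler connection, and the standard coordinate-change formula for the connection coefficients, combined with the vanishing at $\tilde{z}=\Phi(p)=0$, yields
$$\varGamma_{j;l}^i(p,v) = \bigl((d\Phi(p))^{-1}\bigr)^i{}_a\,\frac{\partial^2\Phi^a}{\partial z^j\,\partial z^l}(p),$$
which is manifestly independent of $v$ (the Berwald condition) and symmetric in $(j,l)$ (the K\"ahler condition), so $F_{t,k}$ is K\"ahler-Berwald. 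As a bonus, taking $\Phi$ to be the product of M\"obius disc automorphisms sending each $p^l$ to $0$ gives the explicit formula $\varGamma_{j;l}^i(p,v)=\delta^i_j\,\delta^l_i\cdot 2\bar{p}^l/(1-|p^l|^2)$, coinciding with the K\"ahler connection of the Bergman product metric for every choice of $t$ and $k$.

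The main obstacle is justifying, in the complex Finsler setting, that the Chern-Finsler connection coefficients obey the ordinary affine-connection transformation law under biholomorphic coordinate changes, without spurious $v$-dependent corrections beyond those inherited from the transformation $\tilde{v}=d\Phi\cdot v$. Once this is granted, the vanishing identity at $z=0$ forces both the Berwald and K\"ahler-Finsler identities everywhere on $P_m$.
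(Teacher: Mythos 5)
Your proposal is correct, but it takes a genuinely more self-contained route than the paper. The paper's proof of Theorem \ref{Z} only establishes the $\mbox{Aut}(P_m)$-invariance: it assigns the complex norm $\varphi_{t,k}$ on $T_0^{1,0}P_m$, checks invariance under the isotropy group $\mbox{Iso}(P_m)$ (rotations and permutations of the coordinates), transports it to an arbitrary point by the M\"obius maps $h_{z_0}$, and then quotes Theorem 1.1 of \cite{Zh2} for the strong convexity and the K\"ahler--Berwald property. Your invariance argument is the same mechanism in different packaging (you verify invariance directly on the generators of $\mbox{Aut}(P_m)$ via $|\phi_l'(z^l)|(1-|z^l|^2)=1-|\phi_l(z^l)|^2$ and symmetry in the Poincar\'e ratios $\alpha_l$), but for the other two properties you replace the citation by direct proofs: strong convexity by reduction to $z=0$, where $G(0;v)$ is the Euclidean term plus $\tfrac{t}{1+t}$ times the square of the norm $N(v)=(\sum_l|v^l|^{2k})^{1/(2k)}$, which is convex and smooth off the origin; and the K\"ahler--Berwald property by the vanishing $\varGamma^i_{;l}(0,\cdot)=0$ together with the behaviour of the Chern--Finsler coefficients under the isometry $\Phi$. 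What your route buys is independence from \cite{Zh2} and an explicit identification of $\varGamma^i_{j;l}$ with the Hermitian connection of the product Bergman metric (consistent with what the paper uses later in Section 6); what the paper's citation buys is brevity, since \cite{Zh2} also carries the completeness and curvature information used elsewhere.

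Two small points keep your sketch from being complete as written. First, the step you flag as the ``main obstacle'' is in fact routine: writing $G(z;v)=\tilde G(\Phi(z);d\Phi(z)v)$ and differentiating, one gets
\begin{equation*}
\varGamma^i_{;l}(z;v)=\bigl((d\Phi)^{-1}\bigr)^i_{\ a}\Bigl[\tilde\varGamma^a_{;c}\bigl(\Phi(z);d\Phi(z)v\bigr)\frac{\partial\Phi^c}{\partial z^l}+\frac{\partial^2\Phi^a}{\partial z^l\partial z^j}\,v^j\Bigr],
\end{equation*}
directly from the definition $\varGamma^i_{;l}=G^{\bar s i}\partial^2G/\partial\bar v^s\partial z^l$ and the transformation $G_{i\bar s}=\tilde G_{a\bar b}\,\partial_i\Phi^a\,\overline{\partial_s\Phi^b}$ (this is also standard in \cite{AP}); with $\tilde\varGamma^a_{;c}(0;\cdot)=0$ it yields exactly your $v$-independent, symmetric formula for $\varGamma^i_{j;l}(p)$. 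Second, you should note that the Levi matrix $(G_{i\bar j})$ is invertible before writing $G^{\bar s i}$; this follows from your strong convexity, since positivity of the real Hessian in $v$ implies positivity of the Levi form. With these remarks inserted, your argument is complete.
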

\begin{remark}
The factor $\frac{1}{\sqrt{1+t}}$ is put here so that $F_{t,k}$ reduces to the well-known Poincar$\acute{\mbox{e}}$ metric on the  unit disk $\triangle$ in $\mathbb{C}$ when $m=1$. For $m\geq 2$ and $t=0$, $F_{t,k}$ reduces to the Bergman metric on the polydisc $P_m$ which is an $\mbox{Aut}(P_m)$-invairant K\"ahler metric, hence Hermitian quadratic. For any $m\geq 2$, fixed $t\in(0,+\infty)$, and fixed $k\in\mathbb{N},k\geq 2$, however, $F_{t,k}$ is an $\mbox{Aut}(P_m)$-invariant complete K\"ahler-Berwald metric which is non-Hermitian quadratic.
\end{remark}
\begin{proof}The proof given here essentially uses the transitive action of $\mbox{Aut}(P_m)$ on $P_m$.

That \eqref{ftk} is a strongly convex K\"ahler-Berwald metric as a direct consequence of Theorem 1.1 in \cite{Zh2}. To prove that \eqref{ftk} is $\mbox{Aut}(P_m)$-invariant. It suffices to show that \eqref{ftk} can be recovered by assigning a strongly convex complex norm $\varphi_{t,k}(v)$ on the tangent space $T_0^{1,0}P_m\cong\mathbb{C}^m$ at the origin satisfying
\begin{equation}
 \varphi_{t,k}\big(g_\ast(v)\big)=\varphi_{t,k}(v),\quad\forall g\in\mbox{Iso}(P_m),\label{ivn}
\end{equation}
where $\mbox{Iso}(P_m)$ is the isotropy subgroup at the origin and $g_\ast$ denotes the differential of $g$ at the origin.

Indeed, if we define
\begin{eqnarray}
\varphi_{t,k}(v):=\frac{1}{\sqrt{1+t}}\sqrt{\sum_{l=1}^m|v^l|^2+t\sqrt[k]{\sum_{l=1}^m|v^l|^{2k}}},\quad v\in T_0^{1,0}P_m,\label{nftk}
\end{eqnarray}
then by Theorem 1.1 in \cite{Zh2}, $\varphi_{t,k}(v)$ is a strongly convex complex norm on $T_0^{1,0}P_m$.

By Proposition 3 of Chapter 5 in \cite{Na}, the automorphism group $\mbox{Aut}(P_m)$ consists holomorphic functions of the following form:
\begin{equation}
f_{z_0}(z)
=\left(e^{i\theta_{1}}\frac{z^{\sigma(1)}-z_0^{1}}{1-\overline{z_0}^{1}z^{\sigma(1)}},
\cdots,
e^{i\theta_{m}}\frac{z^{\sigma(m)}-z_0^{m}}{1-\overline{z_0}^{m}z^{\sigma(m)}}\right),\quad z\in P_m,
\end{equation}
where $z_0=(z_0^1,\cdots,z_0^m)\in P_m$, $\theta_1,\cdots,\theta_m\in\mathbb{R}$ and $\sigma:\{1,\cdots,m\}\rightarrow\{1,\cdots,m\}$ is a permutation.
The isotropy subgroup $\mbox{Iso}(P_m)$  at the origin is given by
$$\mbox{Iso}(P_m)=\left\{g(z)=\left(e^{i\theta_1}z^{\sigma(1)},\cdots, e^{i\theta_m}z^{\sigma(m)}\right)\Big|\theta_1,\cdots,\theta_m\in\mathbb{R}\right\},$$
which together with  \eqref{nftk} immediately implies \eqref{ivn}.

Next we show that
\begin{equation}
F_{t,k}(z_0;v)=\varphi_{t,k}((h_{z_0})_\ast(v)),\label{hass}
\end{equation}
where
\begin{equation}
h_{z_0}(z)=\left(\frac{z^1-z_0^{1}}{1-\overline{z_0^1}z^1},
\cdots,\frac{z^m-z_0^{m}}{1-\overline{z_0^m}z^m}\right),\quad z\in P_m.\label{ha}
\end{equation}
Notice $h_{z_0}\in \mbox{Aut}(P_m)$ with $h_{z_0}(z_0)=0$,  and for each $v\in T_{z_0}^{1,0}P_m$, we have
\begin{equation}
(h_{z_0})_\ast(v)=dh_{z_0}(v)=\left(\frac{v^1}{1-|z_0^1|^2},\cdots,\frac{v^m}{1-|z_0^m|^2}\right)\in T_0^{1,0}P_m.\label{has}
\end{equation}
Substituting \eqref{has} into \eqref{nftk}, we get
\begin{eqnarray}
\varphi_{t,k}\big((h_{z_0})_\ast(v)\big)
&=&\frac{1}{\sqrt{1+t}}\sqrt{\sum_{l=1}^m\frac{|v^l|^2}{(1-|z_0^l|^2)^2}+t\sqrt[k]{\sum_{l=1}^m\frac{|v^l|^{2k}}{(1-|z_0^l|^2)^{2k}}}},\label{have}
\end{eqnarray}
this is, \eqref{hass} holds.

Finally, we show that $F_{t,k}(z;v)$, thus obtained, is indeed $\mbox{Aut}(P_m)$-invariant.
For any $\phi\in\mbox{Aut}(P_m)$, we denote $b=\phi(z_0)$. Then $ h_b\circ \phi\circ h_{z_0}^{-1}(0)=h_b\circ \phi(z_0)=h_b(b)=0$ and $h_b\circ \phi\circ h_{z_0}^{-1}\in\mbox{Iso}(P_m)$. Thus by \eqref{ivn} and \eqref{have}, we have
\begin{eqnarray*}
	F_{t,k}\big(\phi(z_0);\phi_\ast (v)\big)
	&=&\varphi_{t,k}\big((h_b)_\ast\circ \phi_\ast(v)\big)\quad\mbox{by}\;\eqref{hass}\\
	&=&\varphi_{t,k}\big((h_b\circ \phi\circ h_{z_0}^{-1})_\ast\circ (h_{z_0})_\ast(v)\big)\\
	&=&\varphi_{t,k}\big((h_{z_0})_\ast(v)\big)\quad \mbox{by}\;\eqref{ivn}\\
	&=&F_{t,k}(z_0;v).\quad\mbox{by}\;\eqref{hass}
\end{eqnarray*}
Since $z_0$ is an arbitrary fixed point in $P_m$, changing $z_0$ to $z$ if necessary we complete the assertion that $F_{t,k}$ is $ \mbox{Aut}(P_m)$-invariant.
\end{proof}

\begin{theorem}\label{th3}
Let $P_{m}$ be the  unit polydisc in $\mathbb{C}^m$ endowed with the $\mbox{Aut}(P_m)$-invariant K\"ahler-Berwald metric $F_{t, k}$, and $P_n$ be the  unit polydisc in $\mathbb{C}^n$ endowed with the $\mbox{Aut}(P_n)$-invariant K\"ahler-Berwald metric $\tilde{F}_{\tilde{t},\tilde{k}}.$ Then for any holomorphic mapping $f:P_m\rightarrow P_n$, we have
\begin{equation}
\big(f^\ast \tilde{F}_{\tilde{t},\tilde{k}}^2\big)(z;v)\leq \frac{n+\tilde{t}\sqrt[\tilde{k}]{n}}{1+\tilde{t}}F_{t,k}^2(z;v),\quad \forall z\in P_m, v\in T_z^{1,0}P_m.\label{t-1}
\end{equation}
The constant $\frac{n+\tilde{t}\sqrt[\tilde{k}]{n}}{1+\tilde{t}}$ in \eqref{t-1} is optimal for any fixed $t,\tilde{t}\in[0,+\infty)$ and any fixed integers $k,\tilde{k}\geq 2$.
\end{theorem}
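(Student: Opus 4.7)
The plan is a three-step reduction: first use $\mbox{Aut}$-invariance to move everything to the origin, then use Lemmas \ref{l-2.1}--\ref{l-2.2} to replace $f$ by its differential at $0$, and finally derive the sharp coefficient from a Jensen-type componentwise estimate.

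For the first step, given $z_0\in P_m$ and $f:P_m\to P_n$, I would set $\tilde{z}_0=f(z_0)$ and consider
$$
g:=h_{\tilde{z}_0}\circ f\circ h_{z_0}^{-1}:P_m\to P_n,\qquad g(0)=0,
$$
where $h_{z_0}$ and $h_{\tilde{z}_0}$ are the automorphisms from \eqref{ha}. By the $\mbox{Aut}$-invariance of $F_{t,k}$ and $\tilde{F}_{\tilde{t},\tilde{k}}$ (Theorem \ref{Z} and its analogue on $P_n$), the inequality \eqref{t-1} at $(z_0;v)$ for $f$ is equivalent to the same inequality at $(0;(h_{z_0})_\ast v)$ for $g$, so it suffices to prove \eqref{t-1} at $z=0$ for origin-fixing holomorphic maps. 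At the origin the pullback depends only on $g_\ast(0)$, which by Lemma \ref{l-2.1} is the linear map with matrix $A=(a_{il})$ carrying $P_m$ into $P_n$; Lemma \ref{l-2.2} then supplies the key constraint $c_l:=\sum_{i=1}^m|a_{il}|\leq 1$ for each $l=1,\ldots,n$.

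The technical heart is a componentwise bound on $w:=Av$. For any real $p\geq 1$, Jensen's inequality applied to the convex function $x\mapsto x^p$ with probability weights $|a_{il}|/c_l$ gives
$$
|w^l|^p\leq\Big(\sum_{i=1}^m|a_{il}||v^i|\Big)^p\leq c_l^{p-1}\sum_{i=1}^m|a_{il}||v^i|^p\leq\sum_{i=1}^m|a_{il}||v^i|^p.
$$
Summing over $l$ and setting $d_i:=\sum_{l=1}^n|a_{il}|$, I get $\sum_l|w^l|^p\leq\sum_id_i|v^i|^p\leq n\max_i|v^i|^p$, where the last step uses the \emph{global} constraint $\sum_id_i=\sum_lc_l\leq n$. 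Specializing to $p=2$ and $p=2\tilde{k}$ and combining with the elementary $\ell^k$-norm bound $\max_i|v^i|^2\leq\sqrt[k]{\sum_i|v^i|^{2k}}$, this yields both $\sum_l|w^l|^2\leq n\sqrt[k]{\sum_i|v^i|^{2k}}$ and $\bigl(\sum_l|w^l|^{2\tilde{k}}\bigr)^{1/\tilde{k}}\leq\sqrt[\tilde{k}]{n}\,\sqrt[k]{\sum_i|v^i|^{2k}}$. Since $F_{t,k}^2(0;v)=\tfrac{1}{1+t}\bigl(\sum_i|v^i|^2+t\sqrt[k]{\sum_i|v^i|^{2k}}\bigr)\geq\sqrt[k]{\sum_i|v^i|^{2k}}$ (because $\sum_i|v^i|^2\geq\sqrt[k]{\sum_i|v^i|^{2k}}$), substituting these two bounds into $\tilde{F}_{\tilde{t},\tilde{k}}^2(0;w)=\tfrac{1}{1+\tilde{t}}\bigl[\sum_l|w^l|^2+\tilde{t}\bigl(\sum_l|w^l|^{2\tilde{k}}\bigr)^{1/\tilde{k}}\bigr]$ delivers exactly the coefficient $(n+\tilde{t}\sqrt[\tilde{k}]{n})/(1+\tilde{t})$ in front of $F_{t,k}^2(0;v)$.

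For optimality, I would test the diagonal embedding $f(z)=(z^1,z^1,\ldots,z^1):P_m\to P_n$ at $z=0$ with $v=e_1$: a direct computation gives $F_{t,k}^2(0;e_1)=1$ and $\tilde{F}_{\tilde{t},\tilde{k}}^2(0;f_\ast e_1)=\tilde{F}_{\tilde{t},\tilde{k}}^2(0;(1,\ldots,1))=(n+\tilde{t}\sqrt[\tilde{k}]{n})/(1+\tilde{t})$, saturating \eqref{t-1}. The main conceptual obstacle is extracting the \emph{sharp} constant from the two competing pieces of $\tilde{F}_{\tilde{t},\tilde{k}}^2$: it is essential to use the global bound $\sum_id_i\leq n$ (rather than the weaker pointwise $|a_{il}|\leq 1$) together with the apparently weak lower bound $\sqrt[k]{\sum_i|v^i|^{2k}}\leq F_{t,k}^2(0;v)$, which is nevertheless precisely the comparison that balances both the quadratic and the $2\tilde{k}$-th power terms simultaneously.
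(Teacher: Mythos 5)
Your proposal is correct and follows essentially the same route as the paper: reduction to the origin via the automorphisms $h_{z_0}$, $\tilde h_{w_0}$ and the invariance of the metrics, linearization at $0$ through Lemmas \ref{l-2.1} and \ref{l-2.2}, an estimate of the form $\sum_l|w^l|^p\leq n\max_i|v^i|^p$, and the same diagonal map $f_0(z)=(z^1,\dots,z^1)$ with $v=e_1$ for sharpness. The only difference is cosmetic: your Jensen step with the aggregate bound $\sum_{i,l}|a_{il}|\leq n$, and the intermediate comparison through $\sqrt[k]{\sum_i|v^i|^{2k}}$, are slight detours — the paper gets the same conclusion more directly from the per-column constraint $\sum_i|a_{il}|\leq 1$, which gives $|w^l|\leq\max_i|v^i|$ componentwise, together with $F_{t,k}^2(0;v)\geq\max_i|v^i|^2$.
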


\begin{proof}

Let $z_0=(z_0^1,\cdots,z_0^m)$ be an arbitrary fixed point in $P_m$ and $f:P_m\rightarrow P_n$ be an arbitrary holomorphic mapping. Denote $w_0=(w_0^1,\cdots,w_0^n)=f(z_0)\in P_n$.

Define
\begin{equation}
h_{z_0}(z)
:=\left(\frac{z^1-z_0^1}{1-\overline{z_0^1}z^1},
\cdots,
\frac{z^m-z_0^m}{1-\overline{z_0^m}z^m}\right),\quad z\in P_m
\label{3.3}
\end{equation}
 and
\begin{equation}
\tilde{h}_{w_0}(w)
:=\left(\frac{w^1-w_0^1}{1-\overline{w_0^1}w^1},
\cdots,
\frac{w^n-w_0^n}{1-\overline{w_0^n}w^n}\right),\quad w\in P_n.
\label{3.4}
\end{equation}

Then $h_{z_0}\in\mbox{Aut}\left(P_{m}\right)$ and $\tilde{h}_{w_0}\in\mbox{Aut}\left(P_{n}\right)$. By \eqref{3.3} and \eqref{3.4} we have
\begin{equation}
h_{z_0}(z_0)=0\in P_m\quad \text{and} \quad \tilde{h}_{w_0}(w_0)=0\in P_n,\label{hh}
\end{equation}
which together with $w_0=f(z_0)$ implies that $g:=\tilde{h}_{w_0}\circ f\circ h_{z_0}^{-1}$ is a holomorphic mapping from $P_{m}$ into $P_{n}$ satisfying $g(0)=0$.
By definition, we have
\begin{equation}
\big(f^\ast\tilde{F}_{\tilde{t},\tilde{k}}^{2}\big)(z_0;v)=\tilde{F}_{\tilde{t},\tilde{k}}^{2}\big(f(z_0); f_\ast(v)\big), \label{ptf}
\end{equation}
where $f_\ast$ denotes the differential of $f$ at the point $z_0$.
On the other hand, $\tilde{F}_{\tilde{t},\tilde{k}}$ is $\mbox{Aut}\left(P_{n}\right)$-invariant and $\tilde{h}_{w_0}\in \mbox{Aut}(P_n)$, thus
\begin{eqnarray}
\tilde{F}_{\tilde{t},\tilde{k}}^{2}\big(f(z_0);f_\ast(v)\big)
&=&\tilde{F}_{\tilde{t},\tilde{k}}^{2}\Big(\tilde{h}_{w_0}\big(f(z_0)\big); \big(\tilde{h}_{w_0}\big)_\ast\big(f_\ast(v)\big)\Big)\nonumber\\
&=&\tilde{F}_{\tilde{t},\tilde{k}}^{2}\Big(\big(\tilde{h}_{w_0} \circ f \circ h_{z_0}^{-1}\big)\big(h_{z_0}(z_0)\big);\big(\tilde{h}_{w_0} \circ f \circ h_{z_0}^{-1}\big)_\ast\big((h_{z_0})_\ast(v)\big)\Big)\nonumber\\
&=&\tilde{F}_{\tilde{t},\tilde{k}}^{2}\Big(g(0); g_\ast\big((h_{z_0})_\ast(v)\big)\Big)\nonumber\\
&=&\tilde{F}_{\tilde{t},\tilde{k}}^2\Big(0 ; g_\ast\big((h_{z_0})_\ast(v)\big)\Big)\label{a-a}
\end{eqnarray}
for any $v\in T_{z_0}^{1,0}P_m$.

Now denote $u:=(h_{z_0})_\ast(v)\in T_{0}^{1,0}P_m$ and $g(z)=(g_{1}(z), \cdots, g_{n}(z))$. Then by Lemma \ref{l-2.1} and Lemma \ref{l-2.2}, we have
\begin{eqnarray}
\tilde{F}_{\tilde{t},\tilde{k}}^2\Big(0;g_\ast\big((h_{z_0})_\ast(v)\big)\Big)&=&\tilde{F}_{\tilde{t},\tilde{k}}^2\big(0;g_\ast(u)\big)\nonumber\\
&=&\frac{1}{1+\tilde{t}}\left\{\sum_{l=1}^{n} \left|\sum_{j=1}^{m}u^{j}a_{jl}\right|^{2}+\tilde{t} \sqrt[\tilde{k}]{\sum_{l=1}^{n} \left|\sum_{j=1}^{m}u^{j}a_{jl}\right|^{2\tilde{k}}}\right\}\nonumber\\
&\leq& \frac{1}{1+\tilde{t}}\left\{\sum_{l=1}^{n}\left(\sum_{j=1}^{m}|u^{j}||a_{jl}|\right)^{2}+\tilde{t}\sqrt[\tilde{k}]{\sum_{l=1}^{n} \left(\sum_{j=1}^{m}|u^{j}||a_{jl}|\right)^{2\tilde{k}}}\right\}\nonumber\\
&\leq& \frac{1}{1+\tilde{t}}\left\{\sum_{l=1}^{n}\left(\max_{1\leq j\leq  m} \big\{|u^j|\big\}\sum_{j=1}^{m}|a_{jl}|\right)^{2}+\tilde{t}\sqrt[\tilde{k}]{\sum_{l=1}^{n}\left(\max_{1\leq j\leq m} \big\{|u^{j}|\big\} \sum_{j=1}^{m}|a_{jl}|\right)^{2\tilde{k}}}\right\}\nonumber\\
&\leq& \frac{1}{1+\tilde{t}}\left\{\sum_{l=1}^{n}\left(\max_{1\leq j\leq m} \big\{|u^{j}|\big\}\right)^{2}+t\sqrt[\tilde{k}]{\sum_{l=1}^{n} \Big(\max_{1\leq j\leq m} \big\{|u^{j}|\big\}\Big)^{2\tilde{k}}}\right\}\nonumber\\
&=&\frac{n+\tilde{t}\sqrt[\tilde{k}]{n}}{1+\tilde{t}}\max_{1\leq j\leq m} \left\{|u^{j}|^2\right\}\label{a-b}.
\end{eqnarray}
On the other hand, since $F_{t,k}$ is $\mbox{Aut}(P_m)$-invariant and $h_{z_0}\in\mbox{Aut}(P_m)$, we have
\begin{eqnarray}
F_{t,k}^2(z_0;v)&=&F_{t,k}^2\big(h_{z_0}(z_0);(h_{z_0})_\ast(v)\big)\nonumber\\
&=&F_{t,k}^2\big(0;(h_{z_0})_\ast(v)\big)\nonumber\\
&=&F_{t, k}^{2}(0; u)\nonumber\\
&=&\frac{1}{1+t}\left\{\sum_{j=1}^{m}|u^{j}|^2+t \sqrt[k]{\sum_{j=1}^{m} |u^j|^{2k}}\right\}\nonumber\\
&\geq&\frac{1}{1+t}\left\{\max_{1\leq j\leq m} \big\{|u^j|^2\big\}+t \sqrt[k]{\max_{1\leq j\leq m} \big\{|u^{j}|^{2k}\big\}}\right\}\nonumber\\
&=&\max_{1\leq j\leq m} \big\{|u^{j}|^2\big\}.\label{a-c}
\end{eqnarray}
By \eqref{a-a}-\eqref{a-c}, we obtain
$$\big(f^\ast \tilde{F}_{\tilde{t},\tilde{k}}^{2}\big)(z_0; v)\leq\frac{n+\tilde{t}\sqrt[\tilde{k}]{n}}{1+\tilde{t}}F_{t, k}^{2}(z_0; v),\quad \forall v\in T_{z_0}^{1,0}P_m.$$
Since $z_0$ is an arbitrary fixed point in $P_m$, changing $z_0$ to $z$ if necessary we obtain \eqref{t-1}.


To see that the constant $\frac{n+\tilde{t}\sqrt[\tilde{k}]{n}}{1+\tilde{t}}$ is optimal, let's take $w=f_0(z)=\big(z^1,\cdots,z^1\big)$ (that is, $w^l=z^1$ for $l=1,\cdots,n$) and $v_0=(1,0,\cdots,0)\in T_0^{1,0}P_m$. Then
$$(f_0)_\ast(v_0)=(1,\cdots,1),$$
where $(f_0)_\ast$ denotes the differential of $f_0$ at the point $z_0=0$. It is easy to check that
$$\big(f_0^\ast\tilde{F}_{\tilde{t},\tilde{k}}^2\big)(z_0;v_0)=\frac{n+\tilde{t}\sqrt[\tilde{k}]{n}}{1+\tilde{t}},\quad F_{t,k}^2(z_0;v_0)=1.$$
Consequently
$$
\big(f_0^\ast\tilde{F}_{\tilde{t},\tilde{k}}^2\big)(z_0;v_0)=\frac{n+\tilde{t}\sqrt[\tilde{k}]{n}}{1+\tilde{t}}F_{t,k}^2(z_0;v_0).
$$
This completes the proof.
\end{proof}
\begin{corollary}
	Let $P_m$ be the  unit polydisc in $\mathbb{C}^m$ endowed with the Bergman metric $F_0$, and $P_n$ be the unit polydisc in $\mathbb{C}^n$ endowed with the $\mbox{Aut}(P_n)$-invariant K\"ahler-Berwald metric $\tilde{F}_{\tilde{t},\tilde{k}}.$ Then for any holomorphic mapping $f: P_m\rightarrow P_n$, we have
	\begin{equation}
		\big(f^\ast \tilde{F}_{\tilde{t},\tilde{k}}^2\big)(z;v)\leq \frac{n+\tilde{t}\sqrt[\tilde{k}]{n}}{1+\tilde{t}}F_0^2(z;v),\quad \forall z\in P_m, v\in T_z^{1,0}P_m.\label{t-2}
	\end{equation}
\end{corollary}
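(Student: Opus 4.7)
The plan is to observe that this corollary is a direct specialization of Theorem \ref{th3} to the case $t=0$, so essentially no new work is required beyond recording this fact. First I would recall from the discussion following \eqref{F0} that when $t=0$ the metric $F_{t,k}$ defined by \eqref{Ftk} collapses to the usual Bergman metric $F_0$ on $P_m$, independently of the value of the integer $k$; indeed the factor $\tfrac{1}{\sqrt{1+t}}$ becomes $1$ and the term carrying $t$ vanishes, leaving precisely the Hermitian quadratic expression in \eqref{F0}.

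Next I would apply Theorem \ref{th3} with the source polydisc $P_m$ endowed with $F_{0,k}=F_0$ and the target polydisc $P_n$ endowed with $\tilde{F}_{\tilde{t},\tilde{k}}$. The inequality \eqref{t-1} then reads
\begin{equation*}
\big(f^{\ast}\tilde{F}_{\tilde{t},\tilde{k}}^{2}\big)(z;v)\le \frac{n+\tilde{t}\sqrt[\tilde{k}]{n}}{1+\tilde{t}}\,F_{0,k}^{2}(z;v)=\frac{n+\tilde{t}\sqrt[\tilde{k}]{n}}{1+\tilde{t}}\,F_{0}^{2}(z;v),
\end{equation*}
valid for every holomorphic mapping $f:P_m\to P_n$ and every $(z,v)\in T^{1,0}P_m$, which is exactly \eqref{t-2}. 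No obstacle is expected; the only thing to emphasize is that the right-hand coefficient is independent of the parameters $t,k$ governing the source metric, consistent with the final remark after Theorem \ref{mth}. One may optionally point out that the extremal mapping $f_0(z)=(z^1,\dots,z^1)$ with $v_0=(1,0,\dots,0)$ used in the proof of Theorem \ref{th3} still realizes equality here, showing that the constant $\tfrac{n+\tilde{t}\sqrt[\tilde{k}]{n}}{1+\tilde{t}}$ remains optimal in \eqref{t-2}.
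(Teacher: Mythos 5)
Your proposal is correct and matches the paper's treatment: the corollary is stated there without a separate proof, as an immediate specialization of Theorem \ref{th3} to $t=0$, where $F_{0,k}$ reduces to the Bergman metric $F_0$ independently of $k$. Your optional remark that the extremal map $f_0(z)=(z^1,\dots,z^1)$ with $v_0=(1,0,\dots,0)$ still realizes equality is consistent with the paper's optimality argument and adds nothing problematic.
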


\begin{corollary}
	Let $P_m$ be the  unit polydisc in $\mathbb{C}^m$ endowed with the $\mbox{Aut}(P_m)$-invariant K\"ahler-Berwald metric $F_{t,k}$, and $P_n$ be the unit polydisc in $\mathbb{C}^n$ endowed with the Bergman metric $\tilde{F}_0.$ Then
	for any holomorphic mapping $f:P_m\rightarrow P_n$, we have
	\begin{equation}
		\big(f^\ast \tilde{F}_0^2\big)(z;v)\leq nF_{t,k}^2(z;v),\quad \forall z\in P_m, v\in T_z^{1,0}P_m.\label{t-3}
	\end{equation}
\end{corollary}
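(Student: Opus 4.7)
The plan is to derive this corollary directly from Theorem \ref{th3} by specializing the parameters on the target side. Observe that the definition \eqref{tFtk} of $\tilde{F}_{\tilde{t},\tilde{k}}$ degenerates to the Bergman metric $\tilde{F}_0$ on $P_n$ when $\tilde{t}=0$: the second radical term disappears, the prefactor $\tfrac{1}{\sqrt{1+\tilde{t}}}$ becomes $1$, and the result is precisely the expression \eqref{F0} with $m$ replaced by $n$. Moreover, $\tilde{F}_0$ is $\mbox{Aut}(P_n)$-invariant (being a positive constant multiple of the Bergman metric), so the hypothesis of Theorem \ref{th3} is met with $\tilde{t}=0$ and any integer $\tilde{k}\geq 2$.

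Next I would simply substitute $\tilde{t}=0$ into the constant in \eqref{t-1}. The optimal constant $\frac{n+\tilde{t}\sqrt[\tilde{k}]{n}}{1+\tilde{t}}$ specializes to $\frac{n+0}{1+0}=n$, independent of the choice of $\tilde{k}$. Applying Theorem \ref{th3} to the arbitrary holomorphic mapping $f:P_m\to P_n$ with $(P_m,F_{t,k})$ on the source and $(P_n,\tilde{F}_0)$ on the target then yields
\[
\big(f^\ast \tilde{F}_0^2\big)(z;v)\le n\, F_{t,k}^2(z;v)\qquad\forall z\in P_m,\ v\in T_z^{1,0}P_m,
\]
which is exactly \eqref{t-3}.

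There is essentially no obstacle here: the corollary is a one-line specialization of Theorem \ref{th3}. The only point worth flagging is that $\tilde{k}$ plays no role once $\tilde{t}=0$, so the statement is independent of how one views the Bergman metric as a limit of the family $\tilde{F}_{\tilde{t},\tilde{k}}$. One could, if desired, note that the bound $n$ is optimal (by the same test mapping $f_0(z)=(z^1,\dots,z^1)$ and vector $v_0=(1,0,\dots,0)$ used in the proof of Theorem \ref{th3}), recovering K.~H.~Look's classical constant in the special case where the source metric $F_{t,k}$ is also Bergman (i.e.\ $t=0$ and $m=n$).
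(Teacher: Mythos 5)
Your proposal is correct and matches the paper's intended argument: the corollary is stated there without separate proof precisely because it is the specialization $\tilde{t}=0$ (any $\tilde{k}\geq 2$) of Theorem \ref{th3}, under which $\tilde{F}_{\tilde{t},\tilde{k}}$ becomes the Bergman metric $\tilde{F}_0$ on $P_n$ and the constant $\frac{n+\tilde{t}\sqrt[\tilde{k}]{n}}{1+\tilde{t}}$ becomes $n$. Your added remark on optimality via $f_0(z)=(z^1,\dots,z^1)$ is consistent with the paper's own optimality discussion and is a harmless bonus.
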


\begin{corollary}\label{cor-3}
	Let $P_m$ be the  unit polydisc in $\mathbb{C}^m$ endowed with the Bergman metric $F_0$, and $P_n$ be the unit polydisc in $\mathbb{C}^n$ endowed with the Bergman metric $\tilde{F}_0.$ Then
	for any holomorphic mapping $f:P_m\rightarrow P_n$, we have
	\begin{equation}
		\big(f^\ast \tilde{F}_0^2\big)(z;v)\leq nF_0^2(z;v),\quad \forall z\in P_m, v\in T_z^{1,0}P_m.\label{t-4}
	\end{equation}
\end{corollary}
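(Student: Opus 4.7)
The plan is to deduce Corollary \ref{cor-3} as an immediate specialization of Theorem \ref{th3}, by setting both deformation parameters to zero. Inspecting \eqref{ftk}, when $t=0$ the prefactor $\frac{1}{\sqrt{1+t}}$ becomes $1$ and the $t$-term drops out, so $F_{t,k}$ collapses to the Bergman metric $F_0$ on $P_m$ recorded in \eqref{F0}, independently of the integer $k\geq 2$; exactly the same observation applied to $\tilde{F}_{\tilde{t},\tilde{k}}$ with $\tilde{t}=0$ yields $\tilde{F}_0$ on $P_n$.

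With these choices the constant on the right-hand side of \eqref{t-1} simplifies to
\[
\left.\frac{n+\tilde{t}\sqrt[\tilde{k}]{n}}{1+\tilde{t}}\right|_{\tilde{t}=0}=n,
\]
and \eqref{t-1} reads $(f^{\ast}\tilde{F}_0^2)(z;v)\leq n\,F_0^2(z;v)$ for every holomorphic $f:P_m\to P_n$ and every $(z,v)\in T^{1,0}P_m$, which is exactly \eqref{t-4}. So no additional calculation is needed; in fact the same conclusion also follows by substituting $t=0$ in the first of the two preceding corollaries, or $\tilde{t}=0$ in the second.

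Since the derivation is essentially tautological, the only substantive remark worth including in the write-up is that Corollary \ref{cor-3} recovers K.\,H.\ Look's classical Theorem \ref{th1} (in the slightly more general form that allows $m\neq n$), thereby confirming that Theorem \ref{th3} is a genuine generalization of the Bergman-to-Bergman Schwarz lemma on polydiscs. If one instead wanted a self-contained proof not invoking Theorem \ref{th3}, the same scheme --- composing $f$ with the automorphisms $h_{z_0}\in\mbox{Aut}(P_m)$ and $\tilde{h}_{w_0}\in\mbox{Aut}(P_n)$ defined in \eqref{3.3}--\eqref{3.4} to reduce to a mapping $g:P_m\to P_n$ with $g(0)=0$, then applying Lemma \ref{l-2.1} and Lemma \ref{l-2.2} to control the linear part, and finally using Hermitian expressions for $F_0$ and $\tilde{F}_0$ at the origin --- goes through verbatim; the algebra is shorter because the two $\sqrt[k]{\cdot}$-terms are absent.

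The main (and essentially only) obstacle is nominal: verifying that $F_{t,k}|_{t=0}$ coincides with the Bergman metric $F_0$ defined by \eqref{F0}, which is read off directly from \eqref{ftk}. No analytic difficulty arises, and the optimality of the constant $n$ is inherited from the optimality clause of Theorem \ref{th3} (or equivalently from the example $f_0(z)=(z^1,\dots,z^1)$, $v_0=(1,0,\dots,0)$ used there).
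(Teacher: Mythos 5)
Your proposal is correct and matches the paper's route exactly: the paper obtains Corollary \ref{cor-3} as an immediate specialization of Theorem \ref{th3} with $t=\tilde{t}=0$, where $F_{t,k}$ and $\tilde{F}_{\tilde{t},\tilde{k}}$ reduce to the Bergman metrics and the constant $\frac{n+\tilde{t}\sqrt[\tilde{k}]{n}}{1+\tilde{t}}$ becomes $n$. Your additional remarks on recovering Theorem \ref{th1} and on optimality are consistent with the paper's own remarks and require no changes.
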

\begin{remark}
	Corollary \ref{cor-3} reduces to Theorem \ref{th1} whenever $m=n$.
\end{remark}

When $m=n$ in the Theorem \ref{th3}, we have the following Proposition \ref{prop3}.
\begin{proposition}\label{prop3}
	Let $P_{m}$ be the  unit polydisc in $\mathbb{C}^m$ endowed with the $\mbox{Aut}(P_m)$-invariant K\"ahler-Berwald metric $F_{t, k}$, and $f$ be a holomorphic mapping  from $P_m$ into itself. If there exists a point $z_0 \in P_m$ such that
	\begin{eqnarray}
		\big(f^\ast F^2_{t,k}\big)(z_0;v) \geq F^2_{t,k}(z_0; v),\quad \forall z_0\in P_m, v\in T_{z_0}^{1,0}P_m,\label{th3-4}
	\end{eqnarray}
	then  $f \in \mbox{Aut}(P_m)$.
\end{proposition}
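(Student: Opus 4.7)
The plan is to normalize $f$ to an origin-fixing self-map of $P_m$ and then combine the hypothesis with the bound from Lemmas~\ref{l-2.1}--\ref{l-2.2} to force the derivative of this normalized map at $0$ to be a monomial unitary operator; Cartan's uniqueness theorem then closes the argument. Setting $w_0 := f(z_0)$, I define $g := h_{w_0}\circ f\circ h_{z_0}^{-1}$ with $h_{z_0}, h_{w_0}\in\mbox{Aut}(P_m)$ as in \eqref{3.3}--\eqref{3.4}, so that $g:P_m\to P_m$ is holomorphic with $g(0)=0$. Writing $u := (h_{z_0})_\ast(v)$ and repeating the $\mbox{Aut}(P_m)$-invariance calculation carried out in \eqref{a-a}, \eqref{a-c}, the hypothesis transfers to
\begin{equation*}
F_{t,k}^{2}\bigl(0;g_\ast(u)\bigr) \;\geq\; F_{t,k}^{2}(0;u), \qquad \forall\, u\in T_0^{1,0}P_m.
\end{equation*}

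Next I would analyze the linear part $L := g_\ast(0)$ of $g$. Expanding $g_i(z) = \sum_{j=1}^{m} a_{ji}\, z^j + O(|z|^2)$, Lemma~\ref{l-2.1} together with Lemma~\ref{l-2.2} gives the column bound $\sum_{j=1}^{m} |a_{ji}|\leq 1$ for every $i=1,\ldots,m$. Testing the transferred inequality at $u = e_s$ yields $\sum_{i=1}^{m}|a_{si}|^{2} + t\sqrt[k]{\sum_{i=1}^{m}|a_{si}|^{2k}} \geq 1+t$, and summing over $s = 1,\ldots,m$ produces
\begin{equation*}
\sum_{i,s}|a_{si}|^{2} + t\sum_{s=1}^{m}\sqrt[k]{\sum_{i=1}^{m}|a_{si}|^{2k}} \;\geq\; m(1+t).
\end{equation*}
On the other hand, for each $i$ the column bound gives $\sum_{s}|a_{si}|^{2}\leq \max_{s}|a_{si}|\cdot \sum_{s}|a_{si}|\leq 1$, and for each $s$ the $\ell^{k}\!\subset\!\ell^{1}$ comparison inequality applied to the nonnegative vector $(|a_{si}|^{2})_{i}$ gives $\bigl(\sum_{i}|a_{si}|^{2k}\bigr)^{1/k}\leq \sum_{i}|a_{si}|^{2}$. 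Combining these two chains yields the reverse bound $\leq m(1+t)$, so equality must hold throughout.

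The equality analysis is the heart of the argument. For each fixed $i$, equality in $\sum_{s}|a_{si}|^{2}=\max_{s}|a_{si}|\cdot \sum_{s}|a_{si}|=1$ forces exactly one entry in column $i$ to be nonzero with modulus one; for each fixed $s$, equality in the $\ell^{k}$-versus-$\ell^{1}$ comparison (valid for $k\geq 2$) forces row $s$ to carry at most one nonzero entry. Combining, $L$ is a unitary monomial operator $L(u)^{i} = e^{\sqrt{-1}\theta_{i}}\, u^{\sigma(i)}$ for some permutation $\sigma$ of $\{1,\ldots,m\}$ and $\theta_i\in\mathbb{R}$, and hence $L\in\mbox{Iso}(P_m)\subset \mbox{Aut}(P_m)$. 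Finally, applying Cartan's uniqueness theorem on the bounded domain $P_m$ to $\tilde g := L^{-1}\circ g$, which satisfies $\tilde g(0)=0$ and $d\tilde g(0)=\mathrm{Id}$, gives $\tilde g\equiv \mathrm{id}$; therefore $g = L\in\mbox{Aut}(P_m)$ and $f = h_{w_0}^{-1}\circ g\circ h_{z_0}\in\mbox{Aut}(P_m)$. The main obstacle is the simultaneous equality analysis in the two distinct inequalities (the elementary row-column bound and the $\ell^{k}$-versus-$\ell^{1}$ comparison) used to rigidify the matrix $(a_{ji})$ into monomial form; the normalization and the appeal to Cartan's rigidity are purely formal.
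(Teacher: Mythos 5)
Your proof is correct, but it follows a genuinely different route from the paper's. The paper picks $\varphi\in\mbox{Aut}(P_m)$ with $\varphi(z_0)=f(z_0)$, so that $h:=\varphi^{-1}\circ f$ is a holomorphic self-map of $P_m$ fixing $z_0$; since $\varphi^{-1}$ is an isometry of $F_{t,k}$, the hypothesis \eqref{th3-4} and the homogeneity $F_{t,k}(z_0;\mu v)=|\mu|\,F_{t,k}(z_0;v)$ force every eigenvalue of $(h_\ast)_{z_0}$ to have modulus at least $1$, hence $\left|\det (h_\ast)_{z_0}\right|\geq 1$, and the Cartan--Carath\'eodory theorem immediately gives $h\in\mbox{Aut}(P_m)$, so $f\in\mbox{Aut}(P_m)$. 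You instead normalize to the origin by $h_{z_0},h_{w_0}$ as in \eqref{3.3}--\eqref{3.4}, transfer the hypothesis by invariance, and then use Lemmas \ref{l-2.1}--\ref{l-2.2} together with a double equality analysis (the column bound $\sum_{j}|a_{ji}|\leq 1$ versus the row-wise $\ell^{k}$-against-$\ell^{1}$ comparison) to rigidify $g_\ast(0)$ into a unimodular monomial matrix, finishing with Cartan's uniqueness theorem applied to $L^{-1}\circ g$. Your version is longer and more computational, but it is more self-contained (only the elementary lemmas of Section \ref{se2} and Cartan's uniqueness theorem are needed, rather than the Cartan--Carath\'eodory rigidity theorem) and it yields strictly more information: it shows $g_\ast(0)\in\mbox{Iso}(P_m)$ and hence exhibits $f=h_{w_0}^{-1}\circ L\circ h_{z_0}$ explicitly, identifying the extremal maps. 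One small caveat: your row-rigidity step only bites when $t>0$, because the $\ell^{k}$-versus-$\ell^{1}$ term carries the factor $t$; since the proposition includes $t=0$ (the Bergman case), you should add one line there: the transferred inequality at $u=e_s$ gives $\sum_{i}|a_{si}|^{2}\geq 1$, so every row of $(a_{ji})$ is nonzero, and since the column analysis leaves only $m$ nonzero entries in total (one unimodular entry per column), each row carries exactly one, after which the argument proceeds unchanged.
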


\begin{proof}
Take $\varphi \in \mbox{Aut}(P_m)$ such that $\varphi(z_0)=f(z_0)$, which is possible since $P_m$ is homogeneous. Then $ h:=\varphi^{-1} \circ f $ is a holomorphic map from $P_m$ into itself and keeps $z_0$ fixed. Note that $\varphi^{-1}\in \mbox{Aut}(P_m)$, hence it is also an isometry of $F_{t,k}$, which together with  the condition \eqref{th3-4} implies
\begin{eqnarray*}
	(h^{*}F^2_{t,k})(z_0; v)=\left((\varphi^{-1})^\ast\circ f^\ast\right) F_{t,k}^2(z_0;v)=f^\ast F_{t,k}^2(z_0;v) \geq F^2_{t,k}(z_0; v), \quad\forall v\in T_{z_0}^{1,0}P_m.
\end{eqnarray*}

Now let $\mu_i(i=1,\cdots,m)$ be an eigenvalue of the matrix $(h_{\ast })_{z_0}$, and  $v_i$ be an eigenvector corresponding to $\mu_i$,  that is,  $( h_{\ast })_{z_0}v_i=\mu_i v_i$. Then
\begin{eqnarray*}
(h^{*}F^2_{t,k})(z_0; v_i)= F^2_{t,k}(z_0; \mu_i v_i)= |\mu_i|^2F^2_{t,k}(z_0;  v_i)\geq F^2_{t,k}(z_0; v_i),\quad i=1,\cdots,m,
\end{eqnarray*}
thus we have $|\mu_i|\geq1$, and conseqently $\left|\det\left(( h_{\ast })_{z_0}\right)\right|=|\mu_1\cdots\mu_m|\geq 1.$ By a well known theorem of H. Cartan and Carath\'{e}odory (e.g., in Chapter 1 of \cite{BM}) it follows that $h \in \mbox{Aut}(P_m)$, from which it follows that $f \in \mbox{Aut}(P_m)$. This completes the proof.
\end{proof}

\begin{proposition}\label{prop1}
Let $f:P_m\rightarrow P_n$ be a holomorphic mapping satisfying $f(0)=0.$ Then for any fixed $l\in\{1, 2, \cdots, m\}$ and
$0\neq v=\big(\underbrace{0, \cdots, 0}_{l-1}, v^l, 0, \cdots, 0\big)\in T^{1,0}_0P_m$,
\begin{eqnarray}
\big(f^\ast\tilde{F}_{\tilde{t},\tilde{k}}^2\big)(0;v)=\frac{n+\tilde{t}\sqrt[\tilde{k}]{n}}{1+\tilde{t}}F_{t,k}^2(0;v) \label{b-c}
\end{eqnarray}	
holds if and only if
\begin{eqnarray}
f_j\big(\underbrace{0, \cdots, 0}_{l-1}, z^l, 0, \cdots, 0\big)=e^{\sqrt{-1}\theta_j}z^l, \quad \theta_j\in \mathbb{R}, j=1, \cdots, n.	\label{b-d}
\end{eqnarray}	
\end{proposition}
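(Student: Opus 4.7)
The plan is to prove the two directions separately: sufficiency by direct substitution, necessity by tracing the equality cases of the inequalities used in the proof of Theorem~\ref{th3}.

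For sufficiency, assume $f_j(0,\dots,0,z^l,0,\dots,0)=e^{\sqrt{-1}\theta_j}z^l$ for each $j$. Then for $v=(0,\dots,0,v^l,0,\dots,0)$ one has $f_\ast(v)=\big(e^{\sqrt{-1}\theta_1}v^l,\dots,e^{\sqrt{-1}\theta_n}v^l\big)$. Substituting directly into the definitions of $\tilde F_{\tilde t,\tilde k}$ and $F_{t,k}$ at the origin yields $\tilde F_{\tilde t,\tilde k}^2(0;f_\ast(v))=\tfrac{n+\tilde t\sqrt[\tilde k]{n}}{1+\tilde t}|v^l|^2$ and $F_{t,k}^2(0;v)=|v^l|^2$, so \eqref{b-c} holds.

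For necessity, since $f(0)=0$, taking $z_0=0$ in the proof of Theorem~\ref{th3} makes both $h_{z_0}$ and $\tilde h_{w_0}$ the identity, so $g=f$ and $u=v$. Writing the Taylor expansion $f_j(z)=\sum_{i=1}^m a_{ij}z^i+O(|z|^2)$ and using that $v$ has only its $l$-th entry nonzero, one obtains $f_\ast(v)=(a_{l1}v^l,\dots,a_{ln}v^l)$ and
\[
F_{t,k}^2(0;v)=|v^l|^2,\qquad \tilde F_{\tilde t,\tilde k}^2\big(0;f_\ast(v)\big)=\tfrac{|v^l|^2}{1+\tilde t}\Big(\sum_{j=1}^n|a_{lj}|^2+\tilde t\sqrt[\tilde k]{\sum_{j=1}^n|a_{lj}|^{2\tilde k}}\Big),
\]
so \eqref{b-c} reduces to the scalar identity
\[
\sum_{j=1}^n|a_{lj}|^2+\tilde t\sqrt[\tilde k]{\sum_{j=1}^n|a_{lj}|^{2\tilde k}}=n+\tilde t\sqrt[\tilde k]{n}.
\]
By Lemma~\ref{l-2.1} and Lemma~\ref{l-2.2}, the linear part of $f$ maps $P_m$ into $P_n$, so $\sum_{i=1}^m|a_{ij}|\le 1$ for every $j$, and in particular $|a_{lj}|\le 1$. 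Hence $\sum_j|a_{lj}|^2\le n$ and $\sum_j|a_{lj}|^{2\tilde k}\le n$ \emph{separately}; the displayed identity forces both bounds to be saturated, which compels $|a_{lj}|=1$ for every $j=1,\dots,n$. Combined with $\sum_i|a_{ij}|\le 1$, this forces $a_{ij}=0$ whenever $i\neq l$, so the Jacobian of $f$ at $0$ has a single nonzero row in the $l$-th slot. Finally, restricting to the $l$-th coordinate axis, $\phi_j(\zeta):=f_j(0,\dots,0,\zeta,0,\dots,0)$ is a holomorphic self-map of $\triangle$ with $\phi_j(0)=0$ and $|\phi_j'(0)|=|a_{lj}|=1$; the classical one-variable Schwarz lemma then gives $\phi_j(\zeta)=e^{\sqrt{-1}\theta_j}\zeta$, which is exactly \eqref{b-d}.

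The only subtlety is the combined equality involving two terms of differing powers. It is resolved by the observation above that $|a_{lj}|\le 1$ independently bounds each of $\sum_j|a_{lj}|^2$ and $\sum_j|a_{lj}|^{2\tilde k}$ by $n$, so equality in the sum forces simultaneous saturation of both; this rigidity together with the column bound of Lemma~\ref{l-2.2} pins down the Jacobian of $f$ at $0$, after which the one-variable Schwarz lemma promotes the linear data into the required form on the whole coordinate restriction.
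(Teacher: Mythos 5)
Your proof is correct and follows essentially the same route as the paper: sufficiency by direct substitution, and necessity by reducing the equality to the first-order coefficients $a_{lj}=\frac{\partial f_j}{\partial z^l}(0)$, using Lemma \ref{l-2.1} and Lemma \ref{l-2.2} to get $|a_{lj}|\leq 1$ and hence $|a_{lj}|=1$ from saturation, then applying the classical one-variable Schwarz lemma to the coordinate restrictions $\zeta\mapsto f_j(0,\cdots,0,\zeta,0,\cdots,0)$. Your extra observation that the Jacobian rows with $i\neq l$ vanish is true but not needed for the conclusion.
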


\begin{proof}
First we prove the necessity. By definition,
$$
\big(f^\ast\tilde{F}_{\tilde{t},\tilde{k}}^2\big)(0;v)
=\frac{1}{1+\tilde{t}}\left\{\sum_{j=1}^{n} \left|\sum_{i=1}^{m}\frac{\partial f_j}{\partial z^i }(0)v^{i}\right|^{2}+\tilde{t}\sqrt[\tilde{k}]{\sum_{j=1}^{n} \left|\sum_{i=1}^{m}\frac{\partial f_j}{\partial z^i }(0)v^{i}  \right|^{2\tilde{k}}}\right\},\quad \forall v\in T_0^{1,0}P_m.
$$
 Thus for any fixed $l\in\{1, 2, \cdots, m\}$ and
$0\neq v=\big(\underbrace{0, \cdots, 0}_{l-1}, v^l, 0, \cdots, 0\big)\in T^{1,0}_0P_m$, we have
\begin{eqnarray}
\big(f^\ast\tilde{F}_{\tilde{t},\tilde{k}}^2\big)(0;v)
&=&\frac{1}{1+\tilde{t}}\left\{\sum_{j=1}^{n} \left|\frac{\partial f_j}{\partial z^l }(0)v^{l}\right|^{2}+\tilde{t}\sqrt[\tilde{k}]{\sum_{j=1}^{n} \left|\frac{\partial f_j}{\partial z^l }(0)v^{l}  \right|^{2\tilde{k}}}\right\}.\label{4-2-a}
\end{eqnarray}

By Lemma \ref{l-2.2}, we have
$$
\left|\frac{\partial f_j}{\partial z^l }(0)\right|\leq 1
$$
for any  $j=1,\cdots,n$, which together with \eqref{4-2-a} implies that
\begin{eqnarray}
\big(f^\ast\tilde{F}_{\tilde{t},\tilde{k}}^2\big)(0;v)
&\leq& \frac{1}{1+\tilde{t}}\left\{\sum_{j=1}^{n}|v^{l}|^{2}+\tilde{t}\sqrt[\tilde{k}]{\sum_{j=1}^{n} |v^{l}|^{2\tilde{k}}}\right\}\nonumber\\
&=&\frac{n+\tilde{t}\sqrt[\tilde{k}]{n}}{1+\tilde{t}}|v^{l}|^2\nonumber\\
&=&\frac{n+\tilde{t}\sqrt[\tilde{k}]{n}}{1+\tilde{t}}F_{t,k}^2(0;v)\label{a-r}.
\end{eqnarray}
Therefore if the equality in \eqref{b-c} holds, then it necessary that
$$\left|\frac{\partial f_j}{\partial z^l }(0)\right|=1,\quad j=1,\cdots,n.$$
So that if we define holomorphic mappings
$$
g_j:\triangle \longrightarrow \triangle,\quad g_j\big(\zeta\big):=f_j\big(\underbrace{0, \cdots, 0}_{l-1}, \zeta, 0, \cdots, 0\big),\quad j=1,\cdots,n.
$$
Then $g_j(0)=0,$ and $ |g'_j(0)|=\left|\frac{\partial f_j}{\partial z^l }(0)\right|=1.$ Thus by the classical Schwarz lemma of one complex variable, we have $g_j(\zeta)=e^{\sqrt{-1}\theta_j}\zeta$, so that
$$
f_j\big(\underbrace{0, \cdots, 0}_{l-1}, z^l, 0, \cdots, 0\big)=g_j(z^l)=e^{\sqrt{-1}\theta_j}z^l,\quad \theta_j\in \mathbb{R}, j=1, \cdots, n.
$$

Now we prove the sufficiency. If $f_j\big(\underbrace{0, \cdots, 0}_{l-1}, z^l, 0, \cdots, 0\big)=e^{\sqrt{-1}\theta_j}z^l$ for  $\theta_j\in \mathbb{R}$ and $j=1, \cdots, n$. Taking $v_0=\big(\underbrace{0, \cdots, 0}_{l-1}, 1, 0, \cdots, 0\big)\in T_0^{1,0}P_m$,  we have $f_\ast(v_0)=\big(e^{\sqrt{-1}\theta_1},\cdots,e^{\sqrt{-1}\theta_n}\big)$ and
$$
\big(f^\ast\tilde{F}_{\tilde{t},\tilde{k}}^2\big)(0;v_0)
=\frac{1}{1+\tilde{t}}\left\{\sum_{j=1}^{n}\big|e^{\sqrt{-1}\theta_j}\big|^{2}+\tilde{t}\sqrt[\tilde{k}]{\sum_{j=1}^{n} \big|e^{\sqrt{-1}\theta_j}\big|^{2\tilde{k}}}\right\}
=\frac{n+\tilde{t}\sqrt[\tilde{k}]{n}}{1+\tilde{t}}, \quad F_{t,k}^2(0;v_0)=1.
$$
Consequently
$$\big(f^\ast\tilde{F}_{\tilde{t},\tilde{k}}^2\big)(0;v_0)=\frac{n+\tilde{t}\sqrt[\tilde{k}]{n}}{1+\tilde{t}}F_{t,k}^2(0;v_0).$$
This completes the proof.
\end{proof}

By the definition of \eqref{nftk},
the indicatrix of $\varphi_{t,k}$  is defined by
\begin{equation}
I_{\varphi_{t,k}}(0)=\left\{v\in T_0^{1,0}P_m|\varphi_{t,k}(v)<1\right\}.
\end{equation}
Denote $B_m$ the unit ball of the canonical complex  Euclidean norm in $\mathbb{C}^m$. Then the following proposition is obtained immediately.
\begin{proposition}
Let $B_m, I_{\varphi_{t,k}}$ and $P_m$ be defined as above. Then
$$
B_m\subset I_{\varphi_{t,k}}\subset P_m.
$$
\end{proposition}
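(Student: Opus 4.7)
\begin{preuve}
The plan is to verify the two inclusions separately via elementary comparisons between the $\ell^2$, $\ell^{2k}$, and $\ell^\infty$ norms on $\mathbb{C}^m$, applied to the coordinates of $v=(v^1,\dots,v^m)$. Both $B_m$ and $P_m$ correspond to level sets of such norms (the Euclidean norm and the supremum norm respectively), and $\varphi_{t,k}$ is a weighted convex combination (inside the square) of the Euclidean-squared and the $\ell^{2k}$-squared expressions, so it sits sandwiched between these two norms.

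For the inclusion $B_m\subset I_{\varphi_{t,k}}$, I will take $v\in B_m$, so $\sum_{l=1}^m|v^l|^2<1$, and apply the standard inequality $\sqrt[k]{\sum_{l=1}^m a_l^k}\leq \sum_{l=1}^m a_l$ for nonnegative reals $a_l$ and integer $k\geq 1$ (that is, $\|\cdot\|_{\ell^k}\leq\|\cdot\|_{\ell^1}$) with $a_l=|v^l|^2$. This yields
\begin{equation*}
\varphi_{t,k}^2(v)=\frac{1}{1+t}\left\{\sum_{l=1}^m|v^l|^2+t\sqrt[k]{\sum_{l=1}^m|v^l|^{2k}}\right\}\leq \frac{1+t}{1+t}\sum_{l=1}^m|v^l|^2<1,
\end{equation*}
so $v\in I_{\varphi_{t,k}}(0)$.

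For the inclusion $I_{\varphi_{t,k}}\subset P_m$, I will argue by contrapositive. Suppose $v\notin P_m$, so there exists $l_0$ with $|v^{l_0}|\geq 1$. Then both
\begin{equation*}
\sum_{l=1}^m|v^l|^2\geq |v^{l_0}|^2\geq 1\qquad\text{and}\qquad \sqrt[k]{\sum_{l=1}^m|v^l|^{2k}}\geq \sqrt[k]{|v^{l_0}|^{2k}}=|v^{l_0}|^2\geq 1,
\end{equation*}
so $\varphi_{t,k}^2(v)\geq\frac{1+t}{1+t}=1$, hence $v\notin I_{\varphi_{t,k}}(0)$.

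There is essentially no obstacle here: the whole content is to recognise that $\varphi_{t,k}$ is pinched between the Euclidean norm and the supremum norm on $\mathbb{C}^m$, a fact that is immediate from the elementary inequalities $\max_l|v^l|\leq \|v\|_{\ell^{2k}}\leq \|v\|_{\ell^2}$ for $k\geq 1$. No appeal to the Finsler structure, to $\mathrm{Aut}(P_m)$-invariance, or to the differential-geometric results developed earlier in the paper is required.
\end{preuve}
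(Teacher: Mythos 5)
Your proof is correct: both inclusions follow exactly from the elementary comparisons $\sqrt[k]{\sum_l |v^l|^{2k}}\leq\sum_l|v^l|^2$ and $\sqrt[k]{\sum_l |v^l|^{2k}}\geq\max_l|v^l|^2$, which is precisely the argument the paper has in mind when it states the proposition as "obtained immediately" without writing out a proof. Nothing further is needed; in particular you are right that no invariance or Finsler-geometric machinery enters.
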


\begin{theorem}\label{th4} Let $f:P_m\rightarrow P_n$ be a holomorphic mapping from the unit polydisc $P_m$ into the unit polydisc $P_n$ satisfying $f(0)=0 $. Then
\begin{equation}
\tilde{\varphi}_{\tilde{t},\tilde{k}}^2\big(f(z)\big)\leq\frac{n+\tilde{t}\sqrt[\tilde{k}]{n}}{1+\tilde{t}}\varphi_{t,k}^2(z).
\label{4.1}
\end{equation}
\end{theorem}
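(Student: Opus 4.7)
The plan is to reduce the theorem to a componentwise one-variable Schwarz bound and then to two elementary norm estimates.

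First I would introduce the Minkowski functional of the source polydisc, $p(z) = \max_{1 \leq l \leq m} |z^l|$, which is strictly less than $1$ on $P_m$. For a fixed $z \in P_m \setminus \{0\}$, I define the auxiliary holomorphic functions
\[
\phi_j(\lambda) := f_j\!\left(\tfrac{\lambda}{p(z)} z\right), \qquad j=1,\ldots,n,
\]
on the unit disc $\triangle$. Since $|\lambda z^l / p(z)| \leq |\lambda|$ for each $l$, the argument stays in $P_m$ whenever $|\lambda|<1$, and each component $f_j$ maps into $\triangle$ because $f(P_m) \subset P_n$. Together with $\phi_j(0) = f_j(0) = 0$, the classical one-variable Schwarz lemma gives $|\phi_j(\lambda)| \leq |\lambda|$, and evaluating at $\lambda = p(z)$ yields the key pointwise bound
\[
|f_j(z)| \leq p(z), \qquad j=1,\ldots,n.
\]

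Next I would feed this into the defining formula for $\tilde{\varphi}_{\tilde{t},\tilde{k}}^2$: since both $w \mapsto \sum_j |w^j|^2$ and $w \mapsto \sqrt[\tilde{k}]{\sum_j |w^j|^{2\tilde{k}}}$ are monotone in each $|w^j|$, substituting $|f_j(z)| \leq p(z)$ gives
\[
\tilde{\varphi}_{\tilde{t},\tilde{k}}^2\!\big(f(z)\big)
\leq \frac{1}{1+\tilde{t}}\!\left\{\sum_{j=1}^{n} p(z)^2 + \tilde{t}\,\sqrt[\tilde{k}]{\sum_{j=1}^{n} p(z)^{2\tilde{k}}}\right\}
= \frac{n+\tilde{t}\sqrt[\tilde{k}]{n}}{1+\tilde{t}}\, p(z)^2.
\]

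Finally I would prove the comparison $p(z)^2 \leq \varphi_{t,k}^2(z)$. This follows from the two trivial inequalities $\sum_{l=1}^{m}|z^l|^2 \geq p(z)^2$ and $\sqrt[k]{\sum_{l=1}^{m}|z^l|^{2k}} \geq \sqrt[k]{p(z)^{2k}} = p(z)^2$, which combine to give
\[
\varphi_{t,k}^2(z) = \frac{1}{1+t}\!\left\{\sum_{l=1}^{m}|z^l|^2 + t\sqrt[k]{\sum_{l=1}^{m}|z^l|^{2k}}\right\} \geq \frac{1+t}{1+t}\, p(z)^2 = p(z)^2.
\]
Chaining the two estimates produces \eqref{4.1}. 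There is really no hard step here: the only place care is needed is in verifying that the auxiliary map $\phi_j$ lands in $\triangle$ so that the one-variable Schwarz lemma applies, and in being careful that the monotonicity substitution into $\tilde{\varphi}_{\tilde{t},\tilde{k}}^2$ is valid (it is, because each summand depends only on $|f_j(z)|$). The case $z = 0$ is trivial since $f(0)=0$ forces both sides of \eqref{4.1} to vanish.
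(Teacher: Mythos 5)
Your proof is correct and is essentially the paper's own argument: the paper also slices along the ray $\zeta\mapsto f_j(\zeta z/\lambda_0)$ with $\lambda_0=\max_l|z^l|$, applies the one-variable Schwarz lemma to get $|f_j(z)|\leq\lambda_0$, and then uses exactly the same two elementary estimates for $\tilde{\varphi}_{\tilde{t},\tilde{k}}^2$ and $\varphi_{t,k}^2$. The only cosmetic difference is that the paper keeps track of the vanishing order $N_j$ of each component (which it needs later for Corollary \ref{c-5}), while you only use the crude bound $|f_j(z)|\leq p(z)$, which suffices here.
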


\begin{proof}
Let $f(z)=\big(f_1(z),\cdots,f_n(z)\big)$ be a holomorphic mapping from $P_{m}$ to $P_n$. For any fixed $z_0=(z^1_0,\cdots,z^m_0)\in P_m\setminus\{0\}$, we set
$$\hat{z}_0=\frac{1}{\lambda_0}(z^1_0,\cdots,z^m_0),$$
where $\lambda_0=\displaystyle\max_{1\leq l\leq m}\big\{|z^l_0|\big\}\in(0,1)$.

Since $f_j(z)$ is holomorphic with respect to $z\in P_m$, $g_{j}(\zeta):=f_{j}(\zeta \hat{z}_0)$ is holomorphic  with respect to $\zeta$ from $\triangle$ into itself and $g_j(0)=f_j(0)=0$ for  $j=1,\cdots,n$. Thus $g_j$ has a Taylor expansion at $\zeta=0$, that is
\begin{equation}
	g_{j}(\zeta)=\sum_{i=N_j}^{\infty} b_{ji}\zeta^{i}\quad \mbox{with}\quad b_{ji}=\frac{g_j^{(i)}(0)}{i!}, \quad j=1, \cdots, n,\label{4.2}
\end{equation}
where $N_j\geq 1$ is the zero order of  $g_j$ at $\zeta=0$. 
Using the classical Schwarz lemma of one complex variable, it follows that
\begin{equation}
	|g_j(\zeta)|\leq |\zeta|^{N_j},\quad 
	 j=1,\cdots,n,\forall \zeta\in \triangle.\label{gz0}
\end{equation}
Note that
\begin{equation}
g_j(\lambda_0)=f_j(\lambda_0\hat{z}_0)=f_j(z_0),
\end{equation}
which together with \eqref{gz0} yields
\begin{equation}
	|f_j(z_0)|^2\leq \lambda_0^{2N_j}\leq\lambda_0^2,\quad \forall j=1, \cdots, n.\label{fz0}
\end{equation}
Therefore by \eqref{fz0} we have
\begin{eqnarray*}
	\tilde{\varphi}_{\tilde{t},\tilde{k}}^2\big(f(z_0)\big)
	 &=&\frac{1}{1+\tilde{t}}\left\{\sum_{j=1}^n|f_j(z_0)|^2+\tilde{t}\sqrt[\tilde{k}]{\sum_{j=1}^n|f_j(z_0)|^{2\tilde{k}}}\right\}\\
	&\leq&\frac{1}{1+\tilde{t}}\left\{\sum_{j=1}^n\lambda_0^2
	+\tilde{t}\sqrt[\tilde{k}]{\sum_{j=1}^n\lambda_0^{2\tilde{k}}}\right\}\\
	&=&\frac{n+\tilde{t}\sqrt[\tilde{k}]{n}}{1+\tilde{t}}\lambda_0^2.
\end{eqnarray*}
On the other hand, we have
\begin{eqnarray*}
\varphi_{t,k}^2(z_0)
&=&\frac{1}{1+t}\left\{\sum_{l=1}^{m}|z^{l}_0|^2+t\sqrt[k]{\sum_{l=1}^{m}|z^{l}_0|^{2k}} \right\}\\
&\geq&\frac{1}{1+t}\left\{\lambda_0^2+t\lambda_0^2\right\}\\
&=&\lambda_0^2.
\end{eqnarray*}
Therefore
$$
\tilde{\varphi}_{\tilde{t},\tilde{k}}^2\big(f(z_0)\big)\leq \frac{n+\tilde{t}\sqrt[\tilde{k}]{n}}{1+\tilde{t}}\varphi_{t,k}^{2}(z_0).
$$
It is clear that the above inequality  holds whenever $z_0=0$, hence holds for any $z\in P_m$.

To see that the constant $\frac{n+\tilde{t}\sqrt[\tilde{k}]{n}}{1+\tilde{t}}$ is optimal. Take $w=f_0(z)=\big(z^1,\cdots,z^1\big)$ and $z_0=(z_0^1, \underbrace{0, \cdots, 0}_{m-1}) \in P_m$ for any $|z_0^1|<1$. Then it is easy to check that
$$
\tilde{\varphi}_{\tilde{t},\tilde{k}}^2\big(f_{0}(z_0)\big)
=\frac{n+\tilde{t}\sqrt[\tilde{k}]{n}}{1+\tilde{t}}|z_0^1|^2=\frac{n+\tilde{t}\sqrt[\tilde{k}]{n}}{1+\tilde{t}}\varphi_{t,k}^2(z_0).
$$
This completes the proof.
\end{proof}

\begin{corollary}\label{c-5} Let $f:P_m\rightarrow P_n$ be a holomorphic mapping satisfying $f(\lambda z)=\lambda^Nf(z)$ for any $\lambda\in\triangle$ and an integer $N\geq 1$. Then
\begin{equation}
\tilde{\varphi}_{\tilde{t},\tilde{k}}^2\big(f(z)\big)\leq\frac{n+\tilde{t}\sqrt[\tilde{k}]{n}}{1+\tilde{t}}\varphi_{t,k}^{2N}(z).
\end{equation}
\end{corollary}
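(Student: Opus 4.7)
The plan is to adapt the proof of Theorem \ref{th4}, using the homogeneity hypothesis to upgrade the vanishing order at $\zeta=0$ of the one-variable restriction of $f$ from $1$ to $N$. First I would fix $z_0\in P_m\setminus\{0\}$, set $\lambda_0=\max_{1\leq l\leq m}|z_0^l|\in(0,1)$ and $\hat{z}_0=z_0/\lambda_0$, exactly as in Theorem \ref{th4}. Since $|\zeta\hat{z}_0^l|=|\zeta||z_0^l|/\lambda_0\leq|\zeta|<1$ for every $\zeta\in\triangle$, the functions
\[
g_j(\zeta):=f_j(\zeta\hat{z}_0),\qquad j=1,\ldots,n,
\]
are holomorphic maps of $\triangle$ into itself.

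The new ingredient is to read off the homogeneity relation $f(\lambda z)=\lambda^Nf(z)$ along the ray through $\hat{z}_0$. Applied with $z=\eta\hat{z}_0$ (for any $|\eta|<1$) and $\lambda\in\triangle$ it yields $g_j(\lambda\eta)=\lambda^N g_j(\eta)$, which forces $g_j(\zeta)/\zeta^N$ to be independent of $\zeta$. Hence $g_j(\zeta)=C_j\zeta^N$ for some constant $C_j\in\mathbb{C}$, and the bound $|g_j|<1$ on $\triangle$ forces $|C_j|\leq 1$, so $|g_j(\zeta)|\leq|\zeta|^N$ on $\triangle$. Setting $\zeta=\lambda_0$ and using $g_j(\lambda_0)=f_j(z_0)$ gives the pointwise estimate $|f_j(z_0)|\leq\lambda_0^N$ for each $j$.

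Substituting this into the definition of $\tilde{\varphi}_{\tilde{t},\tilde{k}}^2$ at $f(z_0)$ produces
\[
\tilde{\varphi}_{\tilde{t},\tilde{k}}^2\big(f(z_0)\big)\leq\frac{1}{1+\tilde{t}}\left\{\sum_{j=1}^n\lambda_0^{2N}+\tilde{t}\sqrt[\tilde{k}]{\sum_{j=1}^n\lambda_0^{2N\tilde{k}}}\right\}=\frac{n+\tilde{t}\sqrt[\tilde{k}]{n}}{1+\tilde{t}}\lambda_0^{2N}.
\]
The elementary inequality $\varphi_{t,k}^2(z_0)\geq\lambda_0^2$ already established in the proof of Theorem \ref{th4} then gives $\varphi_{t,k}^{2N}(z_0)\geq\lambda_0^{2N}$, and combining the two bounds completes the estimate at $z_0$; the case $z_0=0$ is immediate since homogeneity forces $f(0)=0$. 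I do not foresee any real obstacle: the only substantive point beyond Theorem \ref{th4} is that homogeneity promotes the vanishing order of each $g_j$ from $1$ to $N$, replacing $\lambda_0^2$ by $\lambda_0^{2N}$ in the final estimate.
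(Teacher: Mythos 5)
Your proposal is correct and follows essentially the same route as the paper: both reduce to the one-variable restrictions $g_j(\zeta)=f_j(\zeta\hat{z}_0)$, use the homogeneity $g_j(\lambda\eta)=\lambda^N g_j(\eta)$ to upgrade the vanishing order at $\zeta=0$ to at least $N$ (hence $|f_j(z_0)|\leq\lambda_0^N$), and then combine with $\varphi_{t,k}^2(z_0)\geq\lambda_0^2$ as in Theorem \ref{th4}. The only cosmetic difference is that you pin down $g_j(\zeta)=C_j\zeta^N$ with $|C_j|\leq 1$ exactly, whereas the paper just notes $N_j\geq N$ and reuses the Schwarz estimate \eqref{gz0}.
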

\begin{proof}By our assumption and the proof of Theorem \ref{th4}, it follows that $f(0)=0 $ and $g_j(\lambda\zeta)=\lambda^Ng_j(\zeta)$ for any $\lambda,\zeta\in\triangle$.
Thus $N_j\geq N$ for $j=1,\cdots,n$, and the assertion follows.
\end{proof}

\begin{proposition}\label{c-4} Let $f:P_m\rightarrow P_n$ be a holomorphic mapping satisfying $f(0)=0.$ Then for any fixed $l$~($l=1,\cdots,m$), the following assertions hold:
	
(1)The  equality
\begin{eqnarray}
\big(f^\ast\tilde{\varphi}^2_{\tilde{t},\tilde{k}}\big)(z_0)=\frac{n+\tilde{t}\sqrt[\tilde{k}]{n}}{1+\tilde{t}}\varphi_{t,k}^2(z_0),
\quad \forall 0\neq z_0=\big(\underbrace{0, \cdots, 0}_{l-1}, z_0^l, 0, \cdots, 0\big)\in P_m
\label{b-cb}
\end{eqnarray}	
holds
if and only if
\begin{eqnarray}
f_j\big(\underbrace{0, \cdots, 0}_{l-1}, z^l, 0, \cdots, 0\big)=e^{\sqrt{-1}\theta_j}z^l, \quad \theta_j\in \mathbb{R}, j=1, \cdots, n;	\label{b-cc}
\end{eqnarray}

(2) Suppose furthermore $f(\lambda z)=\lambda^Nf(z)$ for any $\lambda\in\triangle,z\in P_m$ and an integer $N\geq 1$, then
\begin{eqnarray}
\big(f^\ast\tilde{\varphi}^2_{\tilde{t},\tilde{k}}\big)(z_0)=\frac{n+\tilde{t}\sqrt[\tilde{k}]{n}}{1+\tilde{t}}\varphi_{t,k}^{2N}(z_0),\quad
\forall 0\neq z_0=\big(\underbrace{0, \cdots, 0}_{l-1}, z_0^l, 0, \cdots, 0\big)\in P_m
\label{b-cd}
\end{eqnarray}	
holds
if and only if
\begin{eqnarray}
f_j\big(\underbrace{0, \cdots, 0}_{l-1}, z^l, 0, \cdots, 0\big)=e^{\sqrt{-1}\theta_j}\big(z^l\big)^N, \quad \theta_j\in \mathbb{R}, j=1, \cdots, n. \label{b-ce}
\end{eqnarray}
\end{proposition}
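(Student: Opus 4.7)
The plan is to mimic closely the proof of Theorem \ref{th4}, tracking when the chain of inequalities there becomes equalities. The key initial observation is that when $z_0 = (0,\dots,0,z_0^l,0,\dots,0)$ has only one nonzero coordinate, the quantity $\lambda_0 := \max_{1\leq l\leq m}|z_0^l|$ equals $|z_0^l|$, and
$$\varphi_{t,k}^2(z_0)=\frac{1}{1+t}\bigl(|z_0^l|^2+t\sqrt[k]{|z_0^l|^{2k}}\bigr)=|z_0^l|^2=\lambda_0^2,$$
so the lower bound $\varphi_{t,k}^2(z_0)\geq \lambda_0^2$ used in the proof of Theorem \ref{th4} is already an equality. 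Hence equality in \eqref{b-cb} is equivalent to equality in the upper bound on $\tilde\varphi_{\tilde t,\tilde k}^2(f(z_0))$ established there.

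To handle part (1), I would set up the same auxiliary one-variable maps: writing $z_0^l=\lambda_0 e^{\sqrt{-1}\alpha}$, put $\hat z_0=(0,\dots,0,e^{\sqrt{-1}\alpha},0,\dots,0)$ and $g_j(\zeta):=f_j(\zeta\hat z_0)$. Each $g_j:\triangle\to\triangle$ satisfies $g_j(0)=0$, so the classical one-variable Schwarz lemma gives $|g_j(\zeta)|\leq|\zeta|^{N_j}\leq|\zeta|$, hence $|f_j(z_0)|=|g_j(\lambda_0)|\leq\lambda_0$. Substituting into the definition of $\tilde\varphi_{\tilde t,\tilde k}^2$ as in the proof of Theorem \ref{th4} yields
$$\tilde\varphi_{\tilde t,\tilde k}^2\bigl(f(z_0)\bigr)\leq\frac{n+\tilde t\sqrt[\tilde k]{n}}{1+\tilde t}\lambda_0^2.$$
Equality in this chain forces $|f_j(z_0)|=\lambda_0$ for \emph{every} $j=1,\dots,n$, i.e.\ $|g_j(\lambda_0)|=\lambda_0$. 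The equality case of the classical Schwarz lemma then gives $g_j(\zeta)=e^{\sqrt{-1}\vartheta_j}\zeta$; substituting $z^l=\zeta e^{\sqrt{-1}\alpha}$ and absorbing $e^{-\sqrt{-1}\alpha}$ into the phase yields \eqref{b-cc}. The converse is a direct computation already carried out in the sufficiency part of Proposition \ref{prop1}.

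For part (2), the extra homogeneity $f(\lambda z)=\lambda^N f(z)$ forces $g_j(\lambda\zeta)=\lambda^N g_j(\zeta)$ for all $\lambda,\zeta\in\triangle$, so $g_j(\zeta)=c_j\zeta^N$ for some constant $c_j$, and $|c_j|\leq 1$ by the maximum modulus principle applied on $\triangle$. Then $|f_j(z_0)|=|c_j|\lambda_0^N\leq\lambda_0^N$, and repeating the packaging argument from part (1) (with $\lambda_0^2$ replaced by $\lambda_0^{2N}=\varphi_{t,k}^{2N}(z_0)$) yields the bound in \eqref{b-cd}. Equality here forces $|c_j|=1$ for every $j$, so $c_j=e^{\sqrt{-1}\vartheta_j}$; substituting $z^l=\zeta e^{\sqrt{-1}\alpha}$ and absorbing the resulting factor $e^{-\sqrt{-1}N\alpha}$ into the phase gives \eqref{b-ce}. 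The converse is again a one-line verification.

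I expect no real obstacle beyond bookkeeping: the arithmetic of the phases $e^{\sqrt{-1}\alpha}$ introduced by the normalization $\hat z_0=z_0/\lambda_0$ must be carried through cleanly so that the final phases $e^{\sqrt{-1}\theta_j}$ in \eqref{b-cc} and \eqref{b-ce} genuinely absorb $e^{-\sqrt{-1}\alpha}$ (respectively $e^{-\sqrt{-1}N\alpha}$). Apart from that, the entire argument is the equality-case refinement of the proof of Theorem \ref{th4}, relying only on the one-variable Schwarz lemma (and its $N$-th order strengthening via $g_j(\zeta)=c_j\zeta^N$) together with the fact that $\varphi_{t,k}^2(z_0)=\lambda_0^2$ on axis-aligned vectors.
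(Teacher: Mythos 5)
Your proposal is correct and follows essentially the same route as the paper's own proof: restrict to the $l$-th coordinate axis, note $\varphi_{t,k}^2(z_0)=\lambda_0^2$ there so the lower-bound step of Theorem \ref{th4} is already an equality, and then force equality in the upper bound on $\tilde\varphi^2_{\tilde t,\tilde k}(f(z_0))$, which pins down each $|f_j(z_0)|$ and reduces to the equality case of the one-variable Schwarz lemma, with the converse being the same direct computation the paper performs. The only cosmetic difference is in part (2), where you obtain $g_j(\zeta)=c_j\zeta^N$ directly by comparing Taylor coefficients under the homogeneity $g_j(\lambda\zeta)=\lambda^N g_j(\zeta)$ and then force $|c_j|=1$, whereas the paper shows the vanishing order of $h_j$ at $0$ is at least $N$ and invokes the higher-order Schwarz lemma's equality case; the two arguments are equivalent in substance.
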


\begin{proof}
(1) First, let's show the necessity. Using \eqref{fz0}, we have
\begin{eqnarray}
\big(f^\ast\tilde{\varphi}^2_{\tilde{t},\tilde{k}}\big)(z_0)
&=&\frac{1}{1+\tilde{t}}\left\{\sum_{j=1}^{n} \left|f_j(z_0)\right|^{2}+\tilde{t}\sqrt[\tilde{k}]{\sum_{j=1}^{n} \left|f_j(z_0)\right|^{2\tilde{k}}}\right\}\nonumber\\
&\leq& \frac{1}{1+\tilde{t}}\left\{\sum_{j=1}^{n}\lambda_0^{2N_j}+\tilde{t}\sqrt[\tilde{k}]{\sum_{j=1}^{n} \lambda_0^{2\tilde{k}N_j}}\right\}\nonumber\\
&\leq&\frac{n+\tilde{t}\sqrt[\tilde{k}]{n}}{1+\tilde{t}}\lambda_0^2\nonumber\\
&\leq& \frac{n+\tilde{t}\sqrt[\tilde{k}]{n}}{1+\tilde{t}}\varphi^2_{t,k}(z_0).\label{a-ra}
\end{eqnarray}
By \eqref{b-cb} and \eqref{a-ra}, we have
$$\left|f_j(z_0)\right|=|z_0^l|^{N_j}=|z_0^l|,\label{a-rc}$$
thus we have $ N_j=1.$ By the classical Schwarz lemma of one complex variable, we have
$$f_j\big(\underbrace{0, \cdots, 0}_{l-1}, z^l, 0, \cdots, 0\big)=e^{\sqrt{-1}\theta_j}z^l, \quad \theta_j\in \mathbb{R}, j=1, \cdots, n.$$
	
Next, we show the sufficiency. If $f_j\big(\underbrace{0, \cdots, 0}_{l-1}, z^l, 0, \cdots, 0\big)=e^{\sqrt{-1}\theta_j}z^l, \theta_j\in \mathbb{R}, j=1, \cdots, n$ hold. Taking $z_0=\big(\underbrace{0, \cdots, 0}_{l-1}, z_0^l, 0, \cdots, 0\big)\neq0,$ then we have $$f(z_0)=\big(e^{\sqrt{-1}\theta_1}z_0^l, \cdots, e^{\sqrt{-1}\theta_n}z_0^l\big)$$
and
$$
\big(f^\ast\tilde{\varphi}^2_{\tilde{t},\tilde{k}}\big)(z_0)
=\frac{|z_0^l|^{2}}{1+\tilde{t}}\left\{\sum_{j=1}^{n}\big|e^{\sqrt{-1}\theta_j}\big|^{2}+\tilde{t}\sqrt[\tilde{k}]{\sum_{j=1}^{n} \big|e^{\sqrt{-1}\theta_j}\big|^{2\tilde{k}}}\right\}
=\frac{n+\tilde{t}\sqrt[\tilde{k}]{n}}{1+\tilde{t}}|z_0^l|^{2}, \quad \varphi_{t,k}^2(z_0)=|z_0^l|^{2}.
$$
Consequently
$$\big(f^\ast\tilde{\varphi}^2_{\tilde{t},\tilde{k}}\big)(z_0)=\frac{n+\tilde{t}\sqrt[\tilde{k}]{n}}{1+\tilde{t}}\varphi_{t,k}^2(z_0).$$
This completes the proof of assertion (1).
	
(2) First, we show the necessity. By our assumption and Corollary \ref{c-5}, we have $f(0)=0$ and $g_j(\lambda\zeta)=\lambda^Ng_j(\zeta)$ for any $\lambda,\zeta\in\triangle$.
Thus $N_j\geq N$ for $j=1,\cdots,n$, and \eqref{a-ra} could be replaced by
\begin{eqnarray}
\big(f^\ast\tilde{\varphi}^2_{\tilde{t},\tilde{k}}\big)(z_0)
&=&\frac{1}{1+\tilde{t}}\left\{\sum_{j=1}^{n} \left|f_j(z_0)\right|^{2}+\tilde{t} \sqrt[\tilde{k}]{\sum_{j=1}^{n} \left|f_j(z_0)\right|^{2\tilde{k}}}\right\}\nonumber\\
&\leq& \frac{1}{1+\tilde{t}}\left\{\sum_{j=1}^{n}\max_{1\leq i\leq m}\big\{|z_0^i|^{2N}\big\}+\tilde{t}\sqrt[\tilde{k}]{\sum_{j=1}^{n} \max_{1\leq i\leq m}\big\{|z_0^i|^{2\tilde{k}N}\big\}}\right\}\nonumber\\
&=& \frac{1}{1+\tilde{t}}\left\{\sum_{j=1}^{n}|z_0^l|^{2N}+\tilde{t}\sqrt[\tilde{k}]{\sum_{j=1}^{n} |z_0^l|^{2\tilde{k}N}}\right\}\nonumber\\
&=&\frac{n+\tilde{t}\sqrt[\tilde{k}]{n}}{1+\tilde{t}}|z_0^l|^{2N}.\label{a-rb}
\end{eqnarray}
By \eqref{b-cd} and \eqref{a-rb}, we have
$$\left|f_j(z_0)\right|=|z_0^l|^{N},\label{a-rd}$$
Denote $h_j\big(z^l\big):=f_j\big(\underbrace{0, \cdots, 0}_{l-1}, z^l, 0, \cdots, 0\big),$ then $\left|h_j(z^l_0)\right|=|z_0^l|^{N}.$ Since $h_j\big(\lambda z^{l}\big)=\lambda^{N}h_j\big(z^l\big)$, by taking $i$-th order derivatives of $h_j(\lambda z^l)$ with respect to $\lambda$, $i=0,\cdots,N-1,$ and setting $\lambda=0,$ we have $h_j(0)=h'_j(0)=\cdots=h^{(N-1)}_j(0)=0$. By the classical Schwarz lemma of one complex variable, we have
$$f_j\big(\underbrace{0, \cdots, 0}_{l-1}, z^l, 0, \cdots, 0\big)=h_j(z^l)=e^{\sqrt{-1}\theta_j}\big(z^l\big)^N, \quad \theta_j\in \mathbb{R}, j=1, \cdots, n.$$
	
Next, we show the sufficiency. If $f_j\big(\underbrace{0, \cdots, 0}_{l-1}, z^l, 0, \cdots, 0\big)=e^{\sqrt{-1}\theta_j}(z^l)^N, \theta_j\in \mathbb{R}, j=1, \cdots, n$ hold. Taking $z_0=\big(\underbrace{0, \cdots, 0}_{l-1}, z_0^l, 0, \cdots, 0\big)\neq0,$ then we have $$f(z_0)=\Big(e^{\sqrt{-1}\theta_1}(z_0^l)^N, \cdots, e^{\sqrt{-1}\theta_n}(z_0^l)^N\Big)$$
and
$$
\big(f^\ast\tilde{\varphi}^2_{\tilde{t},\tilde{k}}\big)(z_0)
=\frac{|z_0^l|^{2N}}{1+\tilde{t}}\left\{\sum_{j=1}^{n}\big|e^{\sqrt{-1}\theta_j}\big|^{2}+\tilde{t}\sqrt[\tilde{k}]{\sum_{j=1}^{n} \big|e^{\sqrt{-1}\theta_j}\big|^{2\tilde{k}}}\right\}
=\frac{n+\tilde{t}\sqrt[\tilde{k}]{n}}{1+\tilde{t}}|z_0^l|^{2N}, \quad \varphi_{t,k}^2(z_0)=|z_0^l|^{2}.
$$
Consequently
$$\big(f^\ast\tilde{\varphi}^2_{\tilde{t},\tilde{k}}\big)(z_0)=\frac{n+\tilde{t}\sqrt[\tilde{k}]{n}}{1+\tilde{t}}\varphi_{t,k}^{2N}(z_0).$$
This completes the proof.
\end{proof}

\section{Distortion theorem on polydiscs endowed with K\"ahler-Berwald metrics } \label{se4}

The distortion theorem is an important research object in  the classical geometric function theories of one complex variable,
as a corollary of the De Brange theorem,
 for any normalized biholomorphic function $f$ on the unit disk $\Delta$   with $f(0)=0$ and $f^{\prime}(0)=1$, the following assertions hold:

(1)$\frac{1-|z|}{(1+|z|)^{3}} \leqslant\left|f^{\prime}(z)\right| \leqslant \frac{1+|z|}{(1-|z|)^{3}}, \quad \forall z \in \Delta $;

(2)the above inequalities hold at some $z\neq0$ if and only if $f(z)=\frac{z}{(1-e^{i\theta}z)^2}, (\theta\in \mathbb{R}).$

In the case of several complex variables, however, there are many counter examples to show that the distortion theorem does not hold.
For example \cite{Cartan},
$$f(z)=\left(z^{1}, \frac{z^{2}}{\left(1-z^{1}\right)^{n}}\right), \quad z=(z^1,z^2)\in B_2, n \in \mathbb{N}^{+} .$$
It is easy to check that $f$ is biholomorphic, $f(0)=0$ and $f_{\ast}(0)=I_{2}$, i.e., $f$ is a normalized biholomorphic mapping, where $f_{\ast}(0)$ is the Jacobian matrix of $f$ at the origin and $I_{2}$ is the identity matrix. But $\operatorname{det} f_{\ast}(z)$ and $f_{\ast}(z) \overline{f_{\ast}(z)^{\prime}}$ for the holomorphic mappings $f$ have neither finite upper bound nor non-zero lower bound, where $f_{\ast}(z)$ is the Jacobian matrix of $f$ at point $z$. In order to extend the above classical distortion theorem to high dimensions, H. Cartan \cite{Cartan} suggested the study of convex mappings, starlike mappings and some other subclasses of biholomorphic mappings in several complex variables.

By the distortion theorem of K.L\"{o}wner,
if $f(z)=z+\sum\limits_{i=2}^{\infty}a_iz^i$ is a biholomorphic convex function on the unit disk $\Delta$ with $f(0)=0$ and $f^{\prime}(0)=1$, then $|a_i|\leq1,\,i=2, 3, \cdots,$ and $|a_i|=1$ for some $i\geq2$ if and only if $f(z)=\frac{z}{1-e^{i\theta}z}~(\theta\in \mathbb{R}).$ Morever, the following inequalities hold
	
	(1) $ \frac{1}{(1+|z|)^2}\leq|f'(z)|\leq\frac{1}{(1-|z|)^2}$;
	
	(2) the above inequalities hold at some $z\neq0$ if and only if $f(z)=\frac{z}{1-e^{i\theta}z}\; (\theta\in \mathbb{R}).$

In \cite{Gong-W-Y}, S. Gong, S. K. Wang and Q. H. Yu gave the following precise estimations of $f_{\ast}(z) \overline{f_{\ast}(z)}{ }^{\prime}$ for the family of normalized biholomorphic convex mappings on the unit ball $B_{m}$ of $\mathbb{C}^m$.

\begin{theorem}\cite{Gong-W-Y}
Let $f: B_{m} \rightarrow \mathbb{C}^{m}$ be a normalized biholomorphic convex mapping on the unit ball $B_{m}\subset \mathbb{C}^{m}$. Then
\begin{equation}
\left(\frac{1-\|z\|}{1+\|z\|}\right)^2 Q \leq f_{\ast}(z) \overline{f_{\ast}(z)}' \leq\left(\frac{1+\|z\|}{1-\|z\|}\right)^2 Q\label{d-t}
\end{equation}
holds for every $z=\left(z^{1}, \cdots, z^{m}\right) \in B_{m}$, where
\begin{equation}
Q=\left(q_{ij}\right)_{1 \leq i, j \leq m}=\left(\frac{\left(1-\|z\|^{2}\right) \delta_{i j}+\bar{z}_{i} z_{j}}{\left(1-\|z\|^{2}\right)^{2}}\right)_{1 \leq i, j \leq m}\label{Ball}
\end{equation}
is the Hermitian tensor matrix of the Bergman matric of $B_{m}$, and $\|.\|^{2}$ is the canonical complex Euclidean norm in $\mathbb{C}^{m}$. The estimations are precise.
\end{theorem}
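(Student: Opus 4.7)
The plan is to reduce the Hermitian matrix distortion inequality \eqref{d-t} to a family of scalar inequalities along unit vectors, exploit the unitary invariance of $B_{m}$, $Q$, and the class of normalized biholomorphic convex mappings to reduce to a single distinguished base point, and then extract the quantitative bounds from the classical one-variable L\"owner distortion theorem together with the Kikuchi--Matsuno pre-Schwarzian characterization of convex mappings on $B_{m}$.

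Since $f_\ast(z)\overline{f_\ast(z)}^{\,T}$ and $Q$ are positive-definite Hermitian matrices, \eqref{d-t} is equivalent to
$$
\Big(\frac{1-\|z\|}{1+\|z\|}\Big)^{2} v\,Q\,\overline{v}^{\,T} \;\leq\; \|f_\ast(z) v\|^{2} \;\leq\; \Big(\frac{1+\|z\|}{1-\|z\|}\Big)^{2} v\,Q\,\overline{v}^{\,T},\quad \forall\, v\in\mathbb{C}^{m}.
$$
For $z\neq 0$, I would choose a unitary $U\in U(m)\subset\mbox{Aut}(B_{m})$ with $Uz=re_{1}$, $r=\|z\|$. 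The replacement $f\mapsto U^{-1}\circ f\circ U$ preserves both the normalization $f(0)=0,\,f_\ast(0)=I$ and the convexity of the image (linear images of convex sets are convex), and $Q$ is covariant under unitary conjugation ($Q(Uw)=\bar U\,Q(w)\,U^{T}$), so I may assume $z=re_{1}$, at which $Q=\mbox{diag}\bigl(1/(1-r^{2})^{2},\,1/(1-r^{2}),\,\ldots,\,1/(1-r^{2})\bigr)$. Along the tangential direction $v=e_{1}$, composing the slice $\zeta\mapsto f(\zeta e_{1})$ with a linear support functional of the convex domain $f(B_{m})$ produces a normalized univalent convex function on $\Delta$ to which L\"owner's inequality $1/(1+r)^{2}\leq |(\cdot)'(r)|\leq 1/(1-r)^{2}$ applies, giving the correct tangential factor multiplying $1/(1-r^{2})^{2}$. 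For transverse directions $v=e_{j}$, $j\geq 2$, I would invoke the Kikuchi--Matsuno matrix characterization of convexity to bound $\|\partial_{j}f(re_{1})\|^{2}$ by the sharp multiple of $1/(1-r^{2})$.

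The main obstacle is controlling the off-diagonal couplings $\langle \partial_{1}f(re_{1}),\partial_{j}f(re_{1})\rangle$ for $j\geq 2$, which are invisible to single complex-line slices and require the full matrix Herglotz-type representation of the pre-Schwarzian derivative, together with a careful decomposition of an arbitrary test vector $v=v^{(1)}e_{1}+v^{\perp}$ into tangential and transverse parts before reassembling the quadratic form $\|f_\ast(re_{1})v\|^{2}$. Finally, sharpness of both bounds is witnessed by the Roper--Suffridge convex extensions $F(z) = \bigl(z^{1}/(1-e^{i\theta}z^{1}),\,z^{2}/(1-e^{i\theta}z^{1}),\,\ldots,\,z^{m}/(1-e^{i\theta}z^{1})\bigr)$ of the one-variable Koebe convex function $\zeta\mapsto \zeta/(1-e^{i\theta}\zeta)$, evaluated at $z=re_{1}$, which saturate L\"owner's one-variable inequality and hence \eqref{d-t}.
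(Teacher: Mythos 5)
The paper itself offers no proof of this statement: it is quoted verbatim from \cite{Gong-W-Y} as known background, so there is nothing internal to compare your argument with, and it must stand on its own. As it stands it does not. The sound parts are the reduction of the matrix inequality \eqref{d-t} to the scalar inequalities for all test vectors $v$, the unitary normalization $z=re_1$ (the class of normalized convex mappings and $Q$ do behave as you say under $f\mapsto U^{-1}\circ f\circ U$), and the extremal example $F(z)=\bigl(z^1/(1-e^{i\theta}z^1),\,z^2/(1-e^{i\theta}z^1),\ldots\bigr)$, which is indeed a normalized biholomorphic convex mapping of $B_m$ and saturates the constants along the $e_1$ direction. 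The analytic core, however, is missing. First, your tangential step is not valid as stated: composing the slice $\zeta\mapsto f(\zeta e_1)$ with a linear support functional of the convex domain $f(B_m)$ does not produce a normalized univalent convex function on $\Delta$ --- a support functional only yields a half-plane containing the image (a subordination-type statement), and neither univalence, nor convexity, nor the normalization $h(0)=0$, $h'(0)=1$ of the composite is automatic, so L\"owner's two-sided derivative bound cannot be invoked for it. Second, and decisively, the transverse bounds $\|\partial_j f(re_1)\|^2$ and the off-diagonal couplings $\langle\partial_1 f,\partial_j f\rangle$ are exactly where the theorem lives, and you defer them entirely to an uncarried-out appeal to a ``Kikuchi--Matsuno characterization'' and a ``matrix Herglotz-type representation'' without deriving any estimate; consequently neither inequality in \eqref{d-t} is established for a general direction $v$, even at $z=re_1$.

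If you want to complete a proof along classical lines, the standard route (and, to the best of my knowledge, the one behind \cite{Gong-W-Y}) is not slice-by-slice: one renormalizes with a M\"obius automorphism $\varphi_z$ of $B_m$, noting that $g=[\,f_{\ast}(z)\,]^{-1}\bigl(f\circ\varphi_z-f(z)\bigr)$, suitably normalized, is again a normalized biholomorphic convex mapping because $f(\varphi_z(B_m))=f(B_m)$ is convex and affine maps preserve convexity; the two-sided growth theorem for normalized convex mappings of the ball, applied to $g$, is then transferred back through the explicit derivative of $\varphi_z$ at the origin to give the full matrix estimate, with the Bergman tensor $Q$ emerging precisely from $(\varphi_z)_{\ast}$. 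Reworking your outline around that renormalization would remove both gaps at once, since it handles all directions $v$ simultaneously rather than tangential and transverse ones separately.
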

\begin{remark}
Two Hermitian matrices $A\leq B$ in \eqref{d-t} means the difference $A-B$ is a seminegative definite Hermitian matrix. It is clear that \eqref{d-t} is equivalent to the following inequalities
$$
\left(\frac{1-\|z\|}{1+\|z\|}\right)^2Q(z;v)\leq \|f_\ast(z)v\|^2\leq \left(\frac{1+\|z\|}{1-\|z\|}\right)^2Q(z;v),\quad \forall z \in B_m, v \in T_z^{1,0}B_m\cong\mathbb{C}^{m},
$$
where $Q(z;v)$ is the Bergman metric on $B_m$, with $Q$ given by \eqref{Ball}.
\end{remark}
\begin{definition}\cite{Shi}\label{def1}
Let $\Omega$ be a bounded convex circular domain in $\mathbb{C}^m$, define
$$
p(z)=\inf\left\{c>0:\frac{z}{c}\in\Omega\right\},\quad z\in\mathbb{C}^m,
$$
then (i) $p(z)$ is a  complex norm on $\mathbb{C}^m$, called the Minkowski functional of $\Omega$;
(ii) under the complex norm $p(\cdot)$, $\mathbb{C}^m$ is a complex Banach space;
(iii) $\Omega=\{z\in\mathbb{C}^m: p(z)<1\}$,  $\partial\Omega=\{z\in\mathbb{C}^m: p(z)=1\}$.
\end{definition}
It follows from Definition \ref{def1} that the Minkowski functional of the ploydisc $ P_m $ is given by $ p(z)=\max\limits_{1\leq l\leq m}\{|z^l|\} $ for any $z\in \mathbb{C}^m.$

In \cite{Gong-Liu}, S. Gong and T. S. Liu generalized the distortion theorem of biholomorphic convex mappings to bounded circular domains in terms of Carth\'{e}odory metric and Kobayashi metric in $\mathbb{C}^m.$
\begin{theorem}\label{th6}\cite{Gong-Liu}
Let $\Omega \subset \mathbb{C}^{m}$ be a bounded convex circular domain with $0 \in \Omega$, and $p(z)(z \in \Omega)$ be its Minkowski functional. Let $f(z): \Omega \rightarrow \mathbb{C}^{m}$ be a normalized biholomorphic convex mapping on $\Omega$. Then for every $z \in \Omega$ and vector $v\in \mathbb{C}^{m}$, the inequalities
$$
\frac{1-p(z)}{1+p(z)} F^{\Omega}(z; v) \leq p\left(f_{\ast}(z) v\right) \leq \frac{1+p(z)}{1-p(z)} F^{\Omega}(z; v)
$$
hold, where
$$
F^{\Omega}(z; v)=F_{C}^{\Omega}(z; v)=F_{K}^{\Omega}(z; v),
$$
$F_{C}^{\Omega}(z; v)$ and $F_{K}^{\Omega}(z; v)$ are the infinitesimal form of the Carath\'{e}odory metric and the infinitesimal form of the Kobayashi-Royden metric of $\Omega$ respectively.
\end{theorem}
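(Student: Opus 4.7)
\begin{preuve}
The plan is to reduce the multidimensional distortion estimate to the classical L\"owner distortion theorem for normalized biholomorphic convex functions on $\Delta$ (recalled in the text), using Lempert's theorem as the bridge. Lempert's theorem asserts that on any bounded convex domain in $\mathbb{C}^m$, the infinitesimal Carath\'eodory and Kobayashi-Royden metrics coincide; applied to both $\Omega$ and to the convex image $f(\Omega)$, this produces a single invariant metric $F$ on each, with biholomorphic invariance
$$F^{f(\Omega)}\!\bigl(f(z);f_\ast(z)v\bigr)=F^{\Omega}(z;v),\qquad z\in\Omega,\ v\in\mathbb{C}^m.$$
Since $\Omega$ is bounded convex circular with $0\in\Omega$, the radial disc $\zeta\mapsto\zeta w/p(w)$ is an extremal Lempert disc through $0$, which yields the basepoint identity $F^\Omega(0;w)=p(w)$.

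Next, I would reduce to one complex variable by radial slicing. Fix $z_0\in\Omega\setminus\{0\}$ with $r:=p(z_0)$, set $\hat z_0=z_0/r$, and note that $\psi(\zeta):=\zeta\hat z_0$ is a Lempert geodesic $\Delta\to\Omega$ passing through $0$ and $z_0=\psi(r)$. For a continuous $\mathbb{C}$-linear functional $\ell$ on $\mathbb{C}^m$ chosen via Hahn-Banach so that $|\ell(w)|\le p(w)$ for all $w$ and $\ell(\hat z_0)=1$, the composition $h_\ell:=\ell\circ f\circ\psi:\Delta\to\mathbb{C}$ is holomorphic with $h_\ell(0)=0$ and $h_\ell'(0)=1$ (the latter by the normalization $f_\ast(0)=I$). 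Applying the classical distortion theorem for one-variable convex functions would then give
$$\frac{1-r}{1+r}\le (1-r^2)\,|h_\ell'(r)|\le \frac{1+r}{1-r}.$$

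Finally, taking the supremum over the admissible family of $\ell$ translates these one-variable estimates into the multidimensional bound: by the Hahn-Banach duality $\sup_\ell|\ell(w)|=p(w)$, combined with the Lempert identification and the functorial invariance recorded above, one matches $(1-r^2)|h_\ell'(r)|$ (extremized over $\ell$) with $p(f_\ast(z_0)v)/F^\Omega(z_0;v)$, and rearranging yields the two-sided estimate stated in the theorem.

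\textbf{Main obstacle.} The crux is showing that $h_\ell$ is a genuine biholomorphic \emph{convex} function on $\Delta$, to which L\"owner's theorem applies: $f$ being convex biholomorphic on the multidimensional domain $\Omega$ does not a priori force every slice $\ell\circ f$ restricted to a radial complex line to be convex in one variable. Verifying this uses the circular structure of $\Omega$ (which singles out radial slices), a supporting-hyperplane description of $f(\Omega)$ dual to the Minkowski functional of $f(\Omega)$, and the normalization $f_\ast(0)=I$ to pin down the first-order behaviour. Once this one-variable reduction is in place, the remainder is a routine packaging of Lempert's theorem, Hahn-Banach duality, and the one-variable L\"owner distortion estimate.
\end{preuve}
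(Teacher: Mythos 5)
The paper itself gives no proof of this statement: Theorem~\ref{th6} is imported verbatim from \cite{Gong-Liu} as a known result, and the only related argument in the paper is the proof of its polydisc specialization (Theorem~\ref{thdis}), which goes through Suffridge's decomposition theorem --- a route unavailable for a general bounded convex circular $\Omega$. So your proposal can only be judged on its own merits, and the step you yourself flag as ``the main obstacle'' is not merely an obstacle but a fatal gap: the slice $h_\ell=\ell\circ f\circ\psi$ is in general \emph{not} a normalized convex univalent function of one variable, and L\"owner's distortion theorem genuinely fails for it. A concrete counterexample already lives on $\Omega=P_2$: take $f(z)=\left(\frac{z^1}{1-z^1},\frac{z^2}{1+z^2}\right)$, which is a normalized biholomorphic convex mapping of $P_2$ by Suffridge's theorem, let $z_0=(ir,ir)$, $\hat z_0=(i,i)$, $\psi(\zeta)=(i\zeta,i\zeta)$, and $\ell(w)=-\frac{i}{2}(w^1+w^2)$, which satisfies $|\ell(w)|\le p(w)$ and $\ell(\hat z_0)=1$. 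Then $h_\ell(\zeta)=\frac{\zeta}{1+\zeta^2}$, so $|h_\ell'(r)|=\frac{1-r^2}{(1+r^2)^2}\to 0$ as $r\to 1$, while L\"owner's lower bound would force $|h_\ell'(r)|\ge\frac{1}{(1+r)^2}\to\frac14$. Thus the inequality $\frac{1-r}{1+r}\le(1-r^2)|h_\ell'(r)|$ on which your whole estimate rests is false, and no supporting-hyperplane argument can rescue it, because an arbitrary linear image of a convex mapping restricted to a radial line simply need not be convex or even satisfy the distortion bounds.

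There is a second, independent gap: radial slicing only ever sees the derivative of $f$ in the single direction $\hat z_0=z_0/p(z_0)$, since $h_\ell'(r)=\ell\left(f_\ast(z_0)\hat z_0\right)$. Extremizing over $\ell$ therefore recovers at best $p\left(f_\ast(z_0)z_0\right)$, i.e.\ the asserted inequality for $v$ parallel to $z_0$, whereas the theorem claims it for every $v\in\mathbb{C}^m$; your sketch provides no mechanism to decouple the base point from the direction (discs through $z_0$ in other directions are neither complex geodesics nor normalized at the origin, so the framework breaks). The ingredients you use correctly --- Lempert's identity $F_C^\Omega=F_K^\Omega$, biholomorphic invariance, $F^\Omega(0;w)=p(w)$ for a balanced convex domain, and $F^\Omega(\psi(\zeta);\psi'(\zeta))=(1-|\zeta|^2)^{-1}$ along the radial geodesic --- are all standard, but the actual proof in \cite{Gong-Liu} proceeds instead via growth and covering theorems for convex mappings combined with two-sided estimates of the invariant metric of the convex image domain $f(\Omega)$, not via one-variable L\"owner applied to linear slices of $f$.
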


\begin{remark}\label{bi-con}	
In \cite{Su}, T. J. Suffridge obtained the following decomposition theorem: Suppose $f(z)=\left(f_{1}(z), \cdots, f_{m}(z)\right), z\in \mathbb{C}^{m}$ is holomorphic on the polydisc $P_{m}, f(0)=0.$ Then $f(z)$ is a biholomorphic convex mapping if and only if there exists a nonsingular constant matrix $T$ such that $f(z)=\left(\phi_{1}\left(z^{1}\right), \cdots, \phi_{m}\left(z^{m}\right)\right) T$ holds, where $\phi_{l}\left(z^{l}\right)$ is a biholomorphic convex function of one complex variable $z^{l}$ on the unit disc $\Delta_l,l=1,\cdots,m.$
	
Note that if $f$ is a normalized biholomorphic convex mapping, then $f(0)=0$ and $f_{\ast}(0)=I_m$.  Hence we have  $\phi_{l}\left(0\right)=0, l=1, \cdots, m,$ and
$$ T= \left(\begin{array}{ccc}
			\frac{1}{\phi'_1(0)}&\cdots&0\\
		    \vdots&\ddots&\vdots\\
			0&\cdots&\frac{1}{\phi'_m(0)}\end{array}
\right).$$
Then we have $f(z)=\left(f_{1}\left(z\right), \cdots, f_{m}\left(z\right)\right)=\left(\frac{\phi_{1}\left(z^{1}\right)}{\phi'_1(0)}, \cdots, \frac{\phi_{m}\left(z^{m}\right)}{\phi'_m(0)}\right).$
Thus $f(z)$ is a normalized biholomorphic convex mapping if and only if $f_l(z)=f_{l}\left(z^{l}\right)$ is a normalized biholomorphic convex function of one complex variable $z^{l}$ on the unit disc $\Delta_l, l=1, \cdots, m.$		
\end{remark}

In terms of the $\mbox{Aut}(P_m)$-invariant K\"ahler-Berwald metric $F_{t,k}$ given by \eqref{Ftk}, which is not necessary Hermitian quadratic, we are able to obtain the following distortion theorem for the normalized biholomorphic convex mappings on $P_m$.
\begin{theorem}\label{thdis}
Let $P_m$ be the unit polydisc in $\mathbb{C}^m$ endowed with the $\mbox{Aut}(P_m)$-invariant K\"ahler-Berwald metric $F_{t,k}$ given by \eqref{Ftk}, and $f(z)=(f_1(z),\cdots,f_n(z)): P_m \rightarrow \mathbb{C}^{m}$ be a normalized biholomorphic convex mapping.	
Then for every $z \in P_m$ and  $v \in T_z^{1,0}P_m\cong\mathbb{C}^{m}$, the inequalities
\begin{equation}
\left[\frac{1-p(z)}{1+p(z)}\right]^2F_{t,k}^{2}(z;v)
\leq F_{t,k}^2\left(0; f_{\ast}(z) v\right)
\leq \left[\frac{1+p(z)}{1-p(z)}\right]^2F_{t,k}^{2}(z; v)
\label{disto}
\end{equation}
hold, here $p(z)$ is the Minkowski functional of $P_m$.
Moreover, one of the equalities holds at some point $z\neq0$ in \eqref{disto} if and only if $f_l(z)=\frac{z^l}{1-e^{i\theta_l}z^l}$ for some $\theta_l\in \mathbb{R}, l=1,\cdots,m.$
\end{theorem}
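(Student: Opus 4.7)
The plan is to reduce the multivariable distortion statement to $m$ copies of the classical one-variable Löwner distortion theorem, via the Suffridge decomposition recalled in Remark \ref{bi-con}. Concretely, since $f$ is a normalized biholomorphic convex mapping of $P_m$, Remark \ref{bi-con} gives $f(z)=\bigl(f_1(z^1),\ldots,f_m(z^m)\bigr)$ where each $f_l$ is a normalized biholomorphic convex function on $\Delta_l=\Delta$. Consequently the Jacobian $f_\ast(z)$ is the diagonal matrix with entries $f_l'(z^l)$, so $f_\ast(z)v=\bigl(f_1'(z^1)v^1,\ldots,f_m'(z^m)v^m\bigr)$.

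Next I would invoke the classical distortion theorem of Löwner recalled just before Theorem \ref{th6}: for each $l$,
\begin{equation*}
\frac{1}{(1+|z^l|)^2}\leq |f_l'(z^l)|\leq \frac{1}{(1-|z^l|)^2}.
\end{equation*}
Squaring and multiplying by $(1-|z^l|^2)^2=(1-|z^l|)^2(1+|z^l|)^2$ yields the scale-matched estimate
\begin{equation*}
\left[\frac{1-|z^l|}{1+|z^l|}\right]^2\leq |f_l'(z^l)|^2(1-|z^l|^2)^2\leq \left[\frac{1+|z^l|}{1-|z^l|}\right]^2.
\end{equation*}
Since $p(z)=\max_{1\leq l\leq m}|z^l|$ and $x\mapsto\frac{1-x}{1+x}$ is decreasing on $[0,1)$, each $l$ satisfies
\begin{equation*}
\left[\frac{1-p(z)}{1+p(z)}\right]^2\leq |f_l'(z^l)|^2(1-|z^l|^2)^2\leq \left[\frac{1+p(z)}{1-p(z)}\right]^2.
\end{equation*}
Equivalently, $\bigl[\frac{1-p(z)}{1+p(z)}\bigr]^2\frac{|v^l|^2}{(1-|z^l|^2)^2}\leq |f_l'(z^l)|^2|v^l|^2\leq \bigl[\frac{1+p(z)}{1-p(z)}\bigr]^2\frac{|v^l|^2}{(1-|z^l|^2)^2}$ for every $l$.

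Now I would plug $f_\ast(z)v$ into the formula \eqref{nftk} for $F_{t,k}^2(0;\cdot)$. Because the estimate above is a coordinate-wise scalar bound with the same constant $\bigl[\frac{1\pm p(z)}{1\mp p(z)}\bigr]^2$ for each $l$, it factors through both the $\ell^2$ sum and the $\ell^{2k}$ sum appearing inside the $\sqrt[k]{\,\cdot\,}$. Comparing term by term with $F_{t,k}^2(z;v)$ in \eqref{Ftk} then yields \eqref{disto} directly, since the constant pulls outside of both sums and out of the $k$-th root.

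For the equality discussion, the main thing to watch is that if either inequality in \eqref{disto} becomes an equality at some $z\neq0$ and some $v$, then I must have equality $|f_l'(z^l)|^2(1-|z^l|^2)^2=\bigl[\frac{1\pm p(z)}{1\mp p(z)}\bigr]^2$ for every index $l$ with $v^l\neq 0$, and moreover the slack $|z^l|\leq p(z)$ must be saturated for those $l$. By varying $v$ freely, the equality must hold for all $l\in\{1,\dots,m\}$, which forces the equality case in the classical Löwner distortion theorem for each $f_l$. That equality case pins down $f_l(z^l)=\frac{z^l}{1-e^{i\theta_l}z^l}$ for some $\theta_l\in\mathbb{R}$. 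Conversely, a direct substitution of these Möbius-type functions into both sides of \eqref{disto} gives equality, so the characterization holds. The main obstacle I anticipate is bookkeeping in the equality analysis: since $p(z)$ is realized only on the coordinates where $|z^l|=p(z)$, one has to argue by varying $v$ (and, for coordinates with $|z^l|<p(z)$, using arbitrarily small $v^l$) to conclude that \emph{every} $f_l$ must attain the extremal form, not just those on which $p(z)$ is realized.
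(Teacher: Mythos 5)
Your proposal follows essentially the same route as the paper's own proof: Suffridge's decomposition puts $f$ in diagonal form, Löwner's one-variable distortion theorem gives the coordinate-wise estimate $\bigl[\frac{1-p(z)}{1+p(z)}\bigr]^2\le |f_l'(z^l)|^2(1-|z^l|^2)^2\le\bigl[\frac{1+p(z)}{1-p(z)}\bigr]^2$, and the common constant passes through both the $\ell^2$ sum and the $k$-th root, with the equality case reduced to Löwner's equality case, exactly as in the paper. The only point to tighten is the converse of the equality statement: for $f_l(z^l)=\frac{z^l}{1-e^{i\theta_l}z^l}$, ``direct substitution'' does not give equality in \eqref{disto} at an arbitrary $z\neq0$, but only at specially aligned points, which is how the paper proceeds by exhibiting $z_0=(be^{-i\theta_1},\cdots,be^{-i\theta_m})$ with $b\in\mathbb{R^+}$ (resp. $\tilde z_0=(-ae^{-i\theta_1},\cdots,-ae^{-i\theta_m})$) at which the required equality holds.
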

\begin{proof}
Since $f(z)=(f_1(z),\cdots,f_n(z)): P_m \rightarrow \mathbb{C}^{m}$ is a normalized biholomorphic convex mapping, it follows from T. J. Suffridge's decomposition theorem \cite{Su} that $f_l(z)=f_{l}\left(z^{l}\right)$ is a normalized biholomorphic convex function of one complex variable $z^{l}$ on the unit disc $\Delta_l$ for $l=1, \cdots, m$. Thus we have	
\begin{equation}
	F_{t,k}^2\left(0; f_{\ast}(z) v\right)
	=\frac{1}{1+t}\left\{\sum_{l=1}^{m}|f_{l}'(z^l)v^l|^2+t\sqrt[k]{\sum_{l=1}^{m}|f_{l}'(z^l)v^l|^{2k}}
	\right\},\quad  \forall z \in P_m, v \in T_z^{1,0}P_m\cong\mathbb{C}^m.	
	\label{disto1}
\end{equation}
By the result of K. Lowner, we have
\begin{eqnarray}
\frac{1}{(1+|z^l|)^2}\leq |f_l'(z^l)|\leq \frac{1}{(1-|z^l|)^2},\quad l=1, \cdots, m.
\label{f_l'(z^l)}
\end{eqnarray}
Now it follows from  \eqref{disto1} and \eqref{f_l'(z^l)} that
\begin{eqnarray}
F_{t,k}^2\left(0; f_{\ast}(z)v\right)
&\leq& \frac{1}{1+t}\left\{\sum_{l=1}^{m} \frac{|v^l|^2}{(1-|z^l|)^4}+t\sqrt[k]{\sum_{l=1}^{m} \frac{|v^l|^{2k}}{(1-|z^l|)^{4k}}}\right\}\label{disto2}
\end{eqnarray}
and
\begin{eqnarray}
F_{t,k}^2\left(0; f_{\ast}(z) v\right)
&\geq& \frac{1}{1+t}\left\{\sum_{l=1}^{m} \frac{|v^l|^2}{(1+|z^l|)^4}+t\sqrt[k]{\sum_{l=1}^{m} \frac{|v^l|^{2k}}{(1+|z^l|)^{4k}}}\right\}.
\label{disto2'}
\end{eqnarray}
Since
\begin{eqnarray} \frac{1}{(1-|z^l|)^4}=\frac{1}{(1-|z^l|^2)^2}\left[\frac{1+|z^l|}{1-|z^l|}\right]^2 \leq\frac{1}{(1-|z^l|^2)^2}\left[\frac{1+p(z)}{1-p(z)}\right]^2 \label{3'},
\end{eqnarray}
which together with \eqref{disto2} implies
\begin{eqnarray*}
F_{t,k}^2\left(0; f_{\ast}(z)v\right)
&\leq& \frac{1}{1+t}\left[\frac{1+p(z)}{1-p(z)}\right]^2\left\{\sum_{l=1}^{m} \frac{|v^l|^2}{(1-|z^l|^2)^2}+t\sqrt[k]{\sum_{l=1}^{m} \frac{|v^l|^{2k}}{(1-|z^l|^2)^{2k}}}\right\}\nonumber\\
&=&\left[\frac{1+p(z)}{1-p(z)}\right]^2F_{t,k}^2\left(z;v\right).
\end{eqnarray*}
Similarly, since
\begin{eqnarray} \frac{1}{(1+|z^l|)^4}=\frac{1}{(1-|z^l|^2)^2}\left[\frac{1-|z^l|}{1+|z^l|}\right]^2\geq\frac{1}{(1-|z^l|^2)^2}\left[\frac{1-p(z)}{1+p(z)}\right]^2, \label{4'}
\end{eqnarray}
which together with  \eqref{disto2'} implies
\begin{eqnarray*}
F_{t,k}^2\left(0; f_{\ast}(z) v\right)
&\geq& \frac{1}{1+t}\left[\frac{1-p(z)}{1+p(z)}\right]^2\left\{\sum_{l=1}^{m} \frac{|v^l|^2}{(1-|z^l|^2)^2}+t\sqrt[k]{\sum_{l=1}^{m} \frac{|v^l|^{2k}}{(1-|z^l|^2)^{2k}}}\right\}\nonumber\\
&=& \left[\frac{1-p(z)}{1+p(z)}\right]^2F_{t,k}^2\left(z;v\right).
\end{eqnarray*}
Thus \eqref{disto} is proved. Moreover, if one of the equalities holds at some point $z\neq0$ in \eqref{disto}, then
\begin{eqnarray*}
\frac{1}{(1+|z^l|)^2}= |f_l'(z^l)|\label{|f_l'(z^l)|1}\quad \mbox{or}\quad
\frac{1}{(1-|z^l|)^2}=|f_l'(z^l)|, \quad l=1,\cdots, m.\label{|f_l'(z^l)|}
\end{eqnarray*}
By the result of K. Lowner, we have
$f_l(z^l)=\frac{z^l}{1-e^{i\theta_l}z^l}$, where $\theta_l\in \mathbb{R}$ for  $l=1,\cdots,m$. On the other hand, if $f_l(z^l)=\frac{z^l}{1-e^{i\theta_l}z^l}$ with $\theta_l\in \mathbb{R}$ for $l=1,\cdots,m$, and if we take $z_0=(be^{-i\theta_1},\cdots,be^{-i\theta_m})$ with $b\in \mathbb{R^+}$, then $|f_l'(z_0^l)|^2= \frac{1}{(1-b)^4}$ and \eqref{3'}  reduces to
 \begin{eqnarray}
 \frac{1}{(1-b)^4}= \frac{1}{(1-b^2)^2}\left[\frac{1+b}{1-b}\right]^2= \frac{1}{(1-|z_0^l|^2)^2}\left[\frac{1+p(z_0)}{1-p(z_0)}\right]^2.\label{|f_l'(z_0^l)|}
 \end{eqnarray}
Hence
\begin{equation*}
	 F_{t,k}^2\left(0, f_{\ast}(z_0)v\right)
	=\left[\frac{1+p(z_0)}{1-p(z_0)}\right]^2F_{t,k}^{2}(z_0;v).
\end{equation*}
Similarly, for $\tilde{ z}_0=(-ae^{-i\theta_1},\cdots,-ae^{-i\theta_m})$ with $a\in \mathbb{R^+}$,  we have
\begin{equation*}
 F_{t,k}^2\left(0, f_{\ast}(\tilde{ z}_0) v\right)=\left[\frac{1-p(\tilde{ z}_0)}{1+p(\tilde{ z}_0)}\right]^2F_{t,k}^{2}(\tilde{ z}_0;v).
\end{equation*}
This completes the proof.
\end{proof}

\begin{corollary}\label{dis-corollary}
Let $P_m$ be the unit polydisk in $\mathbb{C}^m$ endowed with the $\mbox{Aut}(P_m)$-invariant K\"ahler-Berwald metric $F_{t,k}$ which is given by \eqref{Ftk}, and $f(z): P_m \rightarrow \mathbb{C}^{m}$ with $f_l(z)=f_{l}\left(z^{l}\right)$ being a normalized biholomorphic convex function of the complex variable $z^{l}$ on the unit disc $\Delta_l(l=1,\cdots,m).$	
Then
\begin{equation}
		\frac{p(z)^2}{(1+p(z))^4}
		\leq F_{t,k}^2\left(0; f_{\ast}(z)z\right)
		\leq \dfrac{m+t\sqrt[k]{m}}{1+t}\frac{p(z)^2}{(1-p(z))^4}\label{disto-c}
\end{equation}
for every $z \in P_m.$ Moreover, the right hand side equality holds at some $z_0$ if and only if $z_0=(be^{i\theta_1}, \cdots, be^{i\theta_m})$ with $b\in \mathbb{R^+}$ and $f_l(z^l)=\frac{z^l}{1-e^{i\theta_l}z^l}$ for some $\theta_l\in \mathbb{R}, l=1,\cdots,m.$
\end{corollary}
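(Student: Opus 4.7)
The plan is to derive the corollary by specializing Theorem~\ref{thdis} to $v=z$ and then bounding the resulting quantity $F_{t,k}^2(z;z)$ in terms of the Minkowski functional $p(z)=\max_{1\leq l\leq m}|z^l|$. First, by T.~J.~Suffridge's decomposition theorem (recalled in Remark~\ref{bi-con}), the diagonal mapping $f(z)=(f_1(z^1),\ldots,f_m(z^m))$ is itself a normalized biholomorphic convex mapping on $P_m$, so Theorem~\ref{thdis} applies with $v=z$ and yields
\begin{equation*}
\left[\frac{1-p(z)}{1+p(z)}\right]^2 F_{t,k}^2(z;z)\leq F_{t,k}^2(0;f_\ast(z)z)\leq\left[\frac{1+p(z)}{1-p(z)}\right]^2 F_{t,k}^2(z;z).
\end{equation*}

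The second step is to sandwich $F_{t,k}^2(z;z)$ between $\dfrac{p(z)^2}{(1-p(z)^2)^2}$ and $\dfrac{m+t\sqrt[k]{m}}{1+t}\cdot\dfrac{p(z)^2}{(1-p(z)^2)^2}$. Since $x\mapsto x/(1-x)^2$ is strictly increasing on $[0,1)$ (and the same holds for $x^k/(1-x)^{2k}$), and $|z^l|\leq p(z)$ for every $l$, every summand in the definition of $F_{t,k}^2(z;z)$ is dominated by its value at $|z^l|=p(z)$; collecting the common factor $\dfrac{p(z)^2}{(1-p(z)^2)^2}$ from the $m$-term Hermitian sum and from the $k$-th root of the $m$-term sum, I obtain the upper bound. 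For the lower bound I pick an index $l_0$ with $|z^{l_0}|=p(z)$ and keep only the $l_0$-term in both sums. Plugging these two estimates into the sandwich above and simplifying via $(1-p(z)^2)^2=(1-p(z))^2(1+p(z))^2$ yields \eqref{disto-c}.

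For the equality assertion, suppose the upper bound in \eqref{disto-c} is achieved at some $z_0\neq 0$. The saturation must propagate through both intermediate steps. Equality in the componentwise bound $\frac{|z_0^l|^2}{(1-|z_0^l|^2)^2}\leq\frac{p(z_0)^2}{(1-p(z_0)^2)^2}$ for every $l$ forces $|z_0^l|=p(z_0)=:b$, hence $z_0=(be^{i\theta_1},\ldots,be^{i\theta_m})$. Equality in the upper estimate of Theorem~\ref{thdis} then forces, via K.~L\"owner's characterization of the extremal in the classical one-variable distortion theorem, each $f_l$ to be a Koebe-type function $\frac{z^l}{1-e^{i\theta_l}z^l}$ with phase matched to $\theta_l$. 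Sufficiency is a direct substitution: for such $z_0$ and $f_l$ one verifies $|f_l'(z_0^l)|^2|z_0^l|^2=\dfrac{b^2}{(1-b)^4}$ for every $l$, so both sides of the right-hand inequality in \eqref{disto-c} evaluate to $\dfrac{m+t\sqrt[k]{m}}{1+t}\cdot\dfrac{b^2}{(1-b)^4}$.

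The main (small) obstacle is the phase bookkeeping in the equality analysis: one must check that the angles $\theta_l$ appearing in $z_0$ and in the Koebe extremals $f_l$ indeed match under a consistent sign convention, so that the chain of equalities closes simultaneously. The intermediate distortion inequalities themselves reduce to elementary monotonicity estimates and the algebraic identity $(1-p^2)^2=(1-p)^2(1+p)^2$, so there is no deep technical step beyond assembling Theorem~\ref{thdis} with these monotonicity bounds.
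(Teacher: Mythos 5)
Your proposal is correct and follows essentially the same route as the paper: apply Theorem \ref{thdis} with $v=z$, sandwich $F_{t,k}^2(z;z)$ between $\frac{p(z)^2}{(1-p(z)^2)^2}$ and $\frac{m+t\sqrt[k]{m}}{1+t}\frac{p(z)^2}{(1-p(z)^2)^2}$ by elementary monotonicity, and reduce the equality case to simultaneous saturation of these two inequalities, characterized respectively by Theorem \ref{thdis} (L\"owner extremals) and by all coordinates of $z_0$ having equal modulus. The phase-matching caveat you flag is genuine but minor and is present in the paper as well (its sufficiency check in Theorem \ref{thdis} is carried out at $z_0=(be^{-i\theta_1},\cdots,be^{-i\theta_m})$), so your argument coincides with the paper's proof.
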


\begin{proof}
According to the definition of the $\mbox{Aut}(P_m)$-invariant K\"ahler-Berwald metric $F_{t,k}$ given by \eqref{Ftk}, we have
\begin{equation}
\dfrac{p(z)^2}{(1-p(z)^2)^2}\leq F_{t,k}^2\left(z; z\right)\leq \dfrac{m+t\sqrt[k]{m}}{1+t}\dfrac{p(z)^2}{(1-p(z)^2)^2}, \quad \forall z \in P_m.\label{pftk}
\end{equation}
Thus by \eqref{pftk} and  replacing $v$ with $z$ in Theorem \ref{thdis}, we have
\begin{equation*}
\frac{p(z)^2}{(1+p(z))^4}
\leq F_{t,k}^2\left(0; f_{\ast}(z)z\right)
\leq \dfrac{m+t\sqrt[k]{m}}{1+t}\frac{p(z)^2}{(1-p(z))^4},
\end{equation*}
which completes the proof of \eqref{disto-c}. Moreover, the right hand side equality in \eqref{disto-c} holds at some point $z_0$ is equivalent to
\begin{eqnarray}
F_{t,k}^2\left(0; f_{\ast}(z_0) z_0\right)
&=&\left[\frac{1+p(z_0)}{1-p(z_0)}\right]^2F_{t,k}^{2}(z_0;z_0)\label{dis-eq3}
\end{eqnarray}
and
\begin{eqnarray}
F_{t,k}^2\left(z_0; z_0\right)&=&\dfrac{m+t\sqrt[k]{m}}{1+t}\dfrac{p(z_0)^2}{(1-p(z_0)^2)^2}.\label{dis-eq4}
\end{eqnarray}
Thus by Theorem \ref{thdis}, \eqref{dis-eq3} holds at $z_0$
if and only if $f_l(z^l)=\frac{z^l}{1-e^{i\theta_l}z^l}$ for some $\theta_l\in \mathbb{R}, l=1,\cdots,m.$ By \eqref{Ftk}, it follows that \eqref{dis-eq4} holds at $z_0$ if and only if $\frac{|z_0^i|}{1-|z_0^i|^2}=\frac{|z_0^j|}{1-|z_0^j|^2}$ for $i,j=1,\cdots,m$, if and only if $z_0=(be^{i\theta_1},\cdots,be^{i\theta_m})$ with $b\in \mathbb{R^+}$.
This completes the proof.
\end{proof}
\section{Complex Finsler-Einstein condition}

In this section, we show that the complex Finsler metric defined in \eqref{Ftk} is a complex Finsler-Einstein metric in the sense of T. Aikou \cite{Ai2}.

Let $M$ be a complex manifold of complex dimension $n$ and $g=g_{\alpha\bar{\beta}}dz^\alpha d\bar{z}^\beta$ be a Hermitian metric on $M$. Let $E$ be a holomorphic vector bundle of rank $r$ over $M$ whose fiber metric is a strongly pseudoconvex complex Finsler metric $F(z;\xi)$. Denote $R_{j\alpha\bar{\beta}}^i$ the curvature components of the partial connection (or equivalently the horizontal Chern-Finsler connection) associated to $F$. Define the partial mean curvature $K_j^i$ as follows \cite{Ai2}:
\begin{equation}
K_j^i:=g^{\bar{\beta}\alpha}R_{j\alpha\bar{\beta}}^i.\label{mc}
\end{equation}
In general, $R_{j\alpha\bar{\beta}}^i$ depend on both local coordinates $z$ of the base manifold $M$ and the fiber coordinates $\xi$, and it is easy to check that $K_j^i$ are actually well-defined on the projective bundle $PE$ of $E$.
In \cite{SZ2}, it was proved that if
$$K_j^i=\varphi(z,\xi)\delta_j^i$$
for some function $\varphi(z,\xi)$ on the projective bundle $PE$, then $\varphi(z,\xi)=\varphi(z)$.
Thus the following definition is natural.
\begin{definition}
A complex Finsler bundle $(E,F)$ is said to be a weakly Finsler-Einstein vector bundle if the partial mean curvature $K=(K_j^i)$ satisfies
$$
K_j^i=\varphi(z)\delta_j^i,
$$
for a function $\varphi$ on $M$. If the factor $\varphi$ is constant, $(E,F)$ is said to be a complex Finsler-Einstein vector bundle.
\end{definition}
It is easy to check that the above definition is equivalent to the Definition 3.1 in T. Aikou \cite{Ai2}. There are also other notions of complex Finsler-Einstein vector bundle, we refer to \cite{K-2} and some recent progress in \cite{FLW1,FLW2} along this line.

In the case that $E=T^{1,0}M$ is the holomorphic tangent bundle of $M$, it seems that Aikou's definition has a little awkward. Nevertheless, by Theorem 4.1 in \cite{Ai2}, a complex Berwald manifold $(M,F)$ is necessary a Hermitian manifold, more precisely, there exists a Hermitian metric $h_M$ on $M$ such that the partial connection associated to $F(z;v)$ is equal to the pull-back of the Hermitian connection of $h_M$. Thus one can always take $h_M$ instead of $g$  as definition in \eqref{mc},  which obviously makes sense for complex Berwald manifolds. Under this convention, a complex Berwald manifold $(M,F)$ is called a complex Finsler-Einstein manifold if it satisfies \eqref{mc}; a K\"ahler-Berwald manifold $(M,F)$ is called a K\"ahler Finsler-Einstein manifold if it satisfies \eqref{mc}. And if $(M,F)$ is a complex Finsler-Einstein manifold, then $(M,h_M)$ is necessary a Hermitian-Einstein manifold; if $(M,F)$ is a K\"ahler Finsler-Einstein manifold, then $(M,h_M)$ is necessary a K\"ahler-Einstein manifold. The following theorem shows that $(P_m,F_{t,k})$ is a K\"ahler Finsler-Einstein manifold for any fixed $t\in[0,+\infty)$ and integer $k\geq 2$, where $F_{t,k}$ is not necessary a Hermitian quadratic metric.

\begin{theorem}\label{Einstein}
Let $P_{m}$ be the  unit polydisc in $\mathbb{C}^{m}$ endowed with the $\mbox{Aut}(P_m)$-invariant K\"ahler-Berwald metric $F_{t, k}$ given by \eqref{Ftk}.
 Then for any fixed $t\in[0,+\infty)$ and integer $k\geq 2$, $(P_m,F_{t,k})$ is a complex Finsler-Einstein manifold with factor $\varphi=-2$.
\end{theorem}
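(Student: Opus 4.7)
The plan is to invoke Aikou's theorem to produce the associated Hermitian metric $h_M$ on $P_m$, identify it explicitly with the Bergman metric $g_B = \sum_{l=1}^m \frac{dz^l\, d\bar{z}^l}{(1-|z^l|^2)^2}$, and then compute the partial mean curvature directly from $g_B$ using its diagonal/product structure on $P_m$.

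First, since $F_{t,k}$ is a K\"ahler-Berwald metric (Theorem \ref{Z}), its horizontal Chern-Finsler connection coefficients $\Gamma^i_{j;l}(z)$ are independent of $v$. By Theorem 4.1 in \cite{Ai2}, these coefficients coincide with the Chern connection coefficients of a Hermitian metric $h_M$ on $P_m$. Since $F_{t,k}$ is $\mbox{Aut}(P_m)$-invariant, the same is true of $h_M$; combined with the fact recalled in Remark 1.2 that every $\mbox{Aut}(P_m)$-invariant Hermitian metric on $P_m$ is a positive constant multiple of $g_B$, this shows $h_M$ must be of the form $c \, g_B$. To pin down $c=1$, one may compute $\Gamma^i_{j;l}$ directly from the formulas $\Gamma^i_{;l} = G^{\bar{s}i} \partial^2 G/(\partial \bar{v}^s \partial z^l)$ and $\Gamma^i_{j;l} = \partial \Gamma^i_{;l}/\partial v^j$ with $G = F_{t,k}^2$; thanks to the Berwald property one may evaluate at any convenient $v$ (for instance $v = e_l$), and the product structure of the denominators $(1-|z^l|^2)^{-2}$ produces exactly the same connection coefficients as $g_B$.

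Second, because $g_B$ is diagonal, a direct computation gives
\[
\Gamma^i_{j;l} = \delta^i_j\,\delta^i_l \cdot \frac{2\bar{z}^i}{1-|z^i|^2},
\qquad
R^i_{j\alpha\bar{\beta}} = -\frac{\partial \Gamma^i_{j;\alpha}}{\partial \bar{z}^\beta} = -\delta^i_j\,\delta^i_\alpha\,\delta^i_\beta \cdot \frac{2}{(1-|z^i|^2)^2},
\]
with all other components vanishing. Contracting with $(g_B)^{\bar{\beta}\alpha} = \delta^{\alpha\beta}(1-|z^\alpha|^2)^2$ immediately yields the partial mean curvature
\[
K^i_j \;=\; (g_B)^{\bar{\beta}\alpha}\, R^i_{j\alpha\bar{\beta}}
\;=\; \delta^i_j \,(1-|z^i|^2)^2 \cdot \left(-\frac{2}{(1-|z^i|^2)^2}\right)
\;=\; -2\,\delta^i_j,
\]
so that $(P_m, F_{t,k})$ is K\"ahler Finsler-Einstein with constant factor $\varphi = -2$.

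The main obstacle is the first step: justifying that the Hermitian metric produced by Aikou's construction is literally the Bergman metric $g_B$, not merely a positive constant multiple of it. The symmetry argument only determines $h_M$ up to scalar, so one really has to verify the normalization by extracting $\Gamma^i_{j;l}$ from $F_{t,k}$ itself. The bookkeeping in that calculation looks intimidating for $t>0$ since $F_{t,k}$ is non-Hermitian-quadratic, but the Berwald property makes it tractable: one is free to specialize the auxiliary vector $v$, and in any specialization the term $t\sqrt[k]{\cdots}$ contributes only through $z$-derivatives of the factors $(1-|z^l|^2)^{-2k}$, which after the $G^{\bar{s}i}$-contraction collapse to the same diagonal Bergman form, confirming $h_M = g_B$ and thereby closing the argument.
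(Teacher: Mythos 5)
Your computational core is exactly the paper's: the horizontal Chern--Finsler connection of $F_{t,k}$ coincides with the Hermitian (Chern) connection of the Bergman metric, which is diagonal, so the only nonvanishing curvature components are $R^l_{ll\bar l}=-2(1-|z^l|^2)^{-2}$, and contracting with the Bergman metric gives $K^i_j=-2\delta^i_j$; those formulas are correct. The difference is how you justify the connection identification, and there your argument has a genuine flaw: you propose to ``pin down $c=1$'' in $h_M=c\,g_B$ by extracting $\Gamma^i_{j;l}$ from $F_{t,k}$, but the Chern connection of a Hermitian metric is unchanged under $h\mapsto c\,h$ (indeed under $g_B\mapsto\sum_l c_l\,g_l$ with any positive constants, because of the product structure), so no computation of connection coefficients can ever determine the constant. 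Fortunately the constant is irrelevant to what you actually need: connection and curvature do not see it, and the factor $-2$ comes from contracting with the Bergman metric itself, which is precisely the convention the paper adopts in \eqref{mc} for Berwald manifolds (it takes $g$ to be the Bergman metric and verifies that its Hermitian connection is the partial connection of $F_{t,k}$). So you should delete the normalization step rather than try to repair it, and state explicitly that the mean curvature is computed with respect to the Bergman metric.

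The second, more substantive gap is that the identification of the Chern--Finsler connection of $F_{t,k}$ with the Bergman connection --- the heart of the proof --- is only promised, not performed. The paper does not derive it from Aikou's Theorem 4.1 at all; it quotes the explicit computation from \cite{Zh2}, which shows $\varGamma^{s}_{l;i}=\hat{\varGamma}^{l}_{l;l}\,\delta^s_l\delta^i_l$ with $\hat{\varGamma}^{l}_{l;l}=2\bar z^l/(1-|z^l|^2)$. Your alternative route via Aikou's theorem needs the additional claim that the Hermitian metric $h_M$ it produces is $\mbox{Aut}(P_m)$-invariant; that does not follow merely from invariance of $F_{t,k}$ unless you know $h_M$ is canonically (functorially) constructed from $F$ or unique in a suitable sense, since any metric $\sum_l c_l g_l$ has the same connection. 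Either supply that argument, or (simpler, and closer to the paper) compute $\varGamma^i_{;l}=G^{\bar s i}\partial^2G/\partial\bar v^s\partial z^l$ for $G=F_{t,k}^2$ directly --- the product structure makes both $\partial^2G/\partial\bar v^s\partial z^l$ and $(G_{i\bar s})$ block-respecting, so the contraction collapses to the diagonal Bergman coefficients --- or simply cite \cite{Zh2} as the paper does. With that step in place, your curvature and contraction computation finishes the proof exactly as in the paper.
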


\begin{proof}
Note that the Bergman metric
$$g=\sum_{l=1}^mg_l, \quad g_l=\frac{dz^ld\bar{z}^l}{(1-|z^l|^2)^2},$$
is a K\"ahler-Einstein metric on $P_m$, which is a Hermitian quadratic metric.

In \cite{Zh2}, the horizontal Chern-Finsler connection coefficients $\varGamma^{s}_{l;i}$ associated to
the complex Finsler metric $F_{t,k}$  are given by
\begin{equation*}
\varGamma^{s}_{l;i}
=\left\{
 \begin{array}{ll}
\hat{\varGamma}^{l}_{l;l}, &\mbox{if}\;l=i=s; \\
0, & \hbox{otherwise},
 \end{array}
 \right.
\end{equation*}
where $\hat{\varGamma}^{l}_{l;l}$ is the Hermitian connection coefficient of $g_{l}$ for $l=1,\cdots,m$, respectively. Thus  the curvature components $R^{{s}}_{{l}{i}\bar{r}}$ of the partial connection associated to $F_{t,k}$ are given by
\begin{equation*}
R^{{s}}_{{l}{i}\bar{r}}
=\left\{
 \begin{array}{ll}
\hat{R}^{{l}}_{{l}{l}\bar{l}}, &\mbox{if}\;i=r=s=l; \\
0, & \hbox{otherwise},
 \end{array}
 \right.
\end{equation*}
where $\hat{R}^{{l}}_{{l}{l}\bar{l}}
=-\frac{\partial}{\partial \bar{z}^{l}}\hat{\varGamma}^{l}_{{l};l}
=\frac{-2}{(1-|z^{l}|^{2})^{2}}
$
are the curvature components of the Hermitian connection associated to  $g_{l}$. It follows that  the mean curvature $K=(K^{s}_{l})$ of $F_{t,k}$ with respect to $g$ are given by
\begin{eqnarray*}
&&K^{s}_{l}
=g^{\bar{r}{i}}R^{s}_{l{i}\bar{r}}=-2\delta^{s}_l,
\end{eqnarray*}
which implies that  $(P_m,F_{t,k})$ is a complex Finsler-Einstein  manifold with factor $\varphi=-2$.

This completes the proof of Theorem \ref{Einstein}.
\end{proof}

\vskip0.4cm
{\bf Acknowledgement:}\ {\small This work is supported by the National Natural Science Foundation of China (Grant No. 12071386, 11671330)}.


\begin{thebibliography}{99}
\setlength{\itemsep}{0pt} \setlength{\parskip}{0pt}
\bibitem{AP} M. Abate and G. Patrizio, Finsler metrics-A global approach with applications to geometric function theory, Lecture Notes in Mathematics, Volume 1591, Springer-Verlag, Berlin Aeidelberg, 1994.
\bibitem{Ah}L. V. Ahlfors, An extension of Schwarz's lemma. Trans. Amer. Math. Soc. \textbf{43}(3), 359-364 (1938).

\bibitem{A-1} T. Aikou, On complex Finsler manifolds. Rep. Fac. Sci., Kagoshima Univ. (Math., Phys. \text { \& } Chem). (24), 9-25 (1991).
\bibitem{Ai2} T. Aikou, Einstein-Finsler vector bundles. Publ. Math. Debrecen, \textbf{51}(3-4), 363-384 (1997).

\bibitem{BM} S. Bochner and W. T. Martin. Several complex variales. Princeton University Press, 1948.

\bibitem{BK}D. M. Burns, S. G. Krantz, Rigidity of holomorphic mappings and a new Schwarz lemma at the boundary. J. Amer. Math. Soc. \textbf{7}(3), 661-676 (1994).

\bibitem{Cartan}H. Cartan, Sur la possibilit\'e d'\'entendre aux fonctions de plusieurs variables complexes la th\'eorie des fonctions univalents, Lecons sur les Fonctions Univalents on Mutivalents, by P. Montel, Gauthier-Villar, 129-155 (1933).

\bibitem{CS} B. Chen and Y.  Shen, K\"ahler Finsler metrics are actually strongly K\"ahler. Chinese Ann. Math. \textbf{30B}(2), 173-178 (2009).



\bibitem{CCL}Z. H.  Chen, S. Y. Cheng and Q. K. Lu, On the Schwarz lemma for complete K\"ahler manifolds. Sci. Sinica. \textbf{22}(11), 1238-1247 (1979).

\bibitem{Chern}S. S. Chern, On holomorphic mappings of Hermitian manifolds of the same dimension. Proc. Sympos. Pure Math., Amer. Math. Soc. Providence, R.I., 157-170 (1968).

\bibitem{FLW1}H. T. Feng, K. F. Liu and X. Y. Wan, Chern forms of holomorphic Finsler vector bundles and some applications. Internat. J. Math. \textbf{27}(4), 1650030, 22 pp (2016).
\bibitem{FLW2}H. T. Feng, K. F. Liu and X. Y. Wan, A Donaldson type functional on a holomorphic Finsler vector bundle. Math. Ann. \textbf{369}(3-4), 997-1019 (2017).

\bibitem{Gong-Liu} S. Gong and T. S. Liu, Distortion theorems for biholomorphic convex mappings on bounded convex circular domains. Chinese Ann. Math. \textbf{20B}(3), 297-304 (1999).

\bibitem{Gong-W-Y} S. Gong, S. K. Wang and Q. H. Yu, Biholomorphic convex mappings of ball in $\mathbf{C}^{n}$. Pacific J. Math. \textbf{161}, 287-306 (1993).


\bibitem{KL} K. T. Kim and H. J. Lee, Schwarz's lemma  from a differential geometric viewpoint. IISc Lecture Notes Series-Vol. 2, World Scientific Publishing Co. Pte. Ltd., (2011).

\bibitem{K-1}S. Kobayashi, Distance, holomorphic mappings and the Schwarz lemma. J. Math. Soc.
Japan. \textbf{19}, 481-485 (1967).

 \bibitem{K-2}S. Kobayashi, Negative vector bundles and complex Finsler structures. Nagoya Math. J. \textbf{57}, 153-166 (1975).	

\bibitem{LWT}T. S. Liu, J. F.  Wang and X. M. Tang, Schwarz lemma at the boundary of the unit ball in $\mathbb{C}^n$ and its applications. J. Geom. Anal. \textbf{25}(3), 1890-1914 (2015).
\bibitem{LT}T. S. Liu and X. M. Tang, Schwarz lemma at the boundary of strongly pseudoconvex domain in $\mathbb{C}^n$. Math. Ann. \textbf{366}(1-2), 655-666 (2016).

\bibitem{LCP}Y. Liu, Z. H. Chen and Y. F. Pan, Boundary Schwarz lemma for nonequidimensional holomorphic mappings and its application. Pacific J. Math. \textbf{295}(2), 463-476 (2018).


\bibitem{Look1} K. H. Look, Schwarz lemma in the theory of functions of several complex variables (Chinese). Acta Math. Sinica. \textbf{7}, 370-420 (1957).

\bibitem{Look2} K. H. Look, Schwarz lemma and analytic invariants. Sci. Sinica. \textbf{7}, 453-504 (1958).


\bibitem{Na} R. Narasimhan, Several complex variables. Chicago Lectures in Mathematics, University of Chicago Press, 1971.

\bibitem{NZ-1} J. Nie and  C.  Zhong, Schwarz lemma from a K\"ahler manifold into a complex Finsler manifold. Sci. China Math. \textbf{65}(8), 1661-1678 (2022).

\bibitem{NZ-2} J. Nie and C.  Zhong, A Schwarz lemma for weakly K\"ahler-Finsler manifolds. Ann. Mat. Pura Appl. \textbf{201}(4), 1935-1964 (2022).


\bibitem{Royden} H. L. Royden, The Ahlfors-Schwarz lemma in several complex variables. Comment. Math. Helv. \textbf{55}(4), 547-558 (1980).

\bibitem{SS} B. Shen and Y. Shen, Schwarz lemma and Hartogs phenomenon in complex Finsler manifold. Chinese Ann. Math. \textbf{34B}(3), 455-460 (2013).
\bibitem{Shi}J. H. Shi, Foundations of Several Complex Variables. Higher Education Press, 2014.
\bibitem{Su} T. J. Suffridge, The principle of subordination applied to functions of several variables. Pacific J. Math. \textbf{33}, 241-248 (1970).

\bibitem{SZ2}L.  Sun and C.  Zhong, Weakly complex Einstein-Finsler vector bundle. Sci. China Math. \textbf{61}(6), 1079-1088 (2018).
	
\bibitem{TLZ} X. M. Tang, T. S. Liu and W. J. Zhang, Schwarz lemma at the boundary on the classical domain of type $II$. J. Geom. Anal. \text{28}(2), 1610-1634 (2018).

\bibitem{Tosatti} V. Tosatti, A general Schwarz Lemma for almost-Hermitian manifolds. Comm. Anal. Geom. \textbf{15}(5), 1063-1086 (2007).

\bibitem{Wan} X.  Wan, Holomorphic sectional curvature of complex Finsler manifolds. J. Geom. Anal. \textbf{29}(1), 194-216 (2019).

\bibitem{WLT}J. F. Wang, T. S. Liu and X. M. Tang, Schwarz lemma at the boundary on the classical domain of type $IV$. Pacific J. Math. \textbf{302}(1), 309-333 (2019).

\bibitem{Yau} S. T. Yau, A general Schwarz lemma for K\"ahler manifolds. Amer. J. Math. \textbf{100}(1), 197-203 (1978).

\bibitem{YC} H. C. Yang and Z. H. Chen, On the Schwarz lemma for complete Hermitian manifolds. Several complex variables (Hangzhou, 1981), Birkh\"auser Boston, Boston, MA, 99-116 (1984).

\bibitem{Zh0} C.  Zhong, On real and complex Berwald connections associated to strongly convex weakly K\"ahler-Finsler metric. Differential Geom. Appl. \textbf{29}(3), 388-408 (2011).

\bibitem{Zh1} C.  Zhong, On unitary invariant strongly pseudoconvex complex Finsler metrics. Differential Geom. Appl. \textbf{40}, 159-186 (2015).

\bibitem{Zh2}C.  Zhong, De Rahm decomposition theorem for strongly convex K\"ahler-Berwald  manifolds. Reuslts Math. https://doi.org/10.1007/s00025-022-01797-z.




\end{thebibliography}
\end{document}